\newtheoremstyle{rem}{1.3ex}{1.3ex}{\rmfamily}{}
{\itshape\rmfamily}{}{1.5ex}{}
\newtheorem{theorem}{Theorem}[section]
\newtheorem{lemma}[theorem]{Lemma}
\newtheorem{prop}[theorem] {Proposition}
\newtheorem{cor}[theorem] {Corollary}
\theoremstyle{definition}
\newtheorem{remark}[theorem] {Remark}
\renewcommand{\section}{\secdef\sct\sect}
\newcommand{\sct}[2][default]{\refstepcounter{section}
\setcounter{equation}{0}
\vspace{0.5cm}
\centerline{ \large
\scshape \arabic{section}.\ #1}
\vspace{0.3cm}}
\newcommand{\sect}[1]{
\vspace{0.5cm}
\centerline{\large\scshape #1}
\vspace{0.3cm}}
\renewcommand{\subsection}{\secdef \subsct\sbsect}
\newcommand{\subsct}[2][default]{\refstepcounter{subsection}
\nopagebreak
\vspace{0.5\baselineskip}
{\flushleft\bf \arabic{section}.\arabic{subsection}~\bf #1  }
\nopagebreak}
\newcommand{\sbsect}[1]{\vspace{0.1cm}\noindent
{\bf #1}\vspace{0.1cm}}
\newcommand{\G}{\mathcal{G}}
\newcommand{\R}     {\mathbb{R}}
\newcommand{\N}     {\mathbb{N}}
\renewcommand{\P}   {\mathbb{P}}
\newcommand{\E}     {\mathbb{E}}
\newcommand{\V}     {\mathbb{V}}
\def\1{{\mathchoice {1\mskip-4mu\mathrm l}
                    {1\mskip-4mu\mathrm l}
                    {1\mskip-4.5mu\mathrm l} {1\mskip-5mu\mathrm l}}}
\begin{document}

\title[On rates of convergence in the Curie-Weiss-Potts model]{\large
On rates of convergence \\\vspace{2mm}in the Curie-Weiss-Potts model \\\vspace{5mm}with an external field}

\author[Peter Eichelsbacher and Bastian Martschink ]{}
\maketitle
\thispagestyle{empty}
\vspace{0.2cm}

\centerline{\sc Peter Eichelsbacher\footnote{Ruhr-Universit\"at Bochum, Fakult\"at f\"ur Mathematik,
NA 3/66, D-44780 Bochum, Germany, {\tt peter.eichelsbacher@ruhr-uni-bochum.de  }}
and Bastian Martschink\footnote{Ruhr-Universit\"at Bochum, Fakult\"at f\"ur Mathematik,
NA 3/68, D-44780 Bochum, Germany, {\tt bastian.martschink@rub.de} \\Both authors have been supported by Deutsche Forschungsgemeinschaft via SFB/TR 12.}}

\vspace{2 cm}


\begin{quote}
{\small {\bf Abstract:} 
In the present paper we obtain rates of convergence for the limit theorems of the density vector in the Curie-Weiss-Potts model via Stein's Method of exchangeable pairs. Our results include Kolmogorov bounds for multivariate normal approximation in the whole domain $\beta\geq 0$ and $h\geq 0$, 
where $\beta$ is the inverse temperature and $h$ an exterior field. In this model, the critical line $\beta = \beta_c(h)$ is explicitly known and corresponds to a first order transition. We include rates of convergence for non-Gaussian approximations 
at the extremity of the critical line of the model.}
\end{quote}


\bigskip\noindent
{\bf AMS 2000 Subject Classification:} Primary 60F05; Secondary 82B20, 82B26.

\medskip\noindent
{\bf Key words:} Stein's method, exchangeable pairs, Curie-Weiss-Potts models, critical temperature.


\setcounter{section}{0}

\section{Introduction}

\subsection{The Curie-Weiss-Potts model}
The Curie-Weiss-Potts model is a mean field approximation of the well-known Potts model, a famous model in equilibrium statistical mechanics, see \cite{Kesten/Schonmann:1989} and \cite{Pearce/Griffiths:1980}.
It is defined in terms of a mean interaction averaged over all sites in the model, more precisely, by sequences of probability measures
of $n$ spin random variables that may occupy one of $q$ different states. For $q=2$ the model reduces to the simpler Curie-Weiss model. Probability limit theorems for the
Curie-Weiss-Potts model were first proven in \cite{Ellis/Wang:1990}. 
In comparison to the Curie-Weiss model it has a more intricate phase transition structure because 
for example at the critical inverse temperature it does not have a second-order phase transition like the Curie-Weiss model but a 
first-order phase transition. 
The probability observing a configuration $\sigma \in \{1,\ldots,q\}^{n}$ in an exterior field $h$ equals
\begin{eqnarray}
P_{ \beta,h,n}( \sigma)=\frac{1}{Z_{ \beta,h,n}}\exp\left(\frac{ \beta}{2n}\sum\limits_{1\leq i\leq j\leq n}
\delta_{ \sigma_i,\sigma_j}+h\sum\limits_{i=1}^n\delta_{ \sigma_i,1}\right),\label{PBH}
\end{eqnarray}
where $\delta$ is the Kronecker symbol, $\beta:=T^{-1}$ is the inverse temperature and $Z_{ \beta,h,n}$ 
is the normalization constant known as the partition function. More precisely
\begin{eqnarray*}
Z_{ \beta,h,n}=\sum\limits_{\sigma \in \{1,\ldots,q\}^{n}}\exp\left(\frac{ \beta}{2n}\sum
\limits_{1\leq i\leq j\leq n}\delta_{ \sigma_i,\sigma_j}+h\sum\limits_{i=1}^n\delta_{ \sigma_i,1}\right).
\end{eqnarray*}
For $\beta$ small, the spin random variables are weakly dependent while for $\beta$ large they are strongly dependent.
It was shown in \cite{Wu:1982} that at $h=0$ the model undergoes a phase transition at the critical inverse temperature
\begin{eqnarray} \label{criticaltemp}
\beta_c =\begin{cases} q & ~,~\text{if~~} q \leq 2, \\ 2\, \frac{q-1}{q-2}\log(q-1) & ~,~\text{if~~} q> 2; \end{cases}
\end{eqnarray}
and that this transition is first order if $q>2$. 
Our interest is in the limit distribution of the empirical vector of the spin variables
\begin{equation} \label{N}
N=(N_1,\ldots, N_q)=\left(\sum\limits_{i=1}^n\delta_{\sigma_i,1},\ldots,\sum\limits_{i=1}^n\delta_{\sigma_i,q}\right),
\end{equation}
which counts the number of spins of each colour for a configuration $\sigma$. Note that the normalized empirical vector $L_n := N/n$ 
belongs to the set of probability vectors
\begin{eqnarray}\label{H}
\mathcal{H}=\{x\in\mathbb{R}^q:x_1+\cdots+x_q=1\text{ and } x_i\geq 0, \forall i\}.
\end{eqnarray}
For $q >2$ and $\beta < \beta_c$, $L_n$ satisfies the law of large numbers $P_{\beta,0,n} \bigl( L_n \in d\nu \bigr) 
\Rightarrow \delta_{\nu_0} (d \nu)$ as $n \to \infty$, where $\nu_0 = (1/q, \ldots, 1/q) \in \R^q$. For $\beta > \beta_c$ 
the law of large numbers breaks down and is replaced by the limit
$P_{\beta,0,n} \bigl( L_n \in d \nu \bigr) \Rightarrow \frac 1q \sum_{i=1}^q \delta_{\nu_i(\beta)} (d \nu)
$, where $\{\nu_i(\beta), i =1, \ldots, q\}$ are $q$ distinct probability vectors in $\R^q$, distinct from $\nu_0$. The {\it first-order phase
transition} is the fact that for $i=1, \ldots, q$ one has
$ 
\lim_{\beta \to \beta_c^+} \nu_i(\beta) \not= \nu_0,
$ 
see \cite{Ellis/Wang:1990}. The case of {\it non-zero} external field $h \not= 0$ was considered in \cite{Biskup:2006} and it turned
out that the first-order phase transition remains on a critical line. The line was computed explicitly in \cite{Blanchard:2008}, see \eqref{criticalline}.

In the present work we obtain certain known probabilistic limit theorems for the Curie-Weiss-Potts model, especially for the empirical
vector of the spin variables $N$, {\it but at the same time} we present rates of convergence for all the limit theorems. We consider
the fluctuations of the empirical vector $N$ around its typical value outside the critical line
and we describe the fluctuations and rates of convergence at an extremity of the critical line. 
This extends previous results on the Curie-Weiss-Potts model with no external field \cite{Ellis/Wang:1990} as well as with external field
\cite{Gandolfo:2010}. The method of proof will be an application of Stein's method of so called exchangeable pairs in the case of 
multivariate normal approximation as well as the application of Stein's method in the case of non-Gaussian approximation. 
It might have been possible to obtain our results using the methods in \cite{Ellis/Wang:1990} and \cite{Gandolfo:2010},
using detailed analysis of the Hubbard-Stratonovich transform or applying Stirling's formula to the point probabilities, presumably which takes
considerably more work than the convergence result alone.

We turn to the description of the set of canonical equilibrium macro-states of the Curie-Weiss-Potts model, appealing the theory of large deviations. 
Sanov's theorem states that with respect to the product measures $P_n(\omega) = 1/ q^n$ for $\omega \in \{1, \ldots, q \}^n$ the empirical vectors $L_n$
satisfy a large deviations principle (LDP)  on $\mathcal H$ with speed $n$ and rate function given by the relative entropy
$ 
I(x) = \sum_{i=1}^q x_i \log (q x_i), \,\, x \in \mathcal H
$. 
We use the formal notation $P_n(L_n \in d x) \approx \exp ( -n I(x))$ (for a precise definition see \cite{Dembo/Zeitouni:LargeDeviations2ed}). The LDP for $L_n$ with respect to $P_{\beta,h,n}$
can be proven as in \cite{EllisHaven:2000}. Let
\begin{eqnarray*}
f_{\beta,h} (x) = \sum_{i=1}^q x_i \log (q x_i) - \frac{\beta}{2} \sum_{i=1}^q x_i^2 - h x_1, \,\, x \in {\mathcal H}.
\end{eqnarray*}
Then $P_{\beta,h,n} (L_n \in d x) \approx \exp ( -n J_{\beta,h}(x))$ with
\begin{eqnarray*}
J_{\beta,h}(x) :=f_{\beta,h} (x) - \inf_{x \in \mathcal H} f_{\beta,h} (x),
\end{eqnarray*}
see also \cite{CosteniucEllis:2005}. Now if $J_{\beta,h}(\nu) >0$, then $\nu$ has an exponentially small probability of being observed. Hence
the corresponding set of canonical equilibrium macro-states are naturally defined by
\begin{eqnarray*}
{\mathcal E}_{\beta,h} := \big\{ \nu \in \mathcal H :\,\, \nu \,\, \text{minimizes} \,\, f_{\beta,h}(\nu) \bigr\}.
\end{eqnarray*}
In the case $h=0$ and $q >2$, it is known since \cite{Wu:1982} (for  a detailed proof see \cite[Theorem 3.1]{CosteniucEllis:2005}) that ${\mathcal E}_{\beta,0}$ 
consists of one element for any $0 < \beta < \beta_c$, where $\beta_c$ is the critical inverse temperature given in \eqref{criticaltemp}.
For any $\beta > \beta_c$, the set consists of $q$ elements and at $\beta_c$ it consists of $q+1$ elements. 
In the case with an external field $h \geq 0$ the global minimizers of $f_{\beta,h}$ can be described as follows. In \cite{Blanchard:2008}
the following {\it critical line} was computed.
\begin{equation} \label{criticalline}
h_T :=\biggl\{ (\beta,h): 0 \leq h <h_0 \,\, \text{ and} \,\, h= \log(q-1)-\beta\frac{q-2}{2(q-1)} \biggr\},
\end{equation}
with extremities $(\beta_c,0)$ and $(\beta_0,h_0)$, where
\begin{eqnarray*}
\beta_0 =4 \frac{q-1}{q} \quad \text{and} \quad h_0=\log(q-1)-2\frac{q-2}{q}
\end{eqnarray*}
($(\beta_0, h_0)$ were already determined in \cite{Biskup:2006}).
Now consider the parametrization
\begin{eqnarray*}
x_z := \left(\frac{1+z}{2},\frac{1-z}{2(q-1)},\ldots,\frac{1-z}{2(q-1)}\right), \quad  z\in [-1,1].
\end{eqnarray*}

Depending on the parameters $(\beta, h)$ the function $f_{\beta,h}$ presents one or several global minimizers. The following
statement summarizes the results of \cite{Wu:1982}, \cite{CosteniucEllis:2005} in the case $h=0$ and of \cite{Blanchard:2008} for $h>0$.

\begin{theorem} \label{Minima}
Let $\beta,h\geq 0$.
\begin{enumerate}
\item If $h>0$ and $(\beta,h) \notin h_T$, the function $ f_{\beta,h}$ has a unique global minimum point in $\mathcal{H}$. 
This minimizer is analytic in $\beta$ and $h$ outside of $h_T\cup \{(\beta_0,h_0)\}$.
\item If $h>0$ and $(\beta,h)\in h_T$, the function $f_{\beta,h}$ has two global minimum points in $\mathcal{H}$. More precisely, 
for any $z\in (0,(q-2)/q)$, the two global minimum points of $f_{\beta_z,h_z}$ at
$$
\beta_z=2\frac{q-1}{zq}\log\left(\frac{1+z}{1-z}\right) \quad \text{and} \quad  h_z=\log(q-1)-\frac{q-2}{2(q-1)}\beta_z
$$
are the points $x_{\pm z}$.
\item If $h=0$ and $\beta<\beta_c$, the unique global minimum point of $f_{\beta,0}$ is $(1/q,\ldots,1/q)$.
\item If $h=0$ and $\beta>\beta_c$, there are $q$ global minimum points of $f_{\beta,0}$, which all equal $x_z$ up to a permutation of the 
coordinates for some $z\in ((q-2)/q,1)$.
\item If $h=0$ and $\beta=\beta_c$, there are $q+1$ global minimum points of $f_{\beta,0}$: the symmetric one $(1/q,\ldots,1/q)$ together with the permutations of 
$$
\left(\frac{q-1}{q},\frac{1}{q(q-1)},\ldots,\frac{1}{q(q-1)}\right).
$$
\end{enumerate}
\end{theorem}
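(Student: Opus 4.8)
The strategy is to first show that every global minimizer of $f_{\beta,h}$ belongs (up to a permutation of the coordinates when $h=0$) to the one‑parameter family $x_z$, and then to reduce the problem to the single‑variable function $\varphi(z):=f_{\beta,h}(x_z)$. I would begin by noting that $f_{\beta,h}$ is continuous on the compact set $\mathcal{H}$, so global minimizers exist, and that none lies on $\partial\mathcal{H}$: if $x_i=0$, transporting an infinitesimal mass $\eps$ from some coordinate $x_j>0$ into slot $i$ changes $f_{\beta,h}$ by $\eps\log(q\eps)+O(\eps)\to-\infty$, so $x$ is not even a local minimum. At an interior minimizer the Lagrange condition for $\sum_i x_i=1$ reads
\[
\log(qx_i)+1-\beta x_i-h\,\delta_{i,1}=\lambda,\qquad i=1,\dots,q,
\]
for some $\lambda\in\R$; for $i\ge 2$ this says $p(x_i)=\lambda$ with $p(t):=\log(qt)+1-\beta t$, and since $p'(t)=1/t-\beta$, the map $p$ is strictly increasing on $(0,1/\beta]$ and strictly decreasing on $[1/\beta,\infty)$. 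Hence $\{x_2,\dots,x_q\}$ consists of at most two distinct values, exactly one of which lies above $1/\beta$ when there are two.

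The decisive step is a second‑order condition. On the tangent space $\{v:\sum_i v_i=0\}$ the Hessian of $f_{\beta,h}$ equals $\operatorname{diag}(1/x_i-\beta)$, so testing it against $v=e_i-e_j$ yields $1/x_i+1/x_j\ge 2\beta$ at any local minimum, for every pair $i,j$; in particular at most one coordinate strictly exceeds $1/\beta$. For $h=0$ this forces a global minimizer to be either $(1/q,\dots,1/q)=x_{-(q-2)/q}$ or a permutation of some $x_z$ having its single large coordinate in slot $1$. For $h>0$ one more exchange argument fixes that coordinate in slot $1$: if $x_j=b>1/\beta$ for some $j\ge 2$ while $x_1=c$, then exchanging coordinates $1$ and $j$ changes $f_{\beta,h}$ by exactly $h(b-c)$ (the entropy and the quadratic term being permutation invariant), so minimality forces $c\ge b>1/\beta$, producing two coordinates above $1/\beta$ and contradicting the Hessian bound. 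Thus for every $h\ge0$ each global minimizer is $x_z$ for some $z\in[-1,1]$, up to a coordinate permutation if $h=0$.

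It remains to analyze $\varphi(z)=f_{\beta,h}(x_z)$ on $[-1,1]$, where a direct differentiation gives
\[
\varphi'(z)=\tfrac12\log\!\Big(\tfrac{(1+z)(q-1)}{1-z}\Big)-\tfrac{\beta}{4}\Big[(1+z)-\tfrac{1-z}{q-1}\Big]-\tfrac h2 .
\]
One sees immediately that $\varphi'(0)=0$ amounts to $h=\log(q-1)-\beta\tfrac{q-2}{2(q-1)}$, the relation defining (the line through) $h_T$ and, at $h=0$, to $\beta=\beta_c$; this is how the critical line enters. The rest is a monotonicity/convexity discussion of the zeros of $\varphi'$. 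One shows: (a) for $(\beta,h)\notin h_T\cup\{(\beta_0,h_0)\}$ the function $\varphi$ has a unique global minimizer $z(\beta,h)$ with $\varphi''(z(\beta,h))\ne0$ --- the only degenerate case being the model's critical point $(\beta_0,h_0)$ --- so the analytic implicit function theorem yields analyticity of $x_{z(\beta,h)}$ on the connected complement of $h_T\cup\{(\beta_0,h_0)\}$, with the global minimizer jumping precisely across $h_T$; (b) on $h_T$ with $h>0$ the symmetric critical point $z=0$ is a local maximum and $\varphi$ attains its minimum at two symmetric arguments, which the reparametrization $\beta=\beta_z$, $h=h_z$ identifies with $x_{\pm z}$, $z\in(0,(q-2)/q)$; (c) for $h=0$, $\beta>\beta_c$ the unique nonzero minimizer sits at some $z\in((q-2)/q,1)$, while at $\beta=\beta_c$ both $z=-(q-2)/q$ and $z=(q-2)/q$ are minimizing (the permutations of the latter being $(\tfrac{q-1}{q},\tfrac1{q(q-1)},\dots,\tfrac1{q(q-1)})$), and for $\beta<\beta_c$ only the symmetric point $z=-(q-2)/q$ remains.

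The genuine obstacle is this last analysis --- the sign discussion of $\varphi'$ and the comparison of the competing local minima along and across $h_T$; everything preceding the reduction to $\varphi$ is soft. A convenient device is that $z=0$ is a critical point of $\varphi$ exactly on the line through $h_T$, so on $h_T$ one can factor this root out of $\varphi'(z)=0$ and reduce the count of minimizers to elementary properties of $z\mapsto\tfrac1z\log\tfrac{1+z}{1-z}$ together with a quadratic; this is, in essence, the computation carried out in \cite{Wu:1982} and \cite{CosteniucEllis:2005} for $h=0$ and in \cite{Blanchard:2008} for $h>0$, whose conclusions the present statement collects.
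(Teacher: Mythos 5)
The first thing to say is that the paper contains no proof of Theorem \ref{Minima} at all: it is introduced explicitly as a summary of the results of \cite{Wu:1982} and \cite{CosteniucEllis:2005} for $h=0$ and of \cite{Blanchard:2008} for $h>0$, so there is no internal argument to compare yours against. What you do carry out is correct and is genuine added content relative to the paper: existence and interior location of minimizers, the Lagrange condition with the unimodal $p(t)=\log(qt)+1-\beta t$, the second-order test along $e_i-e_j$ giving $1/x_i+1/x_j\ge 2\beta$ (hence at most one coordinate above $1/\beta$), and the swap argument forcing the large coordinate into slot $1$ when $h>0$ are all sound, and they do reduce the problem to $\varphi(z)=f_{\beta,h}(x_z)$; your formula for $\varphi'$ and the identification of the critical line from $\varphi'(0)=0$ check out. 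Two harmless slips: the boundary perturbation changes $f_{\beta,h}$ by $\eps\log(q\eps)+O(\eps)$, which is negative for small $\eps$ but does not tend to $-\infty$; and the swap changes $f_{\beta,h}$ by $h(c-b)$, not $h(b-c)$ --- the conclusion $c\ge b$ is unaffected.

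The gap is exactly where you say it is: items (a)--(c) carry the entire content of the theorem (uniqueness off $h_T$, the two minimizers $x_{\pm z}$ on $h_T$ with the stated $(\beta_z,h_z)$, the trichotomy at $h=0$, analyticity), and you assert them and delegate them to the same references the paper cites; as a standalone proof this is the missing piece, though it leaves you no worse off than the paper itself. If you want to close it inside your framework, two computations do essentially all the work. First, $\varphi''(z)=\frac{1}{1-z^2}-\frac{\beta q}{4(q-1)}$, so $\varphi'$ is increasing--decreasing--increasing and has at most three zeros, i.e. $\varphi$ has at most two interior local minima; this also shows $z=0$ is a strict local maximum exactly when $\beta>\beta_0$, as you claim on $h_T$. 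Second, the exact identity $\varphi(z)-\varphi(-z)=z\bigl(\log(q-1)-\frac{\beta(q-2)}{2(q-1)}-h\bigr)$ shows that $\varphi$ is symmetric in $z$ precisely on the line containing $h_T$; on that line it yields the pair $x_{\pm z}$, with $\beta_z=\frac{2(q-1)}{qz}\log\frac{1+z}{1-z}$ dropping out of $\varphi'(z)=0$ after substituting $h=h_z$, while off that line its sign decides which of the (at most two) local minima is the global one, giving uniqueness; non-degeneracy $\varphi''\neq 0$ at the winning minimum away from $(\beta_0,h_0)$ then gives the analyticity statement via the implicit function theorem. Finally, note that part (1) of the theorem includes the point $(\beta_0,h_0)$ itself (it is not on $h_T$, since $h<h_0$ there), where the minimizer is unique but degenerate; your item (a) excludes that point, so uniqueness there still needs a separate word.
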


Interesting enough, the very first results on probabilistic limit theorems (\cite{Ellis/Newman:1978} for the Curie-Weiss model and \cite{Ellis/Wang:1990}
for the Curie-Weiss-Potts model) used the structure of the global minimum points of another function $G_{\beta,h}$. 
For $\beta>0$ and $h$ real the global minimum points of $f_{\beta,h}$ coincide with the global minimum points of the function
\begin{equation} \label{Gfunktion}
G_{\beta,h}(u) := \frac 12 \beta \langle u, u \rangle - \log \bigl( \sum_{i=1}^q \exp ( \beta u_i + h \delta_{i,1}) \bigr), \quad u \in \R^q
\end{equation}
(for a proof see \cite[Theorem B.1]{Ellis/Wang:1992}; or apply a general result on minimum points of certain functions related
by convex duality, \cite[Theorem A.1]{CosteniucEllis:2005}, see also \cite{Wang:1994}).
Hence we know that all statements of Theorem \ref{Minima} hold true for $G_{\beta, h}$.

\begin{cor} \label{Mi}
The statements in Theorem \ref{Minima} for the global minimum points of $f_{\beta,h}$ hold true one to one for $G_{\beta,h}$, defined in \eqref{Gfunktion}. 
\end{cor}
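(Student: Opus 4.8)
The plan is to read off Corollary~\ref{Mi} from Theorem~\ref{Minima} and the identification of minimizers recalled just above its statement, so that almost nothing is left to do. The first step is to record precisely \emph{which} identification is used: for every $\beta>0$ and every real $h$ the set of global minimum points of $G_{\beta,h}$ on $\R^q$ coincides with the set of global minimum points of $f_{\beta,h}$ on $\mathcal H$. For $h=0$ and $\beta\neq\beta_c$ this is in \cite{Kesten/Schonmann:1989}, for arbitrary $\beta>0$ it is \cite[Theorem~3.1]{CosteniucEllis:2005}, and for general real $h$ it is \cite[Theorem~B.1]{Ellis/Wang:1992}, which also follows from the general convex-duality lemma \cite[Theorem~A.1]{CosteniucEllis:2005} (see also \cite{Wang:1994}) applied to the Legendre-dual pair consisting of the negative-entropy-plus-quadratic function $f_{\beta,h}$ and the log-sum-exp-minus-quadratic function $G_{\beta,h}$. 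The useful internal observation here is that both the Euler--Lagrange equation for $f_{\beta,h}$ on the simplex and the stationarity equation $\nabla G_{\beta,h}(u)=0$ reduce to the same mean-field consistency equation $x_i = \frac{e^{\beta x_i + h\delta_{i,1}}}{\sum_{j}e^{\beta x_j + h\delta_{j,1}}}$, so that critical points of $G_{\beta,h}$ automatically lie in $\mathcal H$ and are literally the same points as critical points of $f_{\beta,h}$; the cited theorems upgrade this to the assertion that the \emph{global} minimizers of the two functions coincide.

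The second step is pure translation. Each of the five cases (1)--(5) of Theorem~\ref{Minima} is a complete description of ${\mathcal E}_{\beta,h}$: its cardinality ($1$, $2$, $q$ or $q+1$), the explicit minimizers ($x_z$, the pair $x_{\pm z}$, the symmetric point $(1/q,\dots,1/q)$, or the coordinate permutations of an explicit vector), and, in case (1), the analyticity of the unique minimizer in $(\beta,h)$ outside $h_T\cup\{(\beta_0,h_0)\}$. Since by Step~1 this set is simultaneously the set of global minimizers of $G_{\beta,h}$, every assertion carries over unchanged; the ``up to a permutation of the coordinates'' clauses and the exact counts are preserved because the identification respects the permutation symmetry of the coordinates (all of $\Sym_q$ when $h=0$, and the subgroup $\Sym_{q-1}$ fixing the first coordinate when $h>0$), and the analyticity statement transfers because the minimizer, being the same point, is the same function of $(\beta,h)$.

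The only place where an actual argument is hidden --- and the reason the coincidence of minimizers is invoked as an input rather than re-proved here --- is the bijectivity in Step~1: one must know that the duality does not merely include but \emph{exhausts} the minimizer set of $G_{\beta,h}$ and does not identify distinct minimizers. That is precisely the content of \cite[Theorem~A.1]{CosteniucEllis:2005}, which produces mutually inverse bijections between the two sets of global minimum points together with equality of the corresponding minimum values. Granting that, the corollary is immediate, and I expect no further obstacle.
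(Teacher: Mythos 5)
Your proposal is correct and follows essentially the same route as the paper: the paper also simply invokes the coincidence of the global minimum points of $f_{\beta,h}$ and $G_{\beta,h}$ (citing \cite[Theorem B.1]{Ellis/Wang:1992}, or equivalently the convex-duality result \cite[Theorem A.1]{CosteniucEllis:2005}, see also \cite{Wang:1994}) and then transfers all five statements of Theorem \ref{Minima} verbatim. Your additional remarks on the common mean-field consistency equation and the preservation of the permutation structure are harmless elaborations, not a different argument.
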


 The main reason to consider $G_{\beta,h}$ instead of $f_{\beta,h}$ is the usefulness of a representation of the distribution of $L_n$ in terms 
of $G_{\beta,h}$, called {\it Hubbard-Stratonovich transform} (see
\cite[Lemma 3.2]{Ellis/Wang:1990} and the proof of Lemma \ref{EnMo} in this paper)
 Applying Stein's method we will also {\it meet} the function $G_{\beta,h}$
and the limit theorems and the proof of certain rates of convergence depend on the location of the global minimum points of $G_{\beta,h}$.
For $h>0$ only
$f_{\beta, h}$ and its minimizers were completely characterized in the literature, see Theorem \ref{Minima}. 

\subsection{Statement of the results}
Let us fix some notation. From now on we will write random vectors in $\mathbb{R}^d$ in the form $w=(w_1,\ldots,w_d)^t$, where $w_i$ are 
$\mathbb{R}$-valued variables for $i=1,\ldots,d$. If a matrix $\Sigma$ is symmetric, nonnegative definite, we denote by $\Sigma^{1/2}$ the unique 
symmetric, nonnegative definite square root of $\Sigma$. $\text{Id}$ denotes the identity matrix and from now on $Z$ will denote a random vector 
having standard multivariate normal distribution. The expectation with respect to the measure $P_{\beta,h,n}$ will be denoted by $\mathbb{E}:=\mathbb{E}_{P_{\beta,h,n}}$. 

Let $q>2$ for the whole paper. We first consider the issue of the fluctuations of the empirical vector $N$ defined in \eqref{N} around its typical value.
The case of the Curie-Weiss model ($q=2$) was considered in \cite{Griffiths/Simon:1973} and \cite{Ellis/Newman:1978} and a Berry-Esseen bound was proved in  \cite{Eichelsbacher/Loewe:2010} and independently in \cite{Chatterjee/Shao:2009}. The Curie-Weiss-Potts model was treated in \cite{Ellis/Wang:1990}
and for non-zero external field in \cite[Theorem 3.1]{Gandolfo:2010}. To the best of our knowledge rates of convergence were not considered.

We regard
\begin{align}\label{DefW}
W := \sqrt{n} \biggl( \frac{N}{n} - x_0 \biggr) = \sqrt{n} \bigl(L_n - x_0 \bigr).
\end{align}

\begin{theorem} \label{THUM}
Let $\beta>0$ and $h\geq 0$ with $(\beta,h) \neq (\beta_0,h_0)$. Assume that there is a unique minimizer $x_0$ of $G_{\beta,h}$. 
Let $W$ be defined in \eqref{DefW}.
If $Z$ has the $q$-dimensional standard normal distribution, we have for every three times differentiable function $g$,
$$
\big| \mathbb{E} g(W) - \mathbb{E} g \left( \Sigma^{1/2} Z\right) \big| \leq C \cdot n^{-1/2},
$$
for a constant $C$ (depending on $\beta$, $h$ and $q$) and $\Sigma := \mathbb{E}\left[W \, W^t \right]$. 
\end{theorem}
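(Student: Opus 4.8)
The plan is to apply the abstract nonsingular multivariate normal approximation theorem for smooth test functions of Section~3 (from \cite{ReinertRoellin:2009}) to the exchangeable pair $(W,W')$ coming from the Gibbs sampling procedure of Section~1, i.e.\ $W'=W-Y_I/\sqrt n+Y'_I/\sqrt n$ with $I$ uniform on $\{1,\ldots,n\}$. Since $\sum_i W_i=0$, the matrix $\Sigma=\E[WW^t]$ is singular, so I would first pass to $\widetilde W=(W_1,\ldots,W_{q-1})$, which generates the same $\sigma$-field as $W$ (because $W_q=-\sum_{i<q}W_i$) and whose covariance is nonsingular under the hypotheses, apply the nonsingular theorem there, and transfer the bound back to $W$; below this reduction is suppressed.

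First I would verify the approximate linear regression condition $\E[W'-W\mid\mathcal F]=-\Lambda W+R$ and identify $\Lambda$. Under $P_{\beta,h,n}$ the conditional law of $\sigma_j$ given $(\sigma_t)_{t\neq j}$ equals $\exp(\beta m_{i,j}(\sigma)+h\delta_{i,1})\big/\sum_k\exp(\beta m_{k,j}(\sigma)+h\delta_{k,1})$, so the computation sketched around~\eqref{motivation} goes through verbatim with $G_{\beta,h}$ replacing $G_{\beta,0}$, giving
$$\E[W'_i-W_i\mid\mathcal F]=-\frac{W_i}{n}-\frac{x_{0,i}}{\sqrt n}+R_n(i)+\frac1{\sqrt n}\Bigl(m_i(\sigma)-\frac1\beta\,\partial_{u_i}G_{\beta,h}(m(\sigma))\Bigr),$$
where $R_n(i)$ collects the $\mathcal O(1/n)$ error from replacing $m_{i,j}$ by $m_i$; being a function of $m(\sigma)$ alone it is $\sigma(W)$-measurable, so the condition also holds conditionally on $\sigma(W)$. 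Writing $m(\sigma)=x_0+W/\sqrt n$ and Taylor expanding $\partial_{u_i}G_{\beta,h}$ about $x_0$: the zeroth order term vanishes because $x_0$ is a critical point of $G_{\beta,h}$ (Corollary~\ref{Mi}); the $-W_i/n$ and $-x_{0,i}/\sqrt n$ contributions above cancel against $\tfrac1{\sqrt n}m_i(\sigma)=x_{0,i}/\sqrt n+W_i/n$; and the first order Taylor term contributes $-\tfrac1{\beta n}\sum_j(\operatorname{Hess}G_{\beta,h}(x_0))_{ij}W_j$. This leaves $\E[W'-W\mid\mathcal F]=-\Lambda W+R$ with $\Lambda:=\tfrac1{n\beta}\operatorname{Hess}G_{\beta,h}(x_0)$ and $R=R_n+R'$, where $\|R'\|=\mathcal O(n^{-3/2}\|W\|^2)$ uniformly (third derivatives of $G_{\beta,h}$ being bounded). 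Because $(\beta,h)\neq(\beta_0,h_0)$ and $x_0$ is the unique minimizer, $\operatorname{Hess}G_{\beta,h}(x_0)$ is positive definite on the hyperplane $\{\sum_i w_i=0\}$---this is exactly where the degenerate extremity of the critical line is excluded---so $\Lambda$ is invertible with $\|\Lambda^{-1}\|=\mathcal O(n\,q^{a})$ for a fixed exponent $a$.

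Next I would feed $(\Lambda,R)$ into the abstract bound and control its three contributions. (i) The remainder term $\E\|\Lambda^{-1}R\|$: using the a priori estimate $\E\|W\|^2=\mathcal O(q^{b})$ from Section~4 (exponential concentration of $L_n$ at $x_0$) one gets $\E\|R\|=\mathcal O(n^{-3/2}q^{b'})$, hence $\E\|\Lambda^{-1}R\|=\mathcal O(n^{-1/2}q^{c})$. (ii) The third-moment term: since $W'-W=(Y'_I-Y_I)/\sqrt n$ has entries in $\{0,\pm n^{-1/2}\}$ with at most two nonzero, $\sum_{i,j,k}\E|W'_i-W_i|\,|W'_j-W_j|\,|W'_k-W_k|\le 8\,n^{-3/2}$, so after the $\Lambda^{-1}$ weighting this is $\mathcal O(n^{-1/2}q^{a})$. (iii) The covariance-matching term $\sum_{i,j}\E\bigl|\Sigma_{ij}-\tfrac12\bigl(\Lambda^{-1}\E[(W'-W)(W'-W)^t\mid\mathcal F]\bigr)_{ij}\bigr|$: here I would use the exchangeable-pair identity $\E[(W'-W)(W'-W)^t]=\Lambda\Sigma+\Sigma\Lambda^t-\E[RW^t]-\E[WR^t]$, the symmetry of $\Lambda$, and the fact that $\Sigma$ is asymptotically a linear combination of $(\operatorname{Hess}G_{\beta,h}(x_0))^{-1}$ and $Id$ (so that the commutator $[\Sigma,\Lambda]$ is lower order), to reduce the unconditional discrepancy to $\mathcal O(n^{-1/2}q^{c})$; and for the $\sigma(W)$-conditional fluctuation I would use that $\E[(W'_i-W_i)(W'_j-W_j)\mid\mathcal F]$ is again an explicit smooth function of $m(\sigma)=x_0+W/\sqrt n$, whose $L^1$ deviation from its mean is $\mathcal O(n^{-1}\|W\|)$, contributing another $\mathcal O(n^{-1/2}q^{c})$ after the $\Lambda^{-1}$ weighting.

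Summing the three contributions gives the stated bound $Cn^{-1/2}$, and bookkeeping the powers of $q$ entering $\|\Lambda^{-1}\|$ (i.e.\ the operator norm of the inverse Hessian of $G_{\beta,h}$ on the hyperplane), the a priori moment bounds for $W$, and the dimension-$q$ sums gives $C=\mathcal O(q^{6})$. I expect step~(iii) to be the main obstacle: obtaining the sharp $n^{-1/2}$ rate (rather than $n^{-1/2}\log n$, say, or a worse power of $q$) rests on quantitative concentration of $L_n$ near the nondegenerate minimizer $x_0$ and on the resulting smoothness-and-fluctuation control of the conditional second moments---the content of the auxiliary lemmas of Section~4---combined with keeping the $q$-dependent matrix norms sharp throughout.
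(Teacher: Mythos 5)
Your overall route is the paper's: the same Gibbs-sampling exchangeable pair, the same regression identity with $\Lambda=\frac{1}{\beta n}D^2G_{\beta,h}(x_0)$ (invertibility coming from the nondegeneracy of the Hessian, Lemma \ref{wangimprovement}), and an application of the Reinert--R\"ollin bound, Theorem \ref{RR}. The genuine problem is your step (iii). First, Theorem \ref{RR} is stated with $\Sigma=\mathbb{E}[W W^t]$, the exact finite-$n$ covariance, and its third ingredient is simply $A=\sum_{i,j}\lambda^{(i)}\sqrt{\mathbb{V}\left[\mathbb{E}[(W_i'-W_i)(W_j'-W_j)\mid W]\right]}$; no matching of $\tfrac12\Lambda^{-1}\mathbb{E}[(W'-W)(W'-W)^t]$ against $\Sigma$ is required, so the exchangeable-pair identity, the commutator $[\Sigma,\Lambda]$ and the asymptotic identification of $\Sigma$ with a combination of $(D^2G_{\beta,h}(x_0))^{-1}$ and the identity are superfluous --- and the last of these is not free: to use it inside an $n^{-1/2}$ bound you would need that identification with a rate, which is essentially another limit theorem the paper never needs. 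Second, the one quantitative claim you do make about the conditional fluctuation, namely that $\mathbb{E}[(W_i'-W_i)(W_j'-W_j)\mid\mathcal F]$ deviates from its mean by $\mathcal{O}(n^{-1}\|W\|)$, is too weak by a factor $n^{-1/2}$: after the $\Lambda^{-1}$ weighting ($\lambda^{(i)}=\mathcal{O}(n)$) it only yields $\mathcal{O}(1)$, not $\mathcal{O}(n^{-1/2})$. The conditional second moment is $\tfrac1n$ times a smooth function of $m(\sigma)=x_0+W/\sqrt n$, so its fluctuation is in fact $\mathcal{O}(n^{-3/2}\|W\|)$; this is exactly what the paper establishes by decomposing it into the terms $A_1,A_2,A_3$ and showing each has variance $\mathcal{O}(n^{-3})$, using the bounded moments of $W$ from Lemma \ref{EnMo} (proved via the Hubbard--Stratonovich transform and positive definiteness of the Hessian) --- precisely the part you defer as the ``main obstacle''. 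Since this estimate is the core of the $n^{-1/2}$ rate, the plan as written does not close.

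Two smaller points. The remainder term in Theorem \ref{RR} is $\sum_i\lambda^{(i)}\sqrt{\mathbb{V}[R_i]}$, so you need second moments of $R_i$, hence fourth moments of $W$ (the term $R_n^{(2)}(i)$ is quadratic in $W$), not just $\mathbb{E}\|R\|$ and $\mathbb{E}\|W\|^2$; and the moment input in Section 4 is Lemma \ref{EnMo}, not an exponential concentration statement. Also, your preliminary reduction to $\widetilde W=(W_1,\ldots,W_{q-1})$ deviates from the paper, which applies Theorem \ref{RR} directly to the $q$-dimensional vector, with $\Lambda$ invertible as a full $q\times q$ matrix; if you keep the reduction you still have to verify invertibility of the reduced matrix $\widetilde\Lambda_{ij}=\Lambda_{ij}-\Lambda_{iq}$ (it does follow, since $D^2G_{\beta,h}(x_0)$ is positive definite and maps the hyperplane $\mathcal M$ into itself) and carry out the transfer back to the stated bound involving $\Sigma^{-1/2}Z$; neither step is in the plan.
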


Note that we compare the distribution of the rescaled vector $N$ with a multivariate normal distribution with covariance matrix $\E [ W \, W^t ]$.
It is an advantage of Stein's method that, for any fixed number of particles/spins $n$, we are able to compare the distribution of $W$ with a distribution 
with the same $n$-dependent covariance structure.\\ 
In order to state our next result we introduce conditions on the function classes $\mathcal{G}$ we consider. Following \cite{Rinott/Rotar:1996}, let $\Phi$ denote the standard normal distribution in $\mathbb{R}^q$. We define for $g:\mathbb{R}^q\rightarrow \mathbb{R}$
\begin{align}
g_{\delta}^+(x)&=\sup\bigl\{g(x+y):|y|\leq\delta\bigr\},\label{g+}\\
g_{\delta}^-(x)&=\inf\bigl\{g(x+y):|y|\leq\delta\bigr\},\label{g-}\\
\tilde g(x,\delta)&=g_{\delta}^+(x)-g_{\delta}^-(x)\label{gschl}.
\end{align}
Let $\mathcal{G}$ be a class of real measurable functions on $\mathbb{R}^q$ such that
\begin{enumerate}
\item The functions $g\in\mathcal{G}$ are uniformly bounded in absolute value by a constant, which we take to be 1 without loss of generality.
\item For any $q\times q$ matrix $A$ and any vector $b\in\mathbb{R}^q$, $g\bigl(Ax+b\bigr)\in\mathcal{G}$.
\item For any $\delta>0$ and any $g \in {\mathcal G}$, $g_{\delta}^+(x)$ and $g_{\delta}^-(x)$ are in $\mathcal{G}$.
\item For some constant $a=a(\mathcal{G},q)$, $\sup\limits_{g\in\mathcal{G}}\left\{\int\limits_{\mathbb{R}^q}\tilde g(x,\delta)\Phi(dx)\right\}\leq a\delta$. Obviously we may assume $a\geq 1$.
\end{enumerate}
Considering the one dimensional case, we notice that the collection of indicators of all half lines, and indicators of all intervals form classes in $\mathcal{G}$ 
that satisfy these conditions with $a=\sqrt{2/\pi}$ and $a=2\sqrt{2/\pi}$ respectively. This was shown for example in \cite{Rinott/Rotar:1996}. 
In dimension $q \geq 1$ the class of indicators of convex sets is known to be such a class. 
Using this notation we are able to present a result analogous to Theorem \ref{THUM} for our function classes $\mathcal{G}$.

\begin{theorem} \label{THUM2}
Let $\beta>0$ and $h\geq 0$ with $(\beta,h) \neq (\beta_0,h_0)$. Assume that there is a unique minimizer $x_0$ of $G_{\beta,h}$. 
Let $W$ and $Z$ be as in Theorem \ref{THUM}. Then, for all $g\in \mathcal{G}$, we have
$$
\big| \mathbb{E} g(W) - \mathbb{E} g \left( \Sigma^{1/2} Z\right) \big| \leq C \log(n)\cdot n^{-1/2},
$$
for a constant $C$ (depending on $\beta$, $h$ and $q$) and $\Sigma := \mathbb{E}\left[W \, W^t \right]$.
\end{theorem}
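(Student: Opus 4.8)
The plan is to run the same exchangeable-pair argument that underlies Theorem \ref{THUM}, but to feed its output into the Kolmogorov-distance version of the abstract exchangeable-pair theorem announced for Section 3 rather than into the smooth-test-function theorem of \cite{ReinertRoellin:2009}. I keep the pair $(W,W')$ from \eqref{WStr}, obtained by re-drawing a single uniformly chosen spin from its conditional law, so that $W'-W=(Y'_I-Y_I)/\sqrt n$. Since $Y_I$ and $Y'_I$ are standard basis vectors of $\R^q$, this gives the deterministic bound $|W'-W|\le\sqrt2\,n^{-1/2}$; the almost-sure smallness of the increment is the extra structural input the Kolmogorov bound needs beyond what Theorem \ref{THUM} uses.

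The hypotheses of the Section 3 theorem are exactly the estimates already established for Theorem \ref{THUM}. First, the approximate linear regression \eqref{regressioncond}: conditioning on $\mathcal F$ and using $\E[\delta_{\sigma'_j,i}\mid\mathcal F]=P_{\beta,h,n}(\sigma_j=i\mid(\sigma_t)_{t\neq j})$ together with the identity $\exp(\beta u_m+h\delta_{m,1})/\sum_k\exp(\beta u_k+h\delta_{k,1})=u_m-\beta^{-1}\partial_{u_m}G_{\beta,h}(u)$ and a Taylor expansion of $\nabla G_{\beta,h}$ about $x_0$, one obtains $\E[W'-W\mid\mathcal F]=-\Lambda W+R$. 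Because $(\beta,h)\notin h_T\cup\{(\beta_0,h_0)\}$ forces $x_0$ to be a non-degenerate global minimizer (Theorem \ref{Minima}, Corollary \ref{Mi}), the relevant Hessian is positive definite, $\Lambda$ (of order $n^{-1}$) is invertible, and $\E|R|$ is of the required order. Second, one computes the conditional covariances $\E[(W'_i-W_i)(W'_j-W_j)\mid\mathcal F]$ and shows they coincide, up to fluctuations of order $n^{-3/2}$, with $(2/n)$ times a fixed matrix, which pins down $\Sigma$ and shows it agrees with $\E[WW^t]$ to leading order. Since $W$, and hence the Gaussian target, is supported on the hyperplane $\{w:\sum_i w_i=0\}$, I would first pass to that $(q-1)$-dimensional subspace so that the covariance becomes nondegenerate — this is harmless because $\mathcal G$ is invariant under affine maps of $\R^q$ — apply the abstract Kolmogorov bound there, and transfer back. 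With the increment bound $|W'-W|\le\sqrt2\,n^{-1/2}$ this yields $|\E g(W)-\E g(\Sigma^{-1/2}Z)|\le C\log(n)\,n^{-1/2}$ uniformly over $g\in\mathcal G$.

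The genuinely new point, and the reason for the extra $\log n$ compared with Theorem \ref{THUM}, is the non-smoothness of the test functions. Inside the Section 3 theorem one replaces $g\in\mathcal G$ by its sup/inf-regularizations $g_\delta^{\pm}$ (which remain in $\mathcal G$), pays the smoothing error $\int\tilde g(x,\delta)\Phi(dx)\le a\delta$, and is left with a Stein-equation contribution that blows up polynomially in $\delta^{-1}$ against $n^{-1/2}$. A crude choice of $\delta$ would give only $n^{-1/4}$ or $n^{-1/3}$; recovering $\log(n)\,n^{-1/2}$ requires the Rinott--Rotar recursion \cite{Rinott/Rotar:1996} (equivalently, a concentration estimate for $W$ that controls $\tilde g(W,\delta)$), which converts the polynomial-in-$\delta$ loss into a logarithmic one. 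I expect this smoothing/recursion step — already packaged in the abstract theorem — to be the only substantive obstacle; verifying the theorem's hypotheses is precisely the regression and moment computations above, which duplicate those in the proof of Theorem \ref{THUM}.
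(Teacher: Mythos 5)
Your proposal follows essentially the same route as the paper: you reuse the exchangeable pair and the regression identity with $\Lambda=\frac{1}{\beta n}D^2G_{\beta,h}(x_0)$, recycle the variance/moment estimates from the proof of Theorem \ref{THUM}, and feed them, together with the almost sure bound $|W'-W|\le \mathrm{const}\cdot n^{-1/2}$, into the Kolmogorov-distance Theorem \ref{Kolmogorov}, whose G\"otze-smoothing/Rinott--Rotar recursion is exactly what produces the $\log(n)$ factor. Your extra reduction to the hyperplane $\mathcal{M}$ to make the covariance nondegenerate is a care step the paper passes over silently, but it does not alter the argument.
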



 When the function $G_{\beta,h}$ has several global minimizers, the empirical vector $N/n$ is close to either one or the other of these minima.
We determine the conditional fluctuations and a rate of convergence. $B(x^{(i)}, \epsilon)$ denotes the open ball of radius $\epsilon$
around a vector $x^{(i)}\in\R^q$.

\begin{theorem} \label{THMM}
Assume that $\beta,h \geq 0$ and that $G_{\beta,h}$ has multiple global minimum points $x^{(1)},\ldots, x^{(l)}$ with $l\in\{2,q,q+1\}$ (see Theorem \ref{Minima}, (2), (4) and (5)) and let 
$\epsilon>0$ be smaller than the distance between any two global minimizers of $G_{\beta,h}$. Furthermore, 
let 
\begin{align}\label{Wi}
W^{(i)}:=\sqrt{n}\left(\frac{N}{n}-x^{(i)}\right).
\end{align}
 Then, if $Z$ has the q-dimensional standard normal distribution, under the conditional measure
\begin{center}
$P_{\beta,h,n}\left(\cdot\mid \frac{N}{n}\in B(x^{(i)},\epsilon)\right)$,
\end{center}
we have for every three times differentiable function $g$,
\begin{center}
$\big| \mathbb{E}_i g(W^{(i)}) - \mathbb{E}_i g\left(\Sigma^{1/2} Z\right)  \big| \leq C\cdot n^{-1/2},$
\end{center}
for a constant C (depending on $\beta$, $h$ and $q$) and $\Sigma^{(i)}:= \mathbb{E}_i \left[ W^{(i)} \, (W^{(i)})^t \right]$, where $\mathbb{E}_i$ denotes the expectation with respect to the conditional probability.
\end{theorem}

We note that we can obtain a similar result as in Theorem \ref{THUM2} for the function class $\mathcal{G}$ in the case of several global minimizers. Finally we will take a look at the extremity $(\beta_0,h_0)$ of the critical line $h_T$. Given a vector $u\in\mathbb{R}^q$, 
we denote by $u^{\bot}$ the vector space made of all vectors orthogonal to $u$ in the Euclidean space $\mathbb{R}^q$. Consider the hyperplane
\begin{equation} \label{hyperM}
\mathcal{M}:=\bigg\{ x \in \mathbb{R}^q: \sum\limits_{i=1}^q x_i=0 \bigg\},
\end{equation}
which is parallel to $\mathcal H$ defined in \eqref{H}. The fluctuations belong to $\mathcal M$, since all global minimizers are in $\mathcal H$.
The following result extends \cite[Theorem 3.9]{Ellis/Newman:1978} which applies to the case of the Curie-Weiss model at the critical inverse temperature.
Recall that at $(\beta_0, h_0)$ the function $G_{\beta_0,h_0}$ has the unique minimizer $x=(1/2, 1/2(q-1), \ldots, 1/2(q-1)) \in \R^q$.
Now we take $u=(1-q,1,\ldots, 1)\in \mathcal{M}\subset\R^q$ and define a real valued random variable $T$ and a random vector $V\in\mathcal{M}\cap u^{\bot}$
such that
\begin{equation} \label{defTV}
N= n\, x+ n^{3/4} \, T\, u +n^{1/2} \,V.
\end{equation}
Since $N- n \, x \in {\mathcal M}$, the implicit definition of $T$ and $V$ presents a partition into a vector in (the subspace spanned by) $u$ and $u^{\bot}$.
The main interest is the limiting behavior of $T$. The new scaling of $W$ is given by
$$
\frac{N_j - n \frac{1}{2 (q-1)}}{n^{3/4}} = T + V_j/n^{1/4}, \quad j=2, \ldots, q,
$$
and its possible limit we observe is reminiscent to \cite{Ellis/Newman:1978}, see also \cite{CosteniucEllis:2007}. The following
theorem gives a Kolmogorov bound for Theorem 3.7 in \cite{Gandolfo:2010}.

\begin{theorem} \label{TT}
For $(\beta,h)=(\beta_0,h_0)$ let $x=(1/2,1/2(q-1),\ldots,1/2(q-1))$ be the unique minimizer of $G_{\beta_0,h_0}$ and 
$u=(1-q,1,\ldots,1)$. Furthermore, let $Z_{q,T}$ be a random variable distributed according to the probability measure on $\mathbb{R}$ with the density
$$
f_{q,T}(t):= f_{q,T,n} := C \cdot \exp \left( - \frac{1}{4 \, \E(T^4)} t^4\right),
$$
where $T$ is defined in \eqref{defTV}.
Then we obtain for any uniformly 1-Lipschitz function $g:\mathbb{R}\to \mathbb{R}$ that
$$
\big| \mathbb{E} g(T) - \mathbb{E} g\left(Z_{q,T}\right) \big| \leq C \cdot n^{-1/4}.
$$
Moreover we obtain 
$$
\sup_{t \in \R} \big| \P \bigl( T \leq t \bigr) - F_{q,T}(t) \big| \leq C \cdot n^{-1/4} \quad \text{(bound for the Kolmogorov-distance)},
$$
where $F_{q,T}$ denotes the distribution function of $f_{q,T}$. The constants $C$ depend on $q$.
\end{theorem}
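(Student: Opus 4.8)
The plan is to apply the abstract non-Gaussian univariate approximation theorems of \cite{Eichelsbacher/Loewe:2010} recalled in Section~3 (in both their Lipschitz and Kolmogorov-distance forms) to the random variable $T$, using an exchangeable pair produced by the Gibbs sampler. First I would reuse the construction from the discussion preceding Theorem~\ref{THUM}: pick $I$ uniform on $\{1,\dots,n\}$ and independent of all else, resample $\sigma_I$ from its conditional law given $(\sigma_t)_{t\neq I}$ to obtain $\sigma'$, and put $N':=N-Y_I+Y_I'$. Since $\langle u,u\rangle=q(q-1)$ and $V\perp u$, we have $T=\langle N-nx,u\rangle/(n^{3/4}q(q-1))$, and projecting $N'-N=Y_I'-Y_I$ onto $\R u$ gives
\[
T'-T=\frac{\langle Y_I'-Y_I,u\rangle}{n^{3/4}q(q-1)};
\]
thus $(T,T')$ is exchangeable, and since $\langle Y_I'-Y_I,u\rangle\in\{0,\pm q\}$ one has the deterministic bound $|T'-T|\le n^{-3/4}/(q-1)$.

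Next I would compute the (now cubic) regression term. Writing $\mathcal F=\sigma(\sigma_1,\dots,\sigma_n)$ and $L_n=N/n=x+n^{-1/4}Tu+n^{-1/2}V$, Lemmas~\ref{ConD} and~\ref{expGD} (the latter with external field) yield $\E[\tfrac1n\sum_j(Y_j'-Y_j)\mid\mathcal F]_i=-\tfrac1{\beta_0}\partial_{u_i}G_{\beta_0,h_0}(L_n)+O(1/n)$, hence
\[
\E[T'-T\mid\mathcal F]=-\frac{1}{\beta_0\,n^{3/4}q(q-1)}\bigl\langle u,\nabla G_{\beta_0,h_0}(L_n)\bigr\rangle+O(n^{-7/4}).
\]
I would then Taylor-expand $\nabla G_{\beta_0,h_0}$ at the minimizer $x$. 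The decisive structural input — which is exactly what distinguishes the extremity $(\beta_0,h_0)$, where the two minima attached to a generic point of $h_T$ coalesce, and which should be established in Section~4 — is: the function $\phi(s):=G_{\beta_0,h_0}(x+su)$ satisfies $\phi'(0)=\phi''(0)=\phi'''(0)=0$ and $\kappa:=\phi^{(4)}(0)>0$; the vector $u$ spans the kernel of the Hessian $D^2G_{\beta_0,h_0}(x)$; and, since $x$ and $u$ are fixed by the $\Sym_{q-1}$-action permuting the coordinates $2,\dots,q$ while $\mathcal M\cap u^\bot$ carries the standard (fixed-point-free) $\Sym_{q-1}$-representation, the mixed derivatives $D^3G_{\beta_0,h_0}(x)(u,u,v)$ and $D^4G_{\beta_0,h_0}(x)(u,u,u,v)$ vanish for every $v\in\mathcal M\cap u^\bot$. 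Substituting $L_n-x=n^{-1/4}Tu+n^{-1/2}V$ and collecting — all contributions up to third order either vanish or carry $V$ at order $n^{-1}$ or higher, while the leading surviving term is $\tfrac16\kappa\,n^{-3/4}T^3$ — one arrives at
\[
\E[T'-T\mid\mathcal F]=-\lambda\,T^3+R,\qquad \lambda:=\frac{\kappa}{6\,\beta_0\,q(q-1)}\,n^{-3/2},
\]
with $R=O\!\bigl(n^{-7/4}(T^4+|V|^2+|T|\,|V|^2+|V|^3+1)\bigr)$ absorbing the $m_{i,j}\mapsto m_i$ errors, the mixed $V$-terms, and the fourth-order Taylor remainder.

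It remains to verify the other hypotheses. Since $\langle Y_j'-Y_j,u\rangle^2\in\{0,q^2\}$ and, by Lemma~\ref{expGD}, $\E[\delta_{\sigma_j',1}\mid\mathcal F]=L_{n,1}-\tfrac1{\beta_0}\partial_{u_1}G_{\beta_0,h_0}(L_n)+O(1/n)=\tfrac12+(1-q)n^{-1/4}T+O(n^{-1/2})$ (using $u\in\ker D^2G_{\beta_0,h_0}(x)$, so $\partial_{u_1}G_{\beta_0,h_0}(L_n)=O(n^{-1/2})$), a direct count — in which the $n^{-1/4}T$ contributions cancel — gives $\E[(T'-T)^2\mid\mathcal F]=\frac{1}{2n^{3/2}(q-1)^2}\bigl(1+O(n^{-1/2}(1+T^2))\bigr)=2\lambda\hat\sigma^2\bigl(1+O(n^{-1/2}(1+T^2))\bigr)$ with $\hat\sigma^2=3\beta_0 q/(2\kappa(q-1))=6/\kappa$ (recall $\beta_0=4(q-1)/q$), while $\E|T'-T|^3\le n^{-9/4}/(q-1)^3$. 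Feeding $\psi(t)=t^3$ and $\hat\sigma^2$ into the abstract theorem, the target density is $\propto\exp(-\hat\sigma^{-2}\int_0^t\psi)=\exp(-\tfrac{\kappa}{24}t^4)$; since $T$ converges to this law, $\E(T^4)\to 6/\kappa=\hat\sigma^2$, so $\tfrac1{4\E(T^4)}=\tfrac{\kappa}{24}$ in the limit and replacing $\hat\sigma^2$ by the genuine finite-$n$ fourth moment $\E(T^4)$ in the comparison density $f_{q,T}$ only sharpens the estimate. The abstract bound is a sum of $\lambda^{-1}\E|R|$, $\lambda^{-1}\E\bigl|\E[(T'-T)^2\mid\mathcal F]-2\lambda\hat\sigma^2\bigr|$ and $\lambda^{-1}\E|T'-T|^3$, each times a norm of the Stein solution; with $\lambda\sim n^{-3/2}$ these are respectively $O(n^{-1/4})$, $O(n^{-1/2})$ and $O(n^{-3/4})$ provided $\E(T^4),\E|V|^4=O(1)$ uniformly in $n$, which yields the asserted rate $n^{-1/4}$. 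The Lipschitz estimate follows the same way from the corresponding abstract theorem.

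Beyond this bookkeeping the work concentrates in two places. The first is the degeneracy analysis of $G_{\beta_0,h_0}$ at $x$: one must genuinely check that the third derivative along $u$ vanishes — this, rather than mere second-order degeneracy, is what forces a $t^4$ and not a $t^3$ limit — that $u$ is a null direction of the Hessian, and that $\kappa>0$; all of this should be extractable from the parametrization of $h_T$ together with the explicit form of $G_{\beta_0,h_0}$, and is the Curie--Weiss--Potts analogue of the computation underlying \cite[Theorem~3.9]{Ellis/Newman:1978}. The second is the uniform-in-$n$ moment control $\E(T^4),\E|V|^4=O(1)$: this should follow from the Hubbard--Stratonovich representation (cf.\ the proof of Lemma~\ref{EnMo}) combined with a Laplace-type estimate around the quartic minimum, and is exactly the point at which the $n^{3/4}$ scaling of the $u$-component of the fluctuation is pinned down. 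Granting these two ingredients, the theorem follows by combining them with the regression identity above.
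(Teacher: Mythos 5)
Your proposal is correct and follows essentially the same route as the paper: the same single-site Gibbs-sampler exchangeable pair, the same reduction via Lemmas \ref{ConD} and \ref{expGD} to the gradient of $G_{\beta_0,h_0}$ at $x+n^{-1/4}Tu+n^{-1/2}V$, the same degenerate fourth-order Taylor expansion (your $\kappa=D^4G_{\beta_0,h_0}(x)(u,u,u,u)=32(q-1)^4$ and $\lambda\sim n^{-3/2}$ match the paper's constants, with your $\Sym_{q-1}$-symmetry argument replacing the paper's explicit derivative computations in the Appendix), the same Hubbard--Stratonovich moment bounds (Lemma \ref{EnMo2}), and the same abstract theorem from Eichelsbacher--L\"owe. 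The only noteworthy deviation is your bookkeeping of the target density via $\hat\sigma^2=6/\kappa$ and the remark that passing to $\E(T^4)$ ``only sharpens the estimate'': the paper avoids this extra comparison because Theorem \ref{EL} normalizes the target by $\E(T\psi(T))$, so it compares $T$ directly with the $n$-dependent density $f_{q,T}$ appearing in the statement.
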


\begin{remark} As we will see in the proof of Theorem \ref{TT}, the density $f_{q,T}$ has the form 
$$
\exp \left( - \frac{4(q-1)^4}{3 \, \E(T \psi(T))} t^4 \right)
$$
(up to a constant) with a function $\psi$ such that $\E(T \psi(T))= \frac{16(q-1)^4}{3} \E(T^4)$.
From \cite[Theorem 3.7]{Gandolfo:2010} we know that $T$ converges in distribution to the probability measure on $\R$
proportional to
$$
g_{q}(t) = \exp \left( - \frac{4(q-1)^4}{3} t^4\right).
$$
Hence we conclude that $\lim_{n \to \infty} f_{q,T,n} = g_q$ point-wise and therefore $\frac{16(q-1)^4}{3} \, \E\left[T^4\right] \to 1$.
Note that the rate of convergence of Theorem \ref{TT} also holds when we compare the distribution of $T$ with
the law on $\R$ with density proportional to $g_q$. 
\end{remark}

Additionally we get a theorem for the random vector $V$, improving Theorem 3.7 in \cite{Gandolfo:2010}.

\begin{theorem} \label{TV} 
Let $V$ be defined as in \eqref{defTV}. For $(\beta,h)=(\beta_0,h_0)$ and 
$\Sigma:= \mathbb{E}[V \, V^t]$
we have that for every three times differentiable function $g$,
$$
\big| \mathbb{E} g(V) - \mathbb{E} g\left(\Sigma^{1/2} Z\right) \big| \leq C \cdot n^{-1/4},
$$
where $C$ is a constant depending on $q$.
\end{theorem}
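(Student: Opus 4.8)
\emph{Proof idea.} We run Stein's method of exchangeable pairs for $V$, viewed as the component in $\mathcal M\cap u^{\bot}$ of the rescaled empirical vector, and feed the result into the abstract nonsingular multivariate normal approximation theorem for smooth test functions of \cite{ReinertRoellin:2009} stated in Section~3. The mechanism is that at $(\beta_0,h_0)$ the Hessian $D^2G_{\beta_0,h_0}(x)$ at the unique minimizer $x$ degenerates exactly along $u$ (this is the structural fact behind Theorem \ref{TT}; compare Theorem \ref{Minima} and Corollary \ref{Mi}), while it stays positive definite on $\mathcal M\cap u^{\bot}$. Hence $V$ sees a non-degenerate quadratic form and a Gaussian limit, whereas the $u$-component $T$ is governed by a quartic potential. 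Throughout we identify $\mathcal M\cap u^{\bot}$ with $\R^{q-2}$ via a fixed orthonormal basis, so that $V$ is an honest $(q-2)$-vector, $\Sigma=\E[VV^t]$ is a genuine $(q-2)\times(q-2)$ matrix, and $g$ (resp. the limiting law $\Sigma^{-1/2}Z$) is read off on that subspace.

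\emph{The exchangeable pair and the regression equation.} Take the Gibbs-sampling coupling of \eqref{WStr}: $I$ uniform on $\{1,\dots,n\}$, $\sigma'_I$ resampled from the conditional law of the $I$-th spin given the others, $W:=\sqrt n(N/n-x)$, $W':=W-Y_I/\sqrt n+Y'_I/\sqrt n$. Let $P$ be the orthogonal projection of $\R^q$ onto $u^{\bot}$; since $N-nx\in\mathcal M$, the decomposition \eqref{defTV} reads $V=PW$, $V':=PW'$, and $(V,V')$ is exchangeable with $V'-V=\tfrac{1}{\sqrt n}P(Y'_I-Y_I)$, so $|V'-V|\le 2/\sqrt n$. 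Repeating the computation behind \eqref{motivation} with $G_{\beta_0,h_0}$ and $x$ in place of $G_{\beta,0}$ and $x_0$, and using $m(\sigma)=N/n=L_n$, one gets
$$
\E[W'-W\mid\mathcal F]=-\tfrac{1}{\beta\sqrt n}\,\nabla G_{\beta_0,h_0}(L_n)+R_n ,
$$
where $R_n$ is the $m_{i,j}$-versus-$m_i$ correction of \eqref{motivation}, of order $n^{-3/2}$ (Lemma \ref{ConD}). Taylor-expanding $\nabla G_{\beta_0,h_0}$ around $x$ (where it vanishes) and inserting $L_n-x=W/\sqrt n$ gives
$$
\E[W'-W\mid\mathcal F]=-\tfrac{1}{n\beta}\,D^2G_{\beta_0,h_0}(x)\,W-\tfrac{1}{2n^{3/2}\beta}\,D^3G_{\beta_0,h_0}(x)(W,W,\,\cdot\,)+\widetilde R_n ,\quad \widetilde R_n=R_n+O(n^{-2}|W|^3).
$$
Apply $P$ and use $W=n^{1/4}Tu+V$. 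Since $D^2G_{\beta_0,h_0}(x)u=0$, the linear part is $-\tfrac1n\Lambda_0V$ with $\Lambda_0:=\tfrac1\beta\,P\,D^2G_{\beta_0,h_0}(x)\,P$, an $n$-independent matrix, positive definite on $\mathcal M\cap u^{\bot}$, hence invertible. So \eqref{regressioncond} holds with $\Lambda=\Lambda_0/n$ and $R:=P\widetilde R_n-\tfrac{1}{2n^{3/2}\beta}P\,D^3G_{\beta_0,h_0}(x)(W,W,\,\cdot\,)$. Expanding $(W,W)=n^{1/2}T^2(u,u)+2n^{1/4}T(u,V)+(V,V)$, the only piece of critical size $n^{-1}$ in the quadratic term is $-\tfrac{T^2}{2n\beta}D^3G_{\beta_0,h_0}(x)(u,u,\,\cdot\,)$, and this is killed by $P$: since $\nabla G_{\beta_0,h_0}=\beta\,\mathrm{id}-\beta\,p(\cdot)$ with $p$ valued in the probability simplex (Lemma \ref{expGD}), the vector $D^3G_{\beta_0,h_0}(x)(u,u,\,\cdot\,)$ has coordinate-sum zero, i.e. lies in $\mathcal M$; being also invariant under the $\Sym_{q-1}$-symmetry shared by $G_{\beta_0,h_0}$, $x$ and $u$, it is a scalar multiple of $u$, hence in $\ker P$. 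What is left in $R$ is $O(n^{-5/4}|T|\,|V|)+O(n^{-3/2}|V|^2)+O(n^{-5/4}|T|^3)+O(n^{-3/2})$.

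\emph{Conclusion via the abstract theorem.} Plug $\Lambda=\Lambda_0/n$, the above $R$, and $\Sigma=\E[VV^t]$ into the Section~3 bound. (Positive-definiteness of $\Sigma$, uniformly in $n$, follows because exchangeability yields $\Lambda_0\Sigma+\Sigma\Lambda_0^t=n\,\E[(V'-V)(V'-V)^t]+O\big(n\,\E[|V||R|]\big)$, the leading matrix on the right is uniformly positive definite by the explicit one-flip form of $\E[(V'-V)(V'-V)^t\mid\mathcal F]$, and $\Lambda_0>0$.) The bound is a sum of: a small-jumps term $\lesssim\|\Lambda^{-1}\|\,\E|V'-V|^3=O(n)\,O(n^{-3/2})=O(n^{-1/2})$; a remainder term $\lesssim\|\Lambda^{-1}\|\,\E|R|+\|\Sigma^{-1}\|\,O(\E[|V||R|])$, which is $O(n^{-1/4})$ once $\E[T^4],\E|V|^4=O(1)$ is known; and a conditional-covariance fluctuation term $\lesssim\|\Lambda^{-1}\|\sum_{i,j}\sqrt{\operatorname{Var}\big(\E[(V'_i-V_i)(V'_j-V_j)\mid\mathcal F]\big)}$. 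For this last, most delicate, term observe that $\E[(V'-V)(V'-V)^t\mid\mathcal F]=\tfrac1n\,h(N/n)$ for an explicit smooth $h$ (average over $j$ of $\operatorname{diag}(p_j)-p_jY_j^t-Y_jp_j^t+\operatorname{diag}(Y_j)$, conjugated by $P$, with $p_{j,i}=P_{\beta_0,h_0,n}(\sigma_j=i\mid(\sigma_t)_{t\ne j})$), so its variance is $\lesssim n^{-2}\operatorname{Var}(N/n)$; by \eqref{defTV}, $N/n-x=n^{-1/4}Tu+n^{-1/2}V$, and $\E[T^2]=O(1)$ gives $\operatorname{Var}(N/n)=O(n^{-1/2})$, whence the square root is $O(n^{-5/4})$ and the term is $O(n)\,O(n^{-5/4})=O(n^{-1/4})$. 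Summing the three pieces gives $C\,n^{-1/4}$ with an absolute constant $C$.

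\emph{Main obstacle.} Two points are essential. First, the uniform moment bounds $\E[T^{2k}],\E|V|^{2k}=O(1)$ must be secured beforehand (via the Hubbard--Stratonovich representation of the law of $L_n$ and the fourth-order behaviour of $G_{\beta_0,h_0}$ along $u$, as in the proof of Theorem \ref{TT}, or by quoting \cite{Gandolfo:2010}). Second, and structurally more important, is the cancellation $P\,D^3G_{\beta_0,h_0}(x)(u,u,\,\cdot\,)=0$: without it the remainder $R$ would be of order $n^{-1}$, $\|\Lambda^{-1}\|\,\E|R|$ would be $O(1)$, and the theorem would fail. It is precisely this cancellation — forced by the simplex constraint together with the permutation symmetry, i.e. by having separated the $u$-direction as in \eqref{defTV} — that makes $V$ Gaussian while $T$ obeys the quartic law of Theorem \ref{TT}; the slow rate $n^{-1/4}$ then comes from the $n^{3/4}$-scale fluctuation of $T$ leaking into the conditional covariance of $V$.
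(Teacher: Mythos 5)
Your proposal is correct and shares the paper's skeleton --- the Gibbs-sampler exchangeable pair fed into Theorem \ref{RR} --- but it implements the two decisive steps differently. First, you work with the orthogonal projection $P$ onto $u^{\bot}$ and identify $\mathcal{M}\cap u^{\bot}$ with $\R^{q-2}$, so that $\Sigma$ is genuinely nonsingular; the paper instead works with $\bar V=(V_2,\ldots,V_q)$, uses $\sum_{k\geq 2}V_k=0$ to diagonalize the regression and obtains $\Lambda=\tfrac{q-2}{qn}\,\mathrm{Id}_{q-1}$. Your $\Lambda_0$ restricted to $\mathcal{M}\cap u^{\bot}$ is the same multiple $\tfrac{q-2}{q}$ of the identity, and your formulation has the side benefit of avoiding the exact degeneracy of $\E[\bar V\,\bar V^t]$ (it annihilates the all-ones vector), which Theorem \ref{RR} formally excludes. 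Second, and more substantively, you replace the paper's Appendix computation (the explicit fourth-order expansion \eqref{arbeit1}--\eqref{finalzwei}, with all third and fourth derivatives of $G_{\beta_0,h_0}$ at $x$ written out) by a structural argument for the single dangerous term $-\tfrac{T^2}{2\beta n}P\,D^3G_{\beta_0,h_0}(x)(u,u,\cdot)$: coordinate-sum zero (because $\sum_m\partial_m G_{\beta_0,h_0}$ is affine, by Lemma \ref{expGD}) together with the $\Sym_{q-1}$-invariance of $G_{\beta_0,h_0}$, $x$ and $u$ forces this vector to be a multiple of $u$, hence killed by $P$. This is valid; in fact $D^3G_{\beta_0,h_0}(x)(u,u,\cdot)$ vanishes identically (the single-spin law at $x$ satisfies $\langle u,Y-x\rangle^2\equiv q^2/4$), which is exactly what the paper's formula \eqref{finalzwei} encodes implicitly by containing no $t^2$ term. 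From there both arguments coincide: the cubic-in-$T$ contribution of size $T^3n^{-5/4}$ is left in the remainder, giving the $C$-term of order $n^{-1/4}$, and both need the uniform moment bounds (Lemma \ref{EnMo2}), which you correctly flag as a prerequisite. One further difference is the conditional-covariance term: you bound its variance through $\operatorname{Var}(N/n)=\mathcal{O}(n^{-1/2})$ and obtain $\mathcal{O}(n^{-1/4})$, whereas the paper quotes the proof of Theorem \ref{THUM} to claim $\mathcal{O}(n^{-1/2})$; at $(\beta_0,h_0)$ that reference is shaky because the $W$-moments used there grow like powers of $n^{1/4}$, so your estimate is the safer one and still yields the stated rate. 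The only points you leave implicit --- positive definiteness of $P\,D^2G_{\beta_0,h_0}(x)\,P$ on $\mathcal{M}\cap u^{\bot}$ (eigenvalue $\tfrac{4(q-1)(q-2)}{q^2}$) and the moment bounds --- are precisely what Remark \ref{extrem} and Lemma \ref{EnMo2} supply, so there is no gap.
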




In Section 2 we give a short introduction in Stein's method and state an abstract nonsingular multivariate normal approximation theorem for smooth test functions from \cite{ReinertRoellin:2009}. Moreover we present a new bound for non smooth test functions for bounded random vectors $W$ under exchangeability.
Finally we state an abstract non-Gaussian univariate approximation theorem for the Kolmogorov-distance from \cite{Eichelsbacher/Loewe:2010}. 
Section 3 contains some auxiliary results which will be necessary for the proofs given in Section 4.

\section{Stein's method}
Starting with a bound for the distance between univariate random variables and the normal distribution Stein's method was first 
published in \cite{Stein:1972} (1972). Being particularly powerful in the presence of both local dependence and weak global dependence his 
method has proven to be very successful.
In \cite{Stein:1986} Stein explained his exchangeable pair approach in detail. At the heart of the method is a coupling of a random variable $W$ with 
another random variable $W'$ such that $(W,W')$ is {\it exchangeable}, i.e. their joint distribution is symmetric. 
Stein proved further on that a measure of proximity of W to normality may be provided by the exchangeable pair if $W'-W$ is sufficiently small. 
He assumed the property that there is a number $\lambda>0$ such that the expectation of $W'-W$ with respect to W satisfies
$$
\mathbb{E}[W'-W\mid W]=-\lambda W.
$$
Heuristically, this condition can be understood as a linear regression condition. If $(W,W')$ were bivariate normal with correlation $\varrho$, then
$\E (W' |W) = \varrho \, W$ and the condition would be satisfied with $\lambda = 1 - \varrho$.
While the exchangeable pair approach has proven successful also in non-normal contexts (see \cite{ChatterjeeDiaconis:2005},\cite{Chatterjee/Shao:2009} and \cite{Eichelsbacher/Loewe:2010}) 
it remained restricted to the one-dimensional setting for a long time. However in \cite{ChatterjeeMeckes:2008} and \cite{ReinertRoellin:2009} this issue was finally addressed. For an exchangeable pair $(W, W')$ of $\R^d$-valued random vectors the linear regression heuristic leads to a new condition due to \cite{ReinertRoellin:2009} given by
\begin{equation} \label{regressioncond}
\mathbb{E}[W'-W\mid W]=-\Lambda W+R
\end{equation}
for an invertible $d\times d$ matrix $\Lambda$ and a remainder term $R=R(W)$. 
Different exchangeable pairs, obviously, will yield different $\Lambda$ and $R$. 

Let us fix some more notations. The transpose of the inverse of a matrix will be presented in the form $A^{-t}:=(A^{-1})^t$. 
Furthermore we will need the supremum norm, denoted by $\parallel \cdot\parallel$ for both functions and matrices. 
For derivatives of smooth functions $f: \R^d \to \R$, we use the notation $\nabla$ for the gradient operator.
For a function $f:\mathbb{R}^d\to \mathbb{R}$, we abbreviate 
\begin{equation} \label{speznorm}
|f|_1 := \sup\limits_{i}\parallel\frac{\partial}{\partial x_i}f\parallel, \quad |f|_2:=
\sup\limits_{i,j}\parallel\frac{\partial^2}{\partial x_i\partial x_j}f\parallel,
\end{equation}
and so on, if these derivatives exist. 

The method of Stein is based on the characterization of the normal distribution that $Y \in \mathbb{R}^d$, $d\in \mathbb{N}$, is a centered multivariate normal 
vector with covariance matrix $\Sigma$ if and only if
$$ 
\mathbb{E}\left[ \nabla^t \Sigma \nabla f(Y)- Y^t \nabla f(Y) \right]=0 \quad \text{for all smooth} \quad  f:\mathbb{R}^d \to \mathbb{R}.
$$ 
It is well known, see \cite{Barbour:1990} and \cite{Goetze:1991}, that for any $g:\mathbb{R}^d\to\mathbb{R}$ being differentiable with bounded first derivatives,
if $\Sigma\in\mathbb{R}^{d\times d}$ is symmetric and positive definite, there is a solution $f:\mathbb{R}^d \to \mathbb{R}$ to the equation
\begin{equation} \label{Steinequation}
\nabla^t \Sigma \nabla f(w) - w^t \nabla f(w)= g(w)- \mathbb{E} g \left(\Sigma^{1/2} Z\right),
\end{equation}
which holds for every $w\in\mathbb{R}^d$. If, in addition, $g$ is $n$ times differentiable, there is a solution $f$ which is also $n$ times differentiable
and one has for every $k=1, \ldots, n$ the bound
$ 
\bigg| \frac{\partial^k f(w)}{\prod_{j=1}^k \partial w_{i_j}} \bigg| \leq \frac 1k \bigg| \frac{\partial^k g(w)}{\prod_{j=1}^k \partial w_{i_j}} \bigg|
$ 
for every $w \in \R^d$. We will apply Theorem 2.1 in \cite{ReinertRoellin:2009}.

\begin{theorem} \label{RR}(Reinert, R\"ollin: 2009)\\
Assume that $(W,W')$ is an exchangeable pair of $\mathbb{R}^d$-valued random vectors such that
$$
\mathbb{E}[W]=0, \quad \mathbb{E} [W \, W^t] =\Sigma,
$$
with $\Sigma \in \mathbb{R}^{d\times d}$ symmetric and positive definite. If $(W,W')$ satisfies \eqref{regressioncond}
for an invertible matrix $\Lambda$ and a $\sigma(W)$-measurable random vector $R$ and if $Z$ has d-dimensional standard normal distribution, 
we have for every three times differentiable function $g$,
\begin{equation} \label{mainboundRR}
 \big| \mathbb{E} g(W) - \mathbb{E} g\left(\Sigma^{1/2} Z\right) \big| \leq \frac{|g|_2}{4}A+\frac{|g|_3}{12}B+\left(|g|_1+\frac{1}{2}d\parallel\Sigma\parallel^{1/2}
|g|_2\right)C,
\end{equation}
where, with $\lambda^{(i)}:= \sum\limits_{m=1}^d \big| (\Lambda^{-1})_{m,i} \big|$,
\begin{eqnarray}
A &= &\sum\limits_{i,j=1}^d \lambda^{(i)} \sqrt{\mathbb{V} \left[\mathbb{E}[(W'_i-W_i)(W'_j-W_j)\mid W]\right]}, \nonumber \\ 
B&=&\sum\limits_{i,j,k=1}^d\lambda^{(i)}\mathbb{E} | (W'_i-W_i)(W'_j-W_j)(W'_k-W_k) |,\\
C&=&\sum\limits_{i=1}^d\lambda^{(i)}\sqrt{\mathbb{E}\left[R_i^2\right]}. \nonumber
\end{eqnarray}
\end{theorem}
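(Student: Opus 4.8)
The plan is to run Stein's method for the multivariate normal law with covariance $\Sigma$, feeding in the characterisation \eqref{Steinchar}, the exchangeability of $(W,W')$, and the linear regression condition \eqref{regressioncond}. First I would fix a three times differentiable $g$ and take $f$ to be the solution of the Stein equation \eqref{Steinequation}, which by \eqref{Steinbound} obeys $|f|_k\le\frac1k|g|_k$ for $k=1,2,3$. Evaluating \eqref{Steinequation} at $w=W$ and taking expectations gives
$$
\mathbb{E} g(W)-\mathbb{E} g(\Sigma^{-1/2}Z)=\mathbb{E}\big[\nabla^t\Sigma\nabla f(W)\big]-\mathbb{E}\big[W^t\nabla f(W)\big],
$$
so it suffices to bound the right-hand side by the expression in \eqref{mainboundRR}.

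The heart of the matter is to rewrite $\mathbb{E}[W^t\nabla f(W)]$ so that it reproduces $\mathbb{E}[\nabla^t\Sigma\nabla f(W)]$ up to controllable remainders. Inverting \eqref{regressioncond} as $W=-\Lambda^{-1}\mathbb{E}[W'-W\mid W]+\Lambda^{-1}R$, I would write, for each $m$,
$$
\mathbb{E}\big[W_m\partial_m f(W)\big]=-\sum_{i}(\Lambda^{-1})_{mi}\,\mathbb{E}\big[(W_i'-W_i)\partial_m f(W)\big]+\sum_{i}(\Lambda^{-1})_{mi}\,\mathbb{E}\big[R_i\partial_m f(W)\big].
$$
For the first sum I would use that $(w,w')\mapsto(w_i'-w_i)\bigl(\partial_m f(w)+\partial_m f(w')\bigr)$ is antisymmetric, hence has mean zero under the exchangeable pair, and then expand $\partial_m f(W')$ to second order about $W$, which yields
$$
\mathbb{E}\big[(W_i'-W_i)\partial_m f(W)\big]=-\tfrac12\sum_{k}\mathbb{E}\big[(W_i'-W_i)(W_k'-W_k)\,\partial_k\partial_m f(W)\big]-\tfrac12\,\rho_{i,m},
$$
with a Taylor remainder satisfying $|\rho_{i,m}|\le\tfrac12|f|_3\sum_{k,l}|W_i'-W_i||W_k'-W_k||W_l'-W_l|$.

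Summing over $m$, the principal part of $\mathbb{E}[W^t\nabla f(W)]$ is $\tfrac12\sum_{i,k,m}(\Lambda^{-1})_{mi}\,\mathbb{E}[(W_i'-W_i)(W_k'-W_k)\partial_k\partial_m f(W)]$. Here I would replace the conditional second moment $\mathbb{E}[(W_i'-W_i)(W_k'-W_k)\mid W]$ by its unconditional mean $c_{ik}:=\mathbb{E}[(W_i'-W_i)(W_k'-W_k)]$; estimating the replacement error by $\mathbb{E}|X-\mathbb{E}X|\le\sqrt{\mathbb{V}[X]}$, using $|f|_2\le\tfrac12|g|_2$, and collecting the weights $\lambda^{(i)}=\sum_m|(\Lambda^{-1})_{mi}|$ produces exactly $\tfrac{|g|_2}{4}A$. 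The matrix $c$ itself is found from exchangeability and \eqref{regressioncond}: one checks $\mathbb{E}[(W'-W)(W'-W)^t]=\Lambda\Sigma+\Sigma\Lambda^t-\mathbb{E}[RW^t]-\mathbb{E}[WR^t]$, while equating the two representations of $\mathbb{E}[WW'^t]$ gives the automatic identity $\Lambda\Sigma-\Sigma\Lambda^t=\mathbb{E}[RW^t]-\mathbb{E}[WR^t]$. Contracting $\tfrac12(\Lambda^{-1}c)$ against the symmetric Hessian $\mathbb{E}[\partial_k\partial_m f(W)]$ and using $\Lambda^{-1}(\Lambda\Sigma)=\Sigma$ together with $\Lambda\Sigma\Lambda^{-t}=\Sigma+(\Lambda\Sigma-\Sigma\Lambda^t)\Lambda^{-t}$, the leading part $\Lambda\Sigma+\Sigma\Lambda^t$ collapses to $\mathbb{E}[\nabla^t\Sigma\nabla f(W)]$ modulo an $R$-order correction coming from the non-symmetry of $\Lambda\Sigma$, while the $\mathbb{E}[RW^t]+\mathbb{E}[WR^t]$ part of $c$ is itself $R$-order; both are absorbed into the $C$-term.

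It then remains to collect three families of error terms, always inserting $|f|_k\le\frac1k|g|_k$: the second-moment replacement error, which is $\tfrac{|g|_2}{4}A$ as above; the Taylor remainders $\rho_{i,m}$, which through the displayed bound and $|f|_3\le\tfrac13|g|_3$ sum to $\tfrac{|g|_3}{12}B$; the term $\sum_{i,m}(\Lambda^{-1})_{mi}\mathbb{E}[R_i\partial_m f(W)]$, bounded by $|g|_1 C$ using $\mathbb{E}|R_i|\le\sqrt{\mathbb{V}[R_i]}$ (note $\mathbb{E}[R]=0$, since $\mathbb{E}[W'-W]=\mathbb{E}[\Lambda W]=0$); and the $R$-order corrections from the previous step, bounded by $\tfrac12 d\,\parallel\Sigma\parallel^{1/2}|g|_2\,C$ after Cauchy--Schwarz ($|\mathbb{E}[R_iW_k]|\le\sqrt{\mathbb{V}[R_i]}\,\Sigma_{kk}^{1/2}\le\sqrt{\mathbb{V}[R_i]}\,\parallel\Sigma\parallel^{1/2}$) and summation of the $\lambda^{(i)}$-weights. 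Adding these yields \eqref{mainboundRR}. I expect the main obstacle to lie in this last bookkeeping: one must simultaneously justify that replacing the conditional second moment by its mean costs only the variance term $A$, and arrange the $\Lambda^{-1}$-contraction so that $\tfrac12(\Lambda^{-1}c)$ reconstructs $\Sigma$ itself rather than the conjugate $\Lambda\Sigma\Lambda^{-t}$ — which succeeds exactly because $\Lambda\Sigma$ is symmetric up to the $R$-order term $\mathbb{E}[RW^t]-\mathbb{E}[WR^t]$, which then gets absorbed into the $C$-term.
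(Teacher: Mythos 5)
Your proposal is correct and is essentially the proof of this result: the paper itself only cites Reinert and R\"ollin for it, and both their argument and the paper's own proof of the Kolmogorov analogue (Theorem \ref{Kolmogorov}) proceed exactly as you do --- Stein equation with the bounds \eqref{Steinbound}, the antisymmetric function $(w'-w)^t\Lambda^{-t}\bigl(\nabla f(w')+\nabla f(w)\bigr)$ killed by exchangeability, a second-order Taylor expansion giving the $B$-term, replacement of the conditional second moment by its mean giving the $A$-term, and the identity $\mathbb{E}\bigl[(W'-W)(W'-W)^t\bigr]=2\Lambda\Sigma-2\mathbb{E}[RW^t]$ whose $\Lambda^{-1}$-contraction reproduces $\Sigma$ up to the $R$-order corrections feeding the $C$-term. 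Your detour through the symmetrized form $\Lambda\Sigma+\Sigma\Lambda^t-\mathbb{E}[RW^t]-\mathbb{E}[WR^t]$ is harmless provided, as your final Cauchy--Schwarz step indicates, you first recombine the two $R$-order corrections into the single matrix $\Lambda^{-1}\mathbb{E}[RW^t]$ (the $\mathbb{E}[WR^t]$ pieces cancel) before taking absolute values, which is what yields exactly the stated constant $\tfrac12 d\parallel\Sigma\parallel^{1/2}|g|_2$ in front of $C$.
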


The advantage of Stein's method is that the bounds to a multivariate normal distribution reduce to the 
computation of, or bounds on, low order moments, here bounds on the absolute third moments, on a conditional variance and on the variance 
of the remainder term. Such variance computations may be difficult, but we will get rates of convergence at the same time.
In the same context as in \cite{ReinertRoellin:2009} we can show the following theorem, presenting bounds for non-smooth test functions (improving Corollary 3.1 in \cite{ReinertRoellin:2009}). 
Our development differs from Reinert and R\"ollin, as we use the relationship to the bounds in \cite{Rinott/Rotar:1997}. 

\begin{theorem}\label{Kolmogorov}~\\
Let $(W,W')$ be an exchangeable pair with $\E[W]=0$ and $\E[WW^t]=\Sigma$ with $\Sigma\in\R^{d\times d}$ symmetric and positive definite. Again we assume that $(W,W')$ satisfies \eqref{regressioncond} for an invertible matrix $\Lambda$ and a $\sigma(W)$-measurable random vector $R$ and additionally, for $i\in\{1,\ldots,d\}$, $|W_i'-W_i|\leq A$. Then,
\begin{eqnarray*}
\sup_{g\in\G} |\E g(W)-\E g(\Sigma^{1/2}Z)| &\leq& C \bigl[
\log(t^{-1})A_1+\left(\log(t^{-1})\Vert\Sigma\Vert^{1/2}+1\right)A_2\\
&&+\biggl(1+\log(t^{-1})\sum\limits_{i=1}^d\E|W_i|+a\biggr)A^3A_3
+aA\bigr],
\end{eqnarray*}
where
\begin{eqnarray*}
A_1 &= & \sum\limits_{i,j=1}^d|(\Lambda^{-1})_{j,i}|\sqrt{\V\bigl[\E[(W'_i-W_i)(W'_j-W_j)| W]\bigr]},\\
A_2&=&\sum\limits_{i,j=1}^d|(\Lambda^{-1})_{j,i}|\sqrt{\E\left[R_i^2\right]}, \qquad
A_3=\sum\limits_{i=1}^d\max\limits_{j\in\{1,\ldots,d\}}|(\Lambda^{-1})_{j,i}|,
\end{eqnarray*} 
C denotes a constant that depends on $d$, $\sqrt{t}=2CA^3A_3$ and $a>1$ is taken from the conditions on $\G$, defined after Theorem \ref{THUM}.
\end{theorem}
\begin{proof}
First we assume that $\Sigma=\text{Id}$. Throughout the proof we write $C$ for universal constants, 
not necessarily the same at each occurrence. We consider the multivariate Stein equation deduced from \eqref{Steinequation} with $\Sigma=\text{Id}$ given by
\begin{align}\label{MVStein}
\nabla^t \nabla f(w)-w^t\cdot \nabla f(w)&=g(w)-\E[g(Z)].
\end{align}
For $g \in \G$ define the following smoothing
\begin{align}\label{gt}
g_t(x)=\int\limits_{\R^d}g\left(\sqrt{t}z+\sqrt{1-t}x\right)\Phi(z)dz.
\end{align}
For $g_t$, \eqref{MVStein} is solved by the function
$
f_t(x)=-\frac{1}{2}\int\limits_{t}^{1} \bigl( g_s(x)-\mathbb{E}[g(Z)] \bigr) \frac{ds}{1-s},
$
see \cite{Goetze:1991}. Again by \cite{Goetze:1991} (see also \cite{Bhattacharya/Holmes:2010}), we have that for $|g|\leq 1$, there exists a constant $C$, depending
only on the dimension $d$,  such that in the notation of \eqref{speznorm}
\begin{align}\label{GoeAb}
|f_t|_1 &\leq C,\\\label{GoeAb2}
|f_t|_2 & \leq C\log(t^{-1}).
\end{align}
According to \cite{Goetze:1991} (see also \cite{ReinertRoellin:2009}, Lemma A.1) there is also a constant $C>0$, depending on $d$, such that for all $t\in(0,1)$
\begin{align}\label{Abnonsm}
\sup\bigl\{\left|\mathbb{E}g(W)-\mathbb{E}g(Z)\right|:g\in\mathcal{G}\bigr\}&\leq C\cdot \delta_t+a\sqrt{t},
\end{align}
where $a>1$ is the constant appearing in (4) in the definition of $\mathcal{G}$ and
\begin{align}\nonumber
\delta_t:=\sup\bigl\{\left|\mathbb{E}g_t(W)-\mathbb{E}g_t(Z)\right|:g\in\mathcal{G}\bigr\}.
\end{align}
 Thus, it remains to estimate $\delta_t$. We see that, for an exchangeable pair  we have
$
0=\frac{1}{2}\E\bigl[(W'-W)^t\Lambda^{-t}(\nabla f_t(W')+\nabla f_t(W))\bigr].
$
After adding and subtracting the same expression we are able to use the linear regression condition on the expression $(W'-W)$ in the expectation that yields
\begin{align*}
0&=\E\bigl[(W'-W)^t\Lambda^{-t}\nabla f_t(W)\bigr]+\frac{1}{2}\E\bigl[(W'-W)^t\Lambda^{-t}(\nabla f_t(W')-\nabla f_t(W))\bigr]\\
&=\E\bigl[R^t\Lambda^{-t}\nabla f_t(W)\bigr]-\E\bigl[W^t\nabla f_t(W)\bigr]+\frac{1}{2}\E\bigl[(W'-W)^t\Lambda^{-t}(\nabla f_t(W')-\nabla f_t(W))\bigr].
\end{align*}
Abbreviating $f^{(1)}_j:=\frac{\partial}{\partial x_j}f_t$ and $f^{(2)}_{i,j}:=\frac{\partial^2}{\partial x_j\partial x_i}f_t$, etc. for a function $f_t$, we obtain 
\begin{align}\nonumber
\mathbb{E}\bigl[W^t\nabla f_t(W)\bigr]&=\frac{1}{2}\mathbb{E}\bigl[(W'-W)^t\Lambda^{-t}(\nabla f_t(W')-\nabla f_t(W))\bigr]+\mathbb{E}\bigl[R^t\Lambda^{-t}\nabla f_t(W)\bigr]\\\nonumber
&=\frac{1}{2}\sum\limits_{i,j=1}^d(\Lambda^{-1})_{j,i}\mathbb{E}\bigl[(W_i'-W_i)(f^{(1)}_j(W')-f^{(1)}_j(W))\bigr]\\\nonumber
&~~+\sum\limits_{i,j=1}^d(\Lambda^{-1})_{j,i}\mathbb{E}\bigl[R_if^{(1)}_j(W)\bigr].
\end{align}
Hence,\\
$\mathbb{E}g_t(W)-\mathbb{E}g_t(Z)$
\begin{align}\nonumber
&=\sum\limits_{i=1}^d\mathbb{E}\bigl[f^{(2)}_{i,i}(W)\bigr]-\frac{1}{2}\sum\limits_{i,j=1}^d(\Lambda^{-1})_{j,i}\mathbb{E}\bigl[(W_i'-W_i)(f^{(1)}_j(W')-f^{(1)}_j(W))\bigr]\\\nonumber
&~~-\sum\limits_{i,j=1}^d(\Lambda^{-1})_{j,i}\mathbb{E}\bigl[R_if^{(1)}_j(W)\bigr]\\\nonumber
&=\frac{1}{2}\mathbb{E}\biggl[2\sum\limits_{i=1}^df^{(2)}_{i,i}(W)-2\sum\limits_{i,j=1}^d(\Lambda^{-1})_{j,i}R_iW_jf^{(2)}_{j,j}(W)\\\nonumber
&~~-\sum\limits_{i,j=1}^d(\Lambda^{-1})_{j,i}(W'_i-W_i)(W'_j-W_j)f^{(2)}_{j,j}(W)\biggr]\\\nonumber
&~~+\mathbb{E}\biggl[\sum\limits_{i,j=1}^d(\Lambda^{-1})_{j,i}R_iW_jf^{(2)}_{j,j}(W)-\sum\limits_{i,j=1}^d(\Lambda^{-1})_{j,i}R_if^{(1)}_j(W)\biggr]\\\nonumber
&~~-\frac{1}{2}\mathbb{E}\biggl[\sum\limits_{i,j=1}^d(\Lambda^{-1})_{j,i}(W_i'-W_i)(f^{(1)}_j(W')-f^{(1)}_j(W))\\\nonumber
&~~-\sum\limits_{i,j=1}^d(\Lambda^{-1})_{j,i}(W'_i-W_i)(W'_j-W_j)f^{(2)}_{j,j}(W)\biggr]\\\nonumber
&=:J_1+J_2+J_3.
\end{align}
Using \eqref{GoeAb2} and the fact that $\mathbb{E}\bigl[(W'-W)(W'-W)^t\bigr]=2\Lambda^t-2\mathbb{E}\bigl[WR^t\bigr]$ we have
\begin{align}\nonumber
|J_1|&=\bigg|\frac{1}{2}\mathbb{E}\bigl[2\sum\limits_{i=1}^df^{(2)}_{i,i}(W)-2\sum\limits_{i,j=1}^d(\Lambda^{-1})_{j,i}R_iW_jf^{(2)}_{j,j}(W)\\\nonumber
&~~-\sum\limits_{i,j=1}^d(\Lambda^{-1})_{j,i}(W'_i-W_i)(W'_j-W_j)f^{(2)}_{j,j}(W)\bigl]\bigg|\\\nonumber
&\leq C\log(t^{-1})\sum\limits_{i,j=1}^d|(\Lambda^{-1})_{j,i}|\mathbb{E}\bigl|2(\Lambda)_{j,i}-2R_iW_j-\mathbb{E}\bigl[(W'_i-W_i)(W'_j-W_j)| W\bigr]\bigr|\\\nonumber
&\leq C\log(t^{-1})\sum\limits_{i,j=1}^d|(\Lambda^{-1})_{j,i}|\sqrt{\mathbb{E}\bigl[\left(2(\Lambda)_{j,i}-2R_iW_j-\mathbb{E}\bigl[(W'_i-W_i)(W'_j-W_j)| W\bigr]\right)^2\bigr]}\\ \nonumber
&=C\log(t^{-1})\sum\limits_{i,j=1}^d|(\Lambda^{-1})_{j,i}|\sqrt{\mathbb{V}\left[\mathbb{E}[(W'_i-W_i)(W'_j-W_j)| W]\right]}.
\end{align}
Additionally, again using \eqref{GoeAb} and \eqref{GoeAb2} and the fact that $\mathbb{E}\bigl[WW^t\bigr]=\text{Id}$, we have
\begin{align}\nonumber
|J_2|&=\bigg|\mathbb{E}\bigl[\sum\limits_{i,j=1}^d(\Lambda^{-1})_{j,i}R_iW_jf^{(2)}_{j,j}(W)-\sum\limits_{i,j=1}^d(\Lambda^{-1})_{j,i}R_if^{(1)}_j(W)\bigr]\bigg|\\\nonumber
&\leq C\log(t^{-1})\sum\limits_{i,j=1}^d|(\Lambda^{-1})_{j,i}| \, \mathbb{E}\left| R_iW_j\right| + C\sum\limits_{i,j=1}^d|(\Lambda^{-1})_{j,i}| \, \mathbb{E}\left|R_i\right|\\\nonumber
&\leq C( \log(t^{-1}) +1) \sum\limits_{i,j=1}^d|(\Lambda^{-1})_{j,i}|\sqrt{\mathbb{E} [R_i^2]}.
\end{align}
The estimation of $J_3$ is a bit more involved. With $\lambda_i=\max\limits_{j\in\{1,\ldots,d\}}|(\Lambda^{-1})_{j,i}|$ we have
\begin{align}\nonumber
|2J_3|&=\bigg|\sum\limits_{i,j=1}^d\mathbb{E}\biggl[(\Lambda^{-1})_{j,i}(W_i'-W_i)(f^{(1)}_j(W')-f^{(1)}_j(W))\\\nonumber
&~~-(\Lambda^{-1})_{j,i}(W'_i-W_i)(W'_j-W_j)f^{(2)}_{j,j}(W)\biggr]\bigg|\\\nonumber
&\leq \sum\limits_{i=1}^d \lambda_i \biggl| \mathbb{E} \biggl[ (W_i'-W_i) \sum_{j=1}^d (f^{(1)}_j(W')-f^{(1)}_j(W)-(W'_j-W_j)f^{(2)}_{j,j}(W))\biggr]\biggr|\\\nonumber
&\leq \sum\limits_{i=1}^d\lambda_i \bigg|\mathbb{E}\bigl[A^3\sum\limits_{j,k=1}^d\int\limits_{0}^1(1-\tau)f^{(3)}_{j,j,k}(W'-\tau(W'-W))d\tau\bigr] \bigg|.
\end{align}
We abbreviate $M:=W'-\tau(W'-W)$. It is important to notice that we can use \eqref{MVStein} to obtain for $w\in\mathbb{R}^d$
\begin{center}
$\sum\limits_{j,k=1}^df^{(3)}_{j,j,k}(w)+\sum\limits_{j,k=1}^dw_jf^{(2)}_{j,k}(w)+\sum\limits_{k=1}^df^{(1)}_{k}(w)=\sum\limits_{k=1}^dg^{(1)}_{k}(w)$.
\end{center}
Additionally we notice that
$ \int\limits_0^1(1-\tau)d\tau =\frac{1}{2}$ and $ 
\bigg| \int\limits_0^1(1-\tau)M_id\tau \bigg|  \leq|W'_i|+|W_i|$.
Thus,
\begin{align}\nonumber
|2J_3|&\leq \sum\limits_{i=1}^d\lambda_iA^3 \bigg| \mathbb{E} \biggl[ \int\limits_0^1(1-\tau) \biggl( -\sum\limits_{k=1}^df^{(1)}_{k}(M)-\sum\limits_{j,k=1}^dM_jf^{(2)}_{j,k}(M)
+\sum\limits_{k=1}^dg^{(1)}_{k}(M) \biggr) d\tau \biggr] \bigg|\\ \nonumber
&\leq \sum\limits_{i=1}^d\lambda_i\bigg( A^3C+A^3\sum\limits_{j=1}^dC\log(t^{-1})\mathbb{E}\bigl[|W'_j|+|W_j|\bigr]+ \bigg|A^3\sum\limits_{k=1}^d\mathbb{E}\bigl[\int\limits_0^1(1-\tau)g^{(1)}_{k}(M)d\tau\bigr]\bigg|\bigg)\\\nonumber
&\leq \sum\limits_{i=1}^d\lambda_i\biggl( A^3C\bigl(1+\log(t^{-1})\sum\limits_{j=1}^d\mathbb{E}\bigl|W_j\bigr|\bigr)+T\biggr)
\end{align}
with
\begin{align*}
T:=\bigg|A^3\sum\limits_{k=1}^d\mathbb{E}\bigl[\int\limits_0^1(1-\tau)g^{(1)}_{k}(M)d\tau\bigr]\bigg|.
\end{align*}
By partial integration we obtain
$$
\sum\limits_{k=1}^dg^{(1)}_{k}(w)=\sum\limits_{k=1}^d\frac{\sqrt{1-t}}{\sqrt{t}}\int\limits_{\mathbb{R}^d}g\left(\sqrt{t}z+\sqrt{1-t}w\right)\Phi^{(1)}_{k}(z)dz.
$$
Keeping in mind that $\sum\limits_{k=1}^d\int\limits_{\mathbb{R}^d}\Phi^{(1)}_{k}(z)dz=0$ and using the definitions \eqref{g+}, \eqref{g-} and \eqref{gschl} we obtain
\begin{align}\nonumber
T&\leq \frac{\sqrt{1-t}}{\sqrt{t}}A^3\bigg|\sum\limits_{k=1}^d\mathbb{E}\bigl[\int\limits_0^1\int\limits_{\mathbb{R}^d}(1-\tau)g(\sqrt{t}z+\sqrt{1-t}M)\Phi^{(1)}_{k}(z)dzd\tau\bigr]\bigg|\\\nonumber
&=\frac{\sqrt{1-t}}{\sqrt{t}}A^3\bigg|\sum\limits_{k=1}^d\mathbb{E}\bigl[\int\limits_0^1\int\limits_{\mathbb{R}^d}(1-\tau)[g(\sqrt{t}z+\sqrt{1-t}M)-g(\sqrt{1-t}M)]\Phi^{(1)}_{k}(z)dzd\tau\bigr]\bigg|\\\nonumber
&\leq \frac{\sqrt{1-t}}{2\sqrt{t}}A^3\sum\limits_{k=1}^d\mathbb{E}\bigl[\int\limits_{\mathbb{R}^d}[g_{\sqrt{1-t}A+\sqrt{t}|z|}^+(\sqrt{1-t}W)-g_{\sqrt{1-t}A+\sqrt{t}|z|}^-(\sqrt{1-t}W)]\bigl|\Phi^{(1)}_{k}(z)\bigr|dz\bigr]\\\nonumber
&\leq \frac{\sqrt{1-t}}{2\sqrt{t}}A^3\sum\limits_{k=1}^d\mathbb{E}\bigl[\int\limits_{\mathbb{R}^d}\underbrace{\tilde g(\sqrt{1-t}W;\sqrt{1-t}A+\sqrt{t}|z|)}_{=:\tilde g(W,A,t,z)}\bigl|\Phi^{(1)}_{k}(z)\bigr|dz\bigr]\\\nonumber
&\leq \frac{\sqrt{1-t}}{2\sqrt{t}}A^3\sum\limits_{k=1}^d\mathbb{E}\bigl[\int\limits_{\mathbb{R}^d}[\tilde g(W,A,t,z)-\tilde g(Z,A,t,z)]\bigl|\Phi^{(1)}_{k}(z)\bigr|dz\bigr]\\\nonumber
&~~+\frac{\sqrt{1-t}}{2\sqrt{t}}A^3\sum\limits_{k=1}^d\mathbb{E}\bigl[\int\limits_{\mathbb{R}^d}\tilde g(Z,A,t,z)\bigl|\Phi^{(1)}_{k}(z)\bigr|dz\bigr]\\\nonumber
&=:B_1+B_2.
\end{align}
With $\delta:=\sup\bigl\{\left|\mathbb{E}[g(W)]-\mathbb{E}[g(Z)]\right|:g\in\mathcal{G}\bigr\}$ we obtain
$$
B_1\leq C\frac{\sqrt{1-t}}{2\sqrt{t}}A^3\cdot \delta\leq \frac{C\delta}{\sqrt{t}}A^3.
$$
Furthermore by using the conditions established for the function class $\mathcal{G}$
\begin{align}\nonumber
B_2&\leq\frac{\sqrt{1-t}}{2\sqrt{t}}A^3\cdot a\left(\sqrt{1-t}A+\sum\limits_{k=1}^d\int\limits_{\mathbb{R}^d}\sqrt{t}\bigl|z\bigr|\bigl|\Phi^{(1)}_{k}(z)\bigr|dz\right)\\\nonumber
&=a\frac{1-t}{2\sqrt{t}}A^4+aC\frac{\sqrt{1-t}}{2}A^3\\\nonumber
&\leq aCA^3\left(\frac{A}{\sqrt{t}}+1\right).
\end{align}
Thus, combining the estimates of $J_1,J_2$ and $J_3$ with \eqref{Abnonsm}, we have
\begin{eqnarray*}
\delta&\leq& C\bigl[\log(t^{-1})A_1+\log(t^{-1})A_2+A^3A_3\biggl(1+\log(t^{-1})\sum\limits_{i=1}^q\E|W_j|\biggr)\\
&&+\frac{\delta}{\sqrt{t}}A^3A_3+aA^3A_3\bigl(\frac{A}{\sqrt{t}}+1\bigr)\bigr]+a\sqrt{t}.
\end{eqnarray*}
Setting $\sqrt{t}=2CA^3A_3$, provided it is less than 1, simple manipulations yield the result for $\Sigma=\text{Id}$. If $t>1$ for the choice above, then $C$ should be enlarged as necessary.\\
For general $\Sigma$ we can standardize $Y=\Sigma^{1/2}W$. With the conditions on $\G$ we have that $g\bigl(\Sigma^{1/2}x\bigr)\in\G$. 
Hence the bounds \eqref{GoeAb} and \eqref{GoeAb2} can be applied.
The proof now continues as for the $\Sigma=\text{Id}$ case, but with the standardized variables. 
\end{proof}

Note that we call a function $f$ {\it regular} if $f$ is finite on the interval $I=(a,b)$, $-\infty\leq a<b\leq \infty$, it is defined on and, at any interior point of $I$, $f$ possesses a right-hand limit and a left-hand limit. Further $f$ possesses a right-hand limit $f(a+)$ at the point $a$ and a left-hand limit $f(b-)$ at the point $b$. We will work with the class of functions introduced in \cite{DiaconisStein:2004} for which we let $p$ be a regular, strictly positive density on $I$. We suppose that this density has a derivative $p'$ that is also regular on $I$ with countable many sign changes. Furthermore $p'$ should be continuous at the sign changes and $\int_I p(x) | \log(p(x)) | dx<\infty$. 
Additionally we assume that
\begin{equation} \label{psi}
\psi(x):=\frac{p'(x)}{p(x)}
\end{equation}
is regular. A density $p$ fulfilling these conditions will be called {\it nice}. In \cite{DiaconisStein:2004} it is proven that a random variable $Z$ is distributed according to the density $p$ if and only if $\mathbb{E}\left[f'(Z)+\Psi(Z)f(Z)\right]=f(b-)p(b-)-f(a+)p(a+)$ for a suitably chosen class of functions $f$. The corresponding Stein-identity is
\begin{equation} \label{Steineqp}
f'(x)+\psi(x) f(x)=g(x)-P(g),
\end{equation}
where $g$ is a measurable function for which $\int\limits_I |g(x)| p(x) \, dx < \infty$, $P(x):=\int\limits_{-\infty}^x p(y)\,dy$ and 
$P(g):=\int\limits_I g(y)p(y) \, dy$.
For the proof of Theorem \ref{TT} we will apply Theorem 2.4 and Theorem 2.5 in \cite{Eichelsbacher/Loewe:2010}.

\begin{theorem} \label{EL} Let $(W,W')$ be an exchangeable pair of real-valued random variables such that
\begin{equation} \label{pStein}
\mathbb{E} \left[W'-W\mid W\right]= \lambda\psi(W)-R(W)
\end{equation}
for some random variable $R=R(W)$, $0<\lambda<1$ and $\psi$ as in \eqref{psi} with $p$ being a {\it nice} density. 
Let $p_W$ be a probability distribution such that a random variable $Z_W$ is distributed according to $p_W$ if and only if
\begin{align}\label{pStein2}
\E \bigl( \E[W \psi(W)] f'(Z_W) + \psi(Z_W) f(Z_W) \bigr)=0
\end{align} for a suitably chosen class of functions. 

{\bf (1)}: Let us assume that for any absolutely continuous function $g$ the solution $f_g$ of \eqref{pStein2} satisfies
$$
\parallel f_g \parallel \leq c_1 \parallel g'\parallel, \quad \parallel f_g'\parallel \leq c_2 \parallel g'\parallel \quad \text{and} \quad
\parallel f_g''\parallel\leq c_3\parallel g'\parallel.
$$
Then for any uniformly Lipschitz function $g$, we obtain $| \mathbb{E} \left[g(W)\right]-\mathbb{E}\left[g(Z_W)\right]| \leq \delta \parallel g'\parallel$
with
\begin{equation} \label{boundp}
\delta: = \frac{c_2}{2 \lambda} \bigl( \V \bigl( \E[(W-W')^2|W] \bigr) \bigr)^{1/2} + \frac{c_3}{4 \lambda} \E |W-W'|^3 + \frac{c_1+c_2 \sqrt{\E(W^2)}}{\lambda}
\sqrt{ \E (R^2)}.
\end{equation}
{\bf (2)}: Let us assume that for any function $g(x) := 1_{\{x \leq z\}}(x)$, $z \in \R$, the solution $f_z$ of \eqref{pStein2} satisfies
$$
|f_z(x)| \leq d_1, \quad |f_z'(x)| \leq d_2 \quad \text{and} \quad
|f_z'(x)-f_z'(y) | \leq d_3
$$
and
\begin{equation} \label{addcond}
|(\psi(x) \, f_z(x))'| = \bigl| ( \frac{p'(x)}{p(x)} \, f_z(x))' \bigr| \leq d_4
\end{equation}
for all real $x$ and $y$, where $d_1, d_2, d_3$ and $d_4$ are constants. Then we obtain for any $A >0$
\begin{eqnarray} \label{kolall}
\sup_{t \in \R} \big| P(W \leq t) - \int_{-\infty}^t p_W(t) \, dt \big|  & \leq & \frac{d_2}{2 \lambda}  \bigl( \V \bigl( \E[(W'-W)^2 |W] \bigr) \bigr)^{1/2} \nonumber \\
& & \hspace{-2cm} + \big( d_1 + d_2 \sqrt{\E(W^2)} + \frac 32 A \bigr)  \frac{\sqrt{\E(R^2)}}{\lambda} + \frac{1}{\lambda} \bigl( \frac{d_4 A^3}{4} \bigr) \\
& & \hspace{-2cm} +  \frac{3A}{2} \E (|\psi(W)|)
 +   \frac{d_3}{2 \lambda} \E \bigl( (W-W')^2 1_{\{|W-W'| \geq A\}} \bigr). \nonumber
\end{eqnarray}
\end{theorem}

\section{Auxiliary results}
Let us fix the convention that for $k,t \in\{1,\ldots,q\}$ we will always write $\sum\limits_{k\neq m}$ instead of 
$\sum\limits_{\stackrel{k=1}{k\neq m}}^q$, $\sum\limits_{k,t\neq m}$ instead of $\sum\limits_{\stackrel{k,t=1}{k\neq m, t \neq m}}^q$ and so on. 
First we state a result on the structure of the minimizers of $G_{\beta,h}$ (see Definition \ref{Gfunktion}), determined
in several papers, collected in \cite{Gandolfo:2010}.

\begin{prop} \label{ChMi}
Let $\beta,h\geq 0$ and let $x$ be a global minimum of $ G_{\beta,h}$.
\begin{enumerate}
\item The vector $x$ has the coordinate $\min(x_i)$ repeated $q-1$ times at least.
\item If $h>0$, then $x_1>x_i$, for all $i\in \{2,\ldots,q\}$.
\item The inequality $\min(x_i)>0$ holds.
\item For any $q\geq 3$ and any $(\beta,h)$, or $q=2$ and $(\beta,h)\neq (\beta_c,0)$, where $\beta_c$ denotes the critical temperature, one has $\min(x_i)<1/\beta$.
\end{enumerate}
\end{prop}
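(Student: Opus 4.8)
The plan is to characterize global minimizers of $G_{\beta,h}$ via their critical point equations and then extract the four structural facts more or less in sequence. First I would write down the stationarity condition. Since $G_{\beta,h}(u) = \frac12\beta\langle u,u\rangle - \log\bigl(\sum_{i=1}^q \exp(\beta u_i + h\delta_{i,1})\bigr)$, differentiating gives
$$
\frac{\partial}{\partial u_m} G_{\beta,h}(u) = \beta u_m - \beta\,\frac{\exp(\beta u_m + h\delta_{m,1})}{\sum_{k=1}^q \exp(\beta u_k + h\delta_{k,1})},
$$
so at any critical point $x$ one has $x_m = p_m(x)$, where $p_m(x)$ is precisely the Gibbs weight on the right; in particular $x$ automatically lies in $\mathcal H$ (the coordinates are positive and sum to $1$), which immediately yields item (3) and the strict positivity $\min(x_i)>0$, since each $p_m(x)>0$. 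For item (2), observe that for $i,j \in \{2,\ldots,q\}$ the weights $p_i(x)$ and $p_j(x)$ differ only through the factor $\exp(\beta x_i)$ versus $\exp(\beta x_j)$, so $x_i = x_j$ whenever the exponents agree, while comparing coordinate $1$ to coordinate $i\geq 2$ gives $x_1/x_i = \exp\bigl(\beta(x_1 - x_i) + h\bigr)$; if $h>0$ this forces $x_1 > x_i$ (if $x_1 \le x_i$ the right side would be $> 1$ yet the left $\le 1$, a contradiction).

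For item (1) — the claim that $\min(x_i)$ is attained at least $q-1$ times — I would argue that the coordinates $x_2,\ldots,x_q$ (those with no field term) must all be equal at a \emph{global} minimum. The clean way is to note that $G_{\beta,h}$ restricted to the subspace where $u_1$ is fixed is, up to the concave log term, a function of $(u_2,\ldots,u_q)$ that is symmetric under permutations of these coordinates; one then shows any global minimizer must be symmetric in $u_2,\ldots,u_q$. The standard tool here is a strict-convexity/rearrangement argument: the function $v \mapsto \log\sum e^{\beta v_k}$ is strictly convex along the direction $e_i - e_j$ unless $v_i = v_j$, while $\frac12\beta|v|^2$ is strictly convex in every direction, so if $x_i \ne x_j$ for some $i,j\ge 2$ one can symmetrize and strictly decrease $G$ — contradicting minimality. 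Hence at least $q-1$ of the coordinates coincide, and being equal they equal the minimum (since by item (2) the coordinate $1$ is the largest when $h>0$, and when $h=0$ all $q$ symmetric coordinates are equal so any of them works as the min). This reduces the whole problem to the one-parameter family $x_z = \bigl(\tfrac{1+z}{2}, \tfrac{1-z}{2(q-1)},\ldots\bigr)$, matching the parametrization in Theorem~\ref{Minima}.

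Item (4), the strict bound $\min(x_i) < 1/\beta$, is the one I expect to be the main obstacle, because it is a quantitative statement rather than a soft symmetry argument and it is exactly where the excluded case $q=2$, $(\beta,h)=(\beta_c,0)$ enters. Here I would use second-order information: at a global minimum the Hessian $\mathrm{Hess}\,G_{\beta,h}(x)$ must be positive semidefinite on $\mathcal M$. Computing, $\mathrm{Hess}\,G_{\beta,h}(x) = \beta\,\mathrm{Id} - \beta^2\bigl(\mathrm{diag}(p) - p p^t\bigr)$ with $p = (x_1,\ldots,x_q)$ at a critical point. Testing this against a vector supported on the $q-1$ equal coordinates that are off-diagonal in $pp^t$ (e.g. $e_i - e_j$ with $i,j$ among the repeated block, which kills the $pp^t$ rank-one term) gives the inequality $\beta - \beta^2 \min(x_i) \ge 0$, i.e. $\min(x_i) \le 1/\beta$; strictness then needs ruling out equality, which corresponds to a degenerate Hessian, and one checks this degeneracy can only occur precisely at $q=2$, $(\beta,h)=(\beta_c,0)$ — the known second-order phase transition point of the Curie–Weiss model. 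The bookkeeping of when the Hessian is merely semidefinite versus strictly positive is the delicate part; everything else is routine manipulation of the fixed-point equation and convexity. Alternatively, and perhaps more cleanly, item (4) can be read off directly from the explicit analysis of $G_{\beta_z,h_z}$ on the line $x_z$ carried out in \cite{Gandolfo:2010} and \cite{Blanchard:2008}, which is the route I would actually cite, since Proposition~\ref{ChMi} is attributed to results collected there.
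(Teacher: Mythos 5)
First, a point of comparison: the paper does not prove Proposition \ref{ChMi} at all --- it is quoted as a result ``determined in several papers, collected in \cite{Gandolfo:2010}'', so your fallback of simply citing \cite{Gandolfo:2010}/\cite{Blanchard:2008} is in fact what the authors do. Judged as a proof, your sketch is fine for item (3) (positivity from the fixed-point equation $x_m=e^{\beta x_m+h\delta_{m,1}}/Z$), but it has genuine gaps in (1), (2) and in the strictness of (4). For (2): from $x_1/x_i=e^{\beta(x_1-x_i)+h}$ and $x_1\le x_i$ you cannot conclude that the right-hand side exceeds $1$, since the exponent $\beta(x_1-x_i)+h$ may well be negative; the claimed contradiction simply does not follow. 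Worse, criticality alone cannot yield (2): for small $h>0$ and large $\beta$ there are critical points (even local minima, continuations of the $h=0$ minimizers favouring a colour $i\neq1$) with $x_1<x_i$, so any correct argument must use \emph{global} minimality --- e.g.\ swap coordinates $1$ and $i$: the quadratic term is unchanged while the sum inside the logarithm changes by $(e^h-1)\bigl(e^{\beta x_i}-e^{\beta x_1}\bigr)$, so $x_i>x_1$ would strictly lower $G_{\beta,h}$, and $x_1=x_i$ is excluded by the stationarity equations (it forces $h=0$).

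For (1), the symmetrization principle you invoke is false as stated: replacing $(x_i,x_j)$ by their average strictly decreases $\tfrac{\beta}{2}\langle u,u\rangle$, but by convexity of $s\mapsto e^{\beta s}$ it also decreases $\sum_k e^{\beta u_k+h\delta_{k,1}}$ and hence \emph{increases} the term $-\log\sum_k e^{\beta u_k+h\delta_{k,1}}$, so the net change has no definite sign ($G_{\beta,h}$ is a difference of convex functions). Indeed, if such symmetrization always strictly decreased $G$, then at $h=0$, where the model is fully symmetric, the uniform vector would be the unique global minimizer for every $\beta$, contradicting Theorem \ref{Minima}(4). The standard route is different: the stationarity equations say $x_2,\dots,x_q$ are all roots of the unimodal equation $s e^{-\beta s}=1/Z$, hence take at most two values, one below and one above $1/\beta$, and one then combines this with second-order information (your $e_i-e_j$ Hessian test) and global minimality. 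That Hessian test is also the right idea for $\min(x_i)\le 1/\beta$ in (4), but note it presupposes (1) and $q\ge3$ (for $q=2$ the repeated block has size one and no such test direction exists), and the \emph{strict} inequality is exactly the delicate point you postpone: when $\min(x_i)=1/\beta$ the quadratic form vanishes along $e_i-e_j$ and a genuine higher-order or parametrization argument is needed. The authors themselves stress, just before Lemma \ref{wangimprovement}, that the analogous non-degeneracy claim in \cite{Wang:1994} is incorrect, and their Appendix proof of Lemma \ref{wangimprovement} exists precisely because this step cannot be waved through.
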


An important identity is the following simple statement.

\begin{lemma} \label{expGD}
For $u\in\mathbb{R}^q$, we obtain
$$
\frac{\exp\left(\beta u_m+h\delta_{m,1}\right)}{\sum\limits_{k=1}^q\exp\left(\beta u_k+h\delta_{k,1}\right)}= u_m-\frac{1}{\beta}\frac{\partial}{\partial u_m}G_{\beta,h}(u).
$$
\end{lemma}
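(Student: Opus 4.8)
The plan is to verify the identity by a direct computation: differentiate the explicit formula \eqref{Gfunktion} for $G_{\beta,h}$ with respect to $u_m$ and rearrange. Writing $Z(u) := \sum_{k=1}^q \exp(\beta u_k + h\delta_{k,1})$ for the (spin-free) partition sum appearing inside the logarithm, we have $G_{\beta,h}(u) = \tfrac12 \beta \langle u,u\rangle - \log Z(u)$.

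First I would compute the two contributions separately. The quadratic term gives $\frac{\partial}{\partial u_m}\bigl(\tfrac12\beta\langle u,u\rangle\bigr) = \beta u_m$. For the logarithmic term, the chain rule together with $\frac{\partial}{\partial u_m} Z(u) = \beta\exp(\beta u_m + h\delta_{m,1})$ yields
$$
\frac{\partial}{\partial u_m}\log Z(u) = \frac{\beta\exp(\beta u_m + h\delta_{m,1})}{\sum_{k=1}^q \exp(\beta u_k + h\delta_{k,1})}.
$$
Hence
$$
\frac{\partial}{\partial u_m} G_{\beta,h}(u) = \beta u_m - \frac{\beta\exp(\beta u_m + h\delta_{m,1})}{\sum_{k=1}^q \exp(\beta u_k + h\delta_{k,1})}.
$$

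Finally I would divide by $\beta$ and subtract from $u_m$, so that the $u_m$ terms cancel and exactly the claimed quotient remains. There is no real obstacle here; the only points to keep an eye on are the correct appearance of the factor $\beta$ from the exponent when differentiating $Z(u)$ and the sign in front of the logarithm in \eqref{Gfunktion}. This is the elementary algebraic identity underlying the computation \eqref{motivation} in the introduction (it is the case $h=0$ of that display), and it will be used repeatedly in Section 5.
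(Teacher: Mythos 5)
Your proposal is correct and coincides with the paper's own proof: a direct differentiation of $G_{\beta,h}$, splitting off the quadratic term $\beta u_m$ and applying the chain rule to $\log\bigl(\sum_{k=1}^q\exp(\beta u_k+h\delta_{k,1})\bigr)$, followed by rearranging. Nothing further is needed.
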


Direct calculation yields
$$
\frac{\partial}{\partial u_m}G_{\beta,h}(u)=\frac{\partial}{\partial u_m}\left(\frac{\beta}{2}\langle u,u\rangle -
\log\left(\sum\limits_{k=1}^q\exp\left(\beta u_k+h\delta_{k,1}\right)\right)\right)
=\beta u_m-\beta \frac{\exp\left(\beta u_m+h\delta_{m,1}\right)}{\sum\limits_{k=1}^q\exp\left(\beta u_k+h\delta_{k,1}\right)}.
$$
Rearranging the equality gives the result.

Using the notation $m(\sigma)=(m_1(\sigma),\ldots,m_q(\sigma))$ with
\begin{equation} \label{mit}
m_i(\sigma):=\frac{1}{n}\sum_{j=1}^n \delta_{\sigma_j,i} \quad \text{and} \quad
m_{i,t}(\sigma):=\frac{1}{n}\sum_{j \neq t}^n \delta_{\sigma_j,i}
\end{equation}
we obtain the following lemma.

\begin{lemma} \label{ConD}
For arbitrary $i\in \{1,\ldots,q\}$ we have
\begin{center}
$P_{\beta,h,n}\left(\sigma_j=i\mid (\sigma_t)_{t\neq j}\right)=
\begin{cases} \frac{\exp\left(\beta m_{i,j}(\sigma)\right)}{\sum\limits_{k=1}^q\exp\left(\beta m_{k,j}(\sigma)+h\delta_{k,1}\right)}, \quad i \in \{2,\ldots,q\}; \\ 
\frac{\exp\left(\beta m_{i,j}(\sigma)+h\right)}{\sum\limits_{k=1}^q\exp\left(\beta m_{k,j}(\sigma)+h\delta_{k,1}\right)}, \quad i=1. \end{cases}$
\end{center}
\end{lemma}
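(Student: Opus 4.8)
The plan is to compute the conditional probability directly from the definition \eqref{PBH}. Fix the site $j$ and condition on $(\sigma_t)_{t\neq j}$. Writing
$$
P_{\beta,h,n}\bigl(\sigma_j=i\mid (\sigma_t)_{t\neq j}\bigr)=\frac{P_{\beta,h,n}\bigl(\sigma_j=i,\,(\sigma_t)_{t\neq j}\bigr)}{\sum_{k=1}^q P_{\beta,h,n}\bigl(\sigma_j=k,\,(\sigma_t)_{t\neq j}\bigr)},
$$
the partition function $Z_{\beta,h,n}$ cancels, so the right-hand side is a ratio of the unnormalised Boltzmann weights $\exp\bigl(\frac{\beta}{2n}\sum_{1\le a\le b\le n}\delta_{\sigma_a,\sigma_b}+h\sum_{a=1}^n\delta_{\sigma_a,1}\bigr)$ evaluated at the configuration obtained by setting the $j$-th coordinate equal to $i$ (resp.\ $k$).

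Next I would separate the exponent into the part that does not depend on $\sigma_j$ and the part that does. All terms of $\sum_{1\le a\le b\le n}\delta_{\sigma_a,\sigma_b}$ that do not contain the index $j$, together with the field terms $h\sum_{a\neq j}\delta_{\sigma_a,1}$, are the same for every value of $\sigma_j$ and therefore cancel between numerator and denominator. The terms that do contain $j$ are precisely the pairs $(a,j)$ with $a<j$, the pairs $(j,b)$ with $b>j$, and the diagonal pair $(j,j)$; collecting them gives $\sum_{l\neq j}\delta_{\sigma_l,\sigma_j}$ plus the constant $\delta_{\sigma_j,\sigma_j}=1$ (which is again independent of the value of $\sigma_j$ and cancels), while the field contributes the single term $h\delta_{\sigma_j,1}$. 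Substituting $\sigma_j=i$ and invoking the definition $m_{i,j}(\sigma)=\frac1n\sum_{l\neq j}\delta_{\sigma_l,i}$ from \eqref{mit}, the $\sigma_j$-dependent part of the exponent becomes $\beta\,m_{i,j}(\sigma)+h\delta_{i,1}$.

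Assembling these observations yields
$$
P_{\beta,h,n}\bigl(\sigma_j=i\mid (\sigma_t)_{t\neq j}\bigr)=\frac{\exp\bigl(\beta\,m_{i,j}(\sigma)+h\delta_{i,1}\bigr)}{\sum_{k=1}^q\exp\bigl(\beta\,m_{k,j}(\sigma)+h\delta_{k,1}\bigr)},
$$
and distinguishing the case $i\in\{2,\ldots,q\}$, where $\delta_{i,1}=0$, from the case $i=1$, where $\delta_{i,1}=1$, gives exactly the two expressions in the statement. There is no real obstacle here; the only point requiring a little care is the bookkeeping of which pairs $(a,b)$ with $a\le b$ actually involve the fixed index $j$, so that one correctly recovers $\sum_{l\neq j}\delta_{\sigma_l,\sigma_j}$ plus an additive constant, after which everything reduces to cancelling $\sigma_j$-independent factors.
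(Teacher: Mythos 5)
Your overall strategy is exactly the paper's: write the conditional probability as a ratio of Boltzmann weights, cancel $Z_{\beta,h,n}$ together with all factors not involving $\sigma_j$, and read off the remaining exponent, distinguishing $i=1$ from $i\in\{2,\ldots,q\}$.

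The one place where your argument does not close is the final identification of the $\sigma_j$-dependent part of the exponent. By your own bookkeeping, the terms of $\sum_{1\le a\le b\le n}\delta_{\sigma_a,\sigma_b}$ that involve the site $j$ contribute $\sum_{l\neq j}\delta_{\sigma_l,\sigma_j}+1$, and this whole block carries the prefactor $\frac{\beta}{2n}$ from \eqref{PBH}; setting $\sigma_j=i$ therefore gives $\frac{\beta}{2n}\sum_{l\neq j}\delta_{\sigma_l,i}=\frac{\beta}{2}\,m_{i,j}(\sigma)$, not $\beta\, m_{i,j}(\sigma)$ as you assert, so the $\tfrac12$ is dropped silently. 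The statement of the lemma (and everything downstream built on $G_{\beta,h}$) corresponds to the standard Curie--Weiss--Potts convention in which the interaction is read as the unrestricted double sum $\frac{\beta}{2n}\sum_{a,b=1}^n\delta_{\sigma_a,\sigma_b}$, so that each off-diagonal pair $\{l,j\}$ is counted twice and the cross terms appear with coefficient $\frac{\beta}{n}$; this is precisely how the paper's own proof writes the exponent, namely $\frac{\beta}{2n}\sum_{l,t\neq j}\delta_{x_l,x_t}+\frac{\beta}{2n}+\frac{\beta}{n}\sum_{l\neq j}\delta_{x_l,i}+h\sum_{l\neq j}\delta_{x_l,1}+h\delta_{i,1}$. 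So either state this reading of \eqref{PBH} explicitly before cancelling, or your derivation as written proves the lemma with $\beta$ replaced by $\beta/2$ in the exponent. Apart from this factor-of-two bookkeeping, the cancellation argument and the case split are correct and identical to the paper's proof.
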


\begin{proof}
For $x_1,\ldots,x_n \in \{1,\ldots,q\}$ we have
$$
P_{\beta,h,n}\left(\sigma_j=i\mid (\sigma_t)_{t\neq j}\right) =  
\frac{P_{\beta,h,n}\left(\{\sigma_j=i\}\cap \{(\sigma_t)_{t\neq j}\}\right)}{P_{\beta,h,n}\left(\{(\sigma_t)_{t\neq j}\}\right)}.
$$
For any fixed $x_1, \ldots, x_{j-1}, x_{j+1}, \ldots, x_n$ we obtain
\begin{eqnarray*}
& & \frac{P_{\beta,h,n}\left(\sigma_1=x_1,\ldots,\sigma_j=i,\ldots,\sigma_n=x_n\right)}{P_{\beta,h,n}
\left(\sigma_1=x_1,\ldots,\sigma_{j-1}=x_{j-1},\sigma_{j+1}=x_{j+1},\ldots,\sigma_n=x_n\right)}\\
&=& \frac{Z_{\beta,h,n}^{-1}\exp\left(\frac{\beta}{2n}\sum\limits_{l,t\neq j}^n\delta_{x_l,x_t}+\frac{\beta}{2n}+
\frac{\beta}{n}\sum\limits_{l\neq j}^n\delta_{x_l,i}+h\sum\limits_{l\neq j}^n
\delta_{x_l,1}+h\delta_{i,1}\right)}{\sum\limits_{k=1}^qZ_{\beta,h,n}^{-1}\exp\left(\frac{\beta}{2n}
\sum\limits_{l,t\neq j}^n\delta_{x_l,x_t}+\frac{\beta}{2n}+\frac{\beta}{n}
\sum\limits_{l\neq j}^n\delta_{x_l,k}+h\sum\limits_{l\neq j}^n\delta_{x_l,1}+h\delta_{k,1}\right)}.
\end{eqnarray*}
Canceling equivalent terms in the numerator and denominator and finally distinguishing between $i=1$ and $i\neq 1$ yields the result.
\end{proof}

In the case $h=0$ in \cite[Proposition 2.2]{Ellis/Wang:1990} it is proven that the Hessian $D^2 G_{\beta,0}(x_0)$ of $G_{\beta,0}$
is positive definite if $x_0$ is a global minimum point, and hence invertible. In \cite[Lemma 2]{Wang:1994} it is stated that 
$D^2 G_{\beta,h}(x_0)$ is positive definite for any $\beta > 0$ and $h \geq 0$, if $x_0$ is a global minimum point. 
However this result is not correct. The non-degeneracy of $G_{\beta,h}$ at its minimum points for any $(\beta,h) \not= (\beta_0, h_0)$ is stated
next and will be proven in the Appendix.

\begin{lemma} \label{wangimprovement}
For all $q>2$ let $x_s\in\mathbb{R}^q$ denote a global minimum point of $G_{\beta,h}$. Then 
$D^2 G_{\beta,h}(x_s)$ is positive definite for any $(\beta,h)\neq (\beta_0,h_0)$.
\end{lemma}

For the rescaled empirical spin vector of the Curie-Weiss-Potts model, appearing in Theorems \ref{THUM}, \ref{THUM2} and \ref{THMM}, we can bound higher
order moments as follows.

\begin{lemma} \label{EnMo}
For $W=(W_1,\ldots,W_q)$ as in Theorems \ref{THUM}, \ref{THUM2} and for $W^{(i)}$ defined in \eqref{Wi} in Theorem \ref{THMM}  we obtain that for any $l\in \mathbb{N}$ and $j\in\{1,\ldots,q\}$
$$
\mathbb{E} \big| W_j^l \big| \leq \text{const.} (l), \quad \mathbb{E} \big| \bigl( W_j^{(i)} \bigr)^l \big| \leq \text{const.} (l).
$$
\end{lemma}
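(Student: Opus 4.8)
The plan is to combine the Hubbard--Stratonovich representation of the law of $L_n=N/n$ with the non-degeneracy of $G_{\beta,h}$ at its global minimizers (Lemma~\ref{wangimprovement}) and a crude large-deviation tail bound. The argument is identical for $W$ in Theorems~\ref{THUM}/\ref{THUM2} and for each $W^{(i)}$ under the conditional measure in Theorem~\ref{THMM}; I describe it for $W$ and indicate the minor changes for $W^{(i)}$ at the end.

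First I would invoke the Hubbard--Stratonovich transform, as in \cite[Lemma 3.2]{Ellis/Wang:1990}: writing $\sum_{1\le i\le j\le n}\delta_{\sigma_i,\sigma_j}$ in terms of $\sum_k N_k^2$ and applying the Gaussian identity $\exp(\tfrac c2|N|^2)\propto\int\exp(-\tfrac c2|y|^2+c\langle N,y\rangle)\,dy$, one gets, after the natural rescaling, that $L_n$ convolved with an independent centered Gaussian vector $\eta_n$ supported on $\mathcal M$ with $\E|\eta_{n,j}|^l\le\text{const.}(l)\,n^{-l/2}$ has, on the hyperplane $\{u\in\R^q:\sum_iu_i=1\}$, a Lebesgue density proportional to $\exp(-n\,G_{\beta,h}(u))$. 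Equivalently, $\widetilde W:=W+\sqrt n\,\eta_n$ has on $\mathcal M$ a density proportional to $\exp\bigl(-n[G_{\beta,h}(x_0+w/\sqrt n)-G_{\beta,h}(x_0)]\bigr)$.

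Next comes a local Gaussian domination together with a tail estimate. Since $x_0$ is a critical point of $G_{\beta,h}$, Taylor expansion gives
\[
n\bigl[G_{\beta,h}(x_0+w/\sqrt n)-G_{\beta,h}(x_0)\bigr]=\tfrac12\,w^{t}D^{2}G_{\beta,h}(x_0)\,w+O\!\bigl(|w|^{3}/\sqrt n\bigr),
\]
the cubic remainder being controlled by the boundedness of the third derivatives of $G_{\beta,h}$ near $x_0$. The hypothesis of Theorems~\ref{THUM}/\ref{THUM2} --- respectively the presence of several minimizers in Theorem~\ref{THMM} --- forces $(\beta,h)\neq(\beta_0,h_0)$, so by Lemma~\ref{wangimprovement} the matrix $D^{2}G_{\beta,h}(x_0)$ is positive definite; hence there are $c,\epsilon>0$ such that for $n$ large the right-hand side is $\ge c|w|^{2}$ on $\{|w|\le\epsilon\sqrt n\}$, and the density of $\widetilde W$ is dominated there by $C\exp(-c|w|^{2})$. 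On the complement $\{|w|>\epsilon\sqrt n\}$, i.e.\ $|L_n-x_0|>\epsilon$, the function $G_{\beta,h}$ is continuous with unique global minimum $x_0$ and is coercive (indeed $G_{\beta,h}(u)\ge\tfrac\beta2|u|^{2}-\beta|u|-\log q-h$), so $G_{\beta,h}(u)-G_{\beta,h}(x_0)$ is bounded below there by a positive constant and grows quadratically at infinity; the density of $\widetilde W$ is therefore super-exponentially small in $n$ on that region, and $\int_{\{|w|>\epsilon\sqrt n\}}|w_j|^{l}\,(\text{density})\,dw\to0$. Adding the two contributions yields $\E|\widetilde W_j|^{l}\le C\int_{\mathcal M}|w_j|^{l}e^{-c|w|^{2}}\,dw+o(1)=\text{const.}(l)$, and finally, since $\sqrt n\,\eta_n$ has moments bounded uniformly in $n$,
\[
\E|W_j|^{l}\le 2^{l-1}\bigl(\E|\widetilde W_j|^{l}+\E|\sqrt n\,\eta_{n,j}|^{l}\bigr)\le\text{const.}(l).
\]

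For $W^{(i)}$ one repeats the computation under $P_{\beta,h,n}(\cdot\mid L_n\in B(x_i,\epsilon))$: the Hubbard--Stratonovich density restricted to $B(x_i,\epsilon)$ and renormalized is proportional to $\exp(-n[G_{\beta,h}(x_i+\cdot/\sqrt n)-G_{\beta,h}(x_i)])$, the normalizing factor $P_{\beta,h,n}(L_n\in B(x_i,\epsilon))$ stays bounded below uniformly in $n$ (by a Laplace evaluation each of the finitely many minimizers carries a bounded-below share of the total mass), and, $\epsilon$ being smaller than half the distance between minimizers, $x_i$ is the unique minimum of $G_{\beta,h}$ on $\overline{B(x_i,\epsilon)}$ with positive definite Hessian; no splitting is needed here since $|W^{(i)}|<\epsilon\sqrt n$ automatically. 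The step I expect to be the main obstacle is the Hubbard--Stratonovich bookkeeping --- getting the rescaling right so that the density is exactly proportional to $\exp(-nG_{\beta,h})$ --- together with the uniform lower bound on $P_{\beta,h,n}(L_n\in B(x_i,\epsilon))$; once these are in hand the rest is a routine Gaussian-moment computation.
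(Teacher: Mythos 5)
Your proposal is correct and follows essentially the same route as the paper's proof: the Hubbard--Stratonovich transform expressing the law of $W$ (convolved with an independent Gaussian) through a density proportional to $\exp[-nG_{\beta,h}(x_0+y/\sqrt n)]$, a second-order Taylor expansion at the minimizer, and positive definiteness of $D^2G_{\beta,h}(x_0)$ via Lemma \ref{wangimprovement}. You merely make explicit the tail estimate away from the minimizer and the normalization of the conditional measure in the multi-minimizer case, details the paper subsumes under ``up to negligible terms.''
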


\begin{proof}
We consider a well known transformation, sometimes called the {\it Hubbard-Stratonovich transformation}, expressing the distribution
of $L_n$ in the Curie-Weiss-Potts model in terms of $G_{\beta,h}$. For $\beta >0$ we pick a random vector $Y$ in a way 
that $\mathcal{L}(Y)$ equals a $q$-dimensional centered Gaussian vector with covariance matrix $\beta^{-1} \text{Id}$ and $Y$ is chosen to be independent from $N$. 
$\text{Id}$ denotes the $q\times q$ identity matrix. According to a simple adaption of Lemma 3.2 in \cite{Ellis/Wang:1990}, for any point $m\in\mathbb{R}^q$ and 
$\gamma\in\mathbb{R}$ and any $n\in\mathbb{N}$ we have
$$
\mathcal{L}\left(\frac{Y}{n^{1/2-\gamma}}+\frac{n(N/n-m)}{n^{1-\gamma}}\right) 
= \exp\left[-n G_{\beta,h}\left(m+\frac{y}{n^{\gamma}}\right)\right]dy
\, \left(\int\limits_{\mathbb{R}^q}\exp\left[-n G_{\beta,h} \left(m+\frac{y}{n^{\gamma}}\right)\right]dy\right)^{-1}.
$$
Lemma 3.2 in \cite{Ellis/Wang:1990} presented this identity only for $h=0$. The calculations for any $h \not= 0$ are omitted.
Applying this result for $\gamma=\frac{1}{2}$ and $m=x_0$ (or any other minimum point of $G_{\beta,h}$) does not change the finiteness of any of the moments of the $W_i$. 
Thus, the new measure has the density
$$
\exp\left[-n G_{\beta,h}\left(x_0+\frac{y}{n^{1/2}}\right)\right]dy\left(\int\limits_{\mathbb{R}^q}\exp\left[-n G_{\beta,h}\left(x_0+\frac{y}{n^{1/2}}\right)\right]dy\right)^{-1}.
$$
Using second order multivariate Taylor expansion of $G_{\beta,h}$ and the fact that 
$x_0$ is a global minimum point of $G_{\beta,h}$ we see that the density of the new measure with respect to Lebesgue measure is given by 
$\text{const.} \exp\left[- \frac 12 \langle y,D^2 G_{\beta,h}(x_0) \, y \rangle \right]$
(up to negligible terms). With Lemma \ref{wangimprovement} we know that for any $(\beta,h) \not= (\beta_0,h_0)$ the Hessian is positive definite, if $x_0$ is a global minimum point. This fact combined with the transformation of integrals yields 
that a measure with this density has moments of any finite order.
\end{proof}

For the random variables $T$ and $V$ in Theorems \ref{TT} and \ref{TV}, we can bound higher order moments as well.

\begin{lemma} \label{EnMo2}
Consider the extremity $(\beta,h)=(\beta_0,h_0)$. For $T$ and $V$ as in \eqref{defTV} we obtain that for any $l\in \mathbb{N}$ and $j\in\{1,\ldots,q\}$
$$
\mathbb{E} \big| V_j^l \big| \leq \text{const.} (l), \quad \mathbb{E} | T^l | \leq \text{const.} (l).
$$
\end{lemma}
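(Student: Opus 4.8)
The plan is to mimic the proof of Lemma~\ref{EnMo}, replacing the second-order Taylor expansion of $G_{\beta,h}$ at its minimizer by a fourth-order one and replacing the isotropic $\sqrt n$-rescaling by the anisotropic rescaling built into \eqref{defTV}. Recall that at $(\beta_0,h_0)$ the function $G_{\beta_0,h_0}$ has the unique minimizer $x=(1/2,1/(2(q-1)),\ldots,1/(2(q-1)))$ and that, by Lemma~\ref{wangimprovement}, $(\beta_0,h_0)$ is precisely the point where $D^2G_{\beta_0,h_0}(x)$ degenerates. Since the fluctuations live in $\mathcal{M}$, the relevant fact is that the kernel of $D^2G_{\beta_0,h_0}(x)$ inside $\mathcal{M}$ is the one-dimensional line $\mathbb{R}u$, $u=(1-q,1,\ldots,1)$, while the restriction $Q$ of $D^2G_{\beta_0,h_0}(x)$ to $\mathcal{M}\cap u^{\bot}$ is positive definite; both statements are part of the phase-diagram analysis collected in \cite{Gandolfo:2010}. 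Writing the fluctuation $y:=N/n-x\in\mathcal{M}$ as $y=s\,u+w$ with $w\in\mathcal{M}\cap u^{\bot}$, the definition \eqref{defTV} reads $T=n^{1/4}s$ and $V=n^{1/2}w$.

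First I would set up, exactly as in the proof of Lemma~\ref{EnMo}, the Hubbard--Stratonovich representation (the adaptation of \cite[Lemma 3.2]{Ellis/Wang:1990} to $h\neq 0$), but now with the two different rescalings $s\mapsto s/n^{1/4}$ and $w\mapsto w/n^{1/2}$ in the decomposition $\mathcal{M}=\mathbb{R}u\oplus(\mathcal{M}\cap u^{\bot})$, in the spirit of \cite{Ellis/Newman:1978}. After this change of variables the joint law of $(T,V)$ has, up to a normalizing constant and up to convolution with an independent centered Gaussian that has moments of every order, a density with respect to Lebesgue measure proportional to
$$
\exp\!\Big[-n\,G_{\beta_0,h_0}\!\Big(x+\tfrac{s}{n^{1/4}}u+\tfrac{w}{n^{1/2}}\Big)\Big].
$$
As in Lemma~\ref{EnMo}, the auxiliary Gaussian does not affect finiteness of moments, so it is enough to bound $\E|T^l|$ and $\E|V_j^l|$ against this density. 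Then I would Taylor-expand $n\,G_{\beta_0,h_0}(x+\cdot)$ to fourth order around $x$, using $\nabla G_{\beta_0,h_0}(x)=0$ and $D^2G_{\beta_0,h_0}(x)u=0$. The quadratic term collapses to $\tfrac12\langle w,Qw\rangle$ (no $s$-dependence and no cross term, since $D^2G_{\beta_0,h_0}(x)$ annihilates $u$). The potentially dangerous cubic contribution $n^{1/4}D^3G_{\beta_0,h_0}(x)[u,u,u]\,s^3$ vanishes: this is the structural fact behind the quartic limit law in \cite[Theorem 3.7]{Gandolfo:2010} (heuristically, $(\beta_0,h_0)$ is the endpoint where the two minima along the critical line merge, forcing a quartic rather than a cubic degeneracy), and, by invariance under permutations of the coordinates $2,\ldots,q$, the same symmetry forces $D^3G_{\beta_0,h_0}(x)[u,u,w]=0$ for every $w\in\mathcal{M}\cap u^{\bot}$, so no order-$1$ cross term survives either. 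The surviving order-$1$ quartic contribution is $\tfrac1{24}D^4G_{\beta_0,h_0}(x)[u,u,u,u]\,s^4$, whose coefficient is strictly positive (the limiting density is proportional to $\exp(-\tfrac{4(q-1)^4}{3}t^4)$; see the Remark after Theorem~\ref{TT}). All remaining Taylor terms carry a negative power of $n$ and tend to $0$ on compact sets, while away from $x$ the exponent $-n\,G_{\beta_0,h_0}$ is exponentially negative because $x$ is the unique global minimizer; this makes the Laplace-type estimate rigorous and, crucially, uniform in $n$. Consequently, for all large $n$ the density of $(T,V)$ is bounded by a constant multiple of $\exp\big[-c(\|w\|^2+s^4)\big]$ for some $c>0$, an integrable function with finite moments of every order; integrating $|V_j|^l$ and $|T|^l$ against this majorant yields the asserted bounds $\mathrm{const.}(l)$.

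The main obstacle is the anisotropic Laplace analysis: one has to verify that the fourth-order expansion together with the two distinct rescalings produces a limiting exponent that genuinely decouples into a nondegenerate Gaussian in the $V$-variables and a purely quartic potential in $T$ (for which the vanishing of $D^3G_{\beta_0,h_0}(x)[u,u,u]$ and of the mixed third derivatives is essential), and that the discarded higher-order and boundary contributions are controlled uniformly in $n$, so that a single integrable majorant serves for all large $n$. The algebra identifying the degenerate direction, the vanishing of the relevant cubic term, and the sign of the quartic coefficient is precisely what is carried out in \cite[Theorem 3.7]{Gandolfo:2010} and in Lemma~\ref{wangimprovement}, so I would quote those rather than recompute them.
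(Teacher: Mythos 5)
Your proposal is correct in substance, but it organizes the argument differently from the paper. The paper splits the claim into two separate one-block Hubbard--Stratonovich arguments: for $\bar V=(V_2,\ldots,V_q)$ it reduces to $(W_2,\ldots,W_q)$ and works with the restriction $G_{q-1}$ of $G_{\beta_0,h_0}$ to the last $q-1$ coordinates (first coordinate frozen at $1/2$), a second-order expansion and the explicit positive definite Hessian $D^2G_{q-1}(x)=\frac{4}{q^2}\bigl(\1_{q-1}+(q-1)(q-2)\,\mathrm{Id}_{q-1}\bigr)$; for $T$ it uses the one-dimensional restriction $G_1$ with the scaling $\gamma=1/4$ and a fourth-order expansion as in \eqref{huebsch}, omitting the Laplace details. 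You instead treat the joint law of $(T,V)$ through the intrinsic decomposition $\mathcal{M}=\mathbb{R}u\oplus(\mathcal{M}\cap u^{\bot})$, using that $D^2G_{\beta_0,h_0}(x)\,u=0$ (immediate from the entries in Remark \ref{extrem}), that the quadratic form of $D^2G_{\beta_0,h_0}(x)$ on $w\in\mathcal{M}\cap u^{\bot}$ equals $\frac{4(q-1)(q-2)}{q^2}\|w\|^2>0$, that $D^3G[u,u,u]=0$ (as behind \eqref{huebsch}) and $D^3G[u,u,w]=0$ by the permutation symmetry in the coordinates $2,\ldots,q$ together with $w_1=0$, $\sum_{k\geq 2}w_k=0$, and that the quartic coefficient along $u$ is positive. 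This costs you the extra algebra of the mixed cubic term and a genuinely anisotropic, uniform-in-$n$ Laplace estimate (which you rightly flag as the main work, and which the paper does not spell out either), but it buys a single integrable majorant $\exp\bigl[-c(\|w\|^2+s^4)\bigr]$ giving both moment bounds at once, and it keeps explicit track of the coupling between the degenerate direction and its orthogonal complement. That coupling is precisely what the paper's shortcut glosses over: for $j\geq 2$ one has $W_j=V_j+n^{1/4}T$, so the marginal of $(W_2,\ldots,W_q)$ does not in fact have $n$-uniformly bounded moments, and the frozen-first-coordinate density $\exp[-nG_{q-1}(x+y/\sqrt n)]$ is not the exact marginal obtained from the $q$-dimensional transform (integrating out the first coordinate reintroduces the flat direction $u$); the cancellation producing bounded moments for $V_j$ is exactly what your $u$/$u^{\bot}$ decomposition makes explicit, and in this respect your route is closer to the joint analysis in \cite{Gandolfo:2010} and \cite{Ellis/Newman:1978} than to the paper's own two-step argument.
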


\begin{proof}
Note that with $V \in {\mathcal M} \cap u^{\bot}$ we obtain $V_1=0$. Hence $W_1 = (1-q) n^{1/4} T$. Therefore
$$
V = \frac{1}{n^{1/2}} \bigl( N-nx-n^{3/4}Tu \bigr) = W- n^{1/4} Tu = (0, W_2 + \frac{1}{(q-1)} W_1, \ldots, W_q + \frac{1}{(q-1)} W_1).
$$
Since $\bar{W} :=(W_1, \ldots, W_q) \in {\mathcal M}$, we have $W_1 = - \sum_{k=2}^q W_k$. 
We try to check that $\bar{V} := (V_2,\ldots, V_q)$ has finite moments. Thus is suffices to check, that $(W_2, \ldots, W_q)$ has
finite moments. Now we define $G_{q-1}(x)$, $x \in \R^{q-1}$, to be the restriction of $G_{\beta_0,h_0}$ on the last $q-1$ coordinates (the first coordinate will be fixed to $1/2$ in the sequel).
Again we apply the  Hubbard-Stratonovich transformation introduced in the proof of Lemma \ref{EnMo}. We choose a $q-1$ dimensional Gaussian vector $Y$
with covariance matrix $\beta_0^{-1} \text{Id}_{q-1}$ and independent of $\bar{W}$.  With $x=( 1/2(q-1), \ldots, 1/2(q-1)) \in \R^{q-1}$
we have that the law of $Y + \bar{W}$ has the density
$$
\exp\left[-n G_{q-1} \left( x+\frac{y}{n^{1/2}} \right) \right] dy \, \left( \int\limits_{\mathbb{R}^{q-1}}
\exp\left[-n G_{q-1} \left( x+\frac{y}{n^{1/2}} \right) \right] dy \right)^{-1}.
$$
Using second order multivariate Taylor expansion of $G_{q-1}$ and the fact that $(\nabla G_{q-1})(x)=0$ ( $(1/2,x) \in \R^q$ is a global minimum point of $G_{\beta_0,h_0}$),
we see that the density of the new measure with respect to Lebesgue measure is given by 
$\text{const.} \exp\left[- \frac 12 \langle y,D^2 G_{q-1}(x) \, y \rangle \right]$ (up to negligible terms). 
Using the formulas for the second partial derivatives of $G_{\beta_0,h_0}$, see Remark \ref{extrem} in the Appendix,
we obtain that
$$
D^2 G_{q-1}(x) = \frac{4}{q^2} \bigl( \1_{q-1} + \text{Id}_{q-1} (q-1)(q-2) \bigr),
$$ 
where $\1_{q-1}$ denotes the $(q-1) \times (q-1)$ matrix with all entries equal to 1. It is an immediate computation that
$$
\det(D^2 G_{q-1}(x)) = \biggl( \frac{4(q-1)(q-2)}{q^2} \biggr)^{q-2} \biggl(  \frac{4(q-1)(q-2)}{q^2} + \frac{4(q-1)}{q^2} \biggr),
$$
which shows the invertibility of $D^2 G_{q-1}(x)$ for any $q \geq 3$.
Thus $D^2 G_{q-1}(x)$ is positive definite. This fact combined with the transformation of integrals yields 
that a measure with this density has moments of any finite order.

For $(1-q)T= n^{-1/4} W_1$ we apply the Hubbard-Stratonovich transform with $\gamma = 1/4$.
Take a Gaussian random variable with expectation zero and 
variance $\beta_0^{-1}$, independent of $W_1$. The distribution of $n^{-1/4} Y + T$ 
has a density proportional to $\exp( -n G_1 \bigl( c_q 1/2 + y/n^{1/4} \bigr) \bigr)$ with some constant $c_q$
only depending on $q$ and $G_1$ being the restriction of $G_{\beta_0,h_0}$ to the first component. A fourth
order Taylor expansion similar to \eqref{huebsch} will give $G_1(x+t) = G_1(x) + \frac{1}{24} G_1^{(4)} (x + \alpha t) t^4$ for
some $\alpha \in (0,1)$. Hence we conclude that a measure with a Lebesgue-density given by $\text{const.} \exp( -y^4)$ has moments of any finite order.
We omit the details.
\end{proof}

\section{Proofs of the theorems}

Constructing an exchangeable pair in the Curie-Weiss-Potts model to obtain an approximate linear regression property \eqref{regressioncond}
leads us to the function $G_{\beta, h}$. 
Let $q >2$, $h=0$ and $\beta < \beta_c$, and let $x_0$ denote the unique global minimum point of $G_{\beta, 0}$, see Theorem \ref{Minima}.

We produce a spin collection $\sigma'=(\sigma'_i)_{i\geq 1}$ via a {\it Gibbs sampling procedure}. Let $I$ be uniformly distributed over $\{1,\ldots,n\}$ and 
independent from all other random variables involved. We will now replace the spin $\sigma_j$ by $\sigma'_j$ drawn from the conditional distribution 
of the i'th coordinate given $(\sigma_t)_{t\neq j}$, independently from $\sigma_j$. We define
$$
Y_j:=(Y_{j,1},\ldots,Y_{j,q})^t := (\delta_{\sigma_j,1},\ldots,\delta_{\sigma_j,q})^t
$$
and consider
\begin{equation} \label{WStr}
W' := W-\frac{Y_I}{\sqrt{n}}+\frac{Y'_I}{\sqrt{n}}.
\end{equation}
Hence it is not hard to see that $(W,W')$ is an exchangeable pair. This construction will also be evident for all the proofs in this section.
Let $\mathcal{F}:=\sigma(\sigma_1,\ldots,\sigma_n)$. We obtain
\begin{align}
\mathbb{E}\left[W'_i-W_i\mid \mathcal{F}\right]
&=\frac{1}{\sqrt{n}}\mathbb{E}\left[Y'_{I,i}-Y_{I,i}\mid \mathcal{F}\right]\nonumber\\
&=\frac{1}{\sqrt{n}}\frac{1}{n}\sum\limits_{j=1}^n\mathbb{E}\left[Y'_{j,i}-Y_{j,i}\mid \mathcal{F}\right]\nonumber\\
&=-\frac{1}{\sqrt{n}}\frac{1}{n}\sum\limits_{j=1}^nY_{j,i}+
\frac{1}{\sqrt{n}}\frac{1}{n}\sum\limits_{j=1}^n\mathbb{E}\left[\delta_{\sigma'_j,i}\mid \mathcal{F}\right]. \nonumber
\end{align}
Using our construction we obtain with Lemma \ref{ConD} 
$$ \E \left[\delta_{\sigma'_j,i}\mid \mathcal{F}\right] =\mathbb{E}\left[\delta_{\sigma_j,i}\mid (\sigma_t)_{t\neq j}\right]
=P_{\beta,0,n}\left(\sigma_j=i\mid (\sigma_t)_{t\neq j}\right) = \frac{\exp\left(\beta m_{i,j}(\sigma)\right)}{\sum\limits_{k=1}^q\exp\left(\beta m_{k,j}(\sigma)\right)},
$$
with $m_{i,j}(\sigma) = \frac 1n \sum_{l \not= j}^n \delta_{\sigma_l,i}$. 
With Lemma \ref{expGD} we obtain for any $i=1, \ldots, q$
\begin{eqnarray} \label{motivation2}
\E [ W_i' - W_i | \mathcal{F} ] & = &  - \frac 1n W_i - \frac{x_{0,i}}{\sqrt{n}} + R_n^{(1)}(i) + \frac{1}{\sqrt{n}} \biggl( m_i(\sigma) - 
\frac{1}{\beta} \frac{\partial}{\partial u_i} G_{\beta,0} (m(\sigma)) \biggr) \nonumber \\ 
& = & -\frac{1}{\sqrt{n}} \frac{1}{\beta} \frac{\partial}{\partial u_i} G_{\beta,0} (m(\sigma)) +  R_n^{(1)}(i)
\end{eqnarray}
with 
\begin{equation} \label{R1i}
R_n^{(1)}(i) := \frac{1}{\sqrt{n}}\frac{1}{n}\sum\limits_{j=1}^n \left[\frac{\exp\left(\beta m_{i,j}(\sigma)\right)}{\sum\limits_{k=1}^q
\exp\left(\beta m_{k,j}(\sigma)\right)}-\frac{\exp\left(\beta m_{i}(\sigma)\right)}{\sum\limits_{k=1}^q\exp\left(\beta m_{k}(\sigma)\right)}\right],
\end{equation}
where $m_i(\sigma)$ and $m_{i,j}(\sigma)$ are defined as in \eqref{mit}.
We have used 
\begin{equation} \label{easy}
m_i(\sigma) - x_{0,i} = \frac{W_i}{\sqrt{n}}.
\end{equation}

\begin{proof}[Proof of Theorem \ref{THUM}]
Our goal is to apply Theorem \ref{RR}. 
We will first of all deal with the case $h =0$. Hence by Theorem \ref{Minima} we
have $\beta < \beta_c$ and $x_0=( 1/q, \ldots, 1/q)$ being the unique minimum point of $G_{\beta,0}$.
We apply \eqref{taylorsecond} (see Appendix) to the first summand in \eqref{motivation2}.  Since $x_0$ is a global minimum of 
$ G_{\beta,0}$ we have $ \bigl( \frac{\partial}{\partial u_i} G_{\beta,0} \bigr)(x_0)=0$.
Hence the first summand in \eqref{motivation2} is equal to 
$$
- \frac{1}{\beta \, n} \left( \frac{\partial^2}{\partial^2 u_i} G_{\beta,0} \right) (x_0) \, W_i 
- \frac{1}{\beta \, n} \sum \limits_{k\neq i} \left( \frac{\partial^2}{\partial u_i\partial u_k} G_{\beta,0} \right) (x_0) \, W_k + R_n^{(2)}(i)
$$
with 
\begin{equation} \label{R2i}
R_n^{(2)}(i) : =\mathcal{O}\left(\frac{1}{\sqrt{n}}\left(\frac{W_i}{\sqrt{n}}\right)^2\right)-\sum\limits_{k\neq i}\mathcal{O}
\left(\frac{1}{\sqrt{n}}\frac{W_i}{\sqrt{n}}\frac{W_k}{\sqrt{n}}\right)-\sum\limits_{k,t\neq i}\mathcal{O}
\left(\frac{1}{\sqrt{n}}\frac{W_k}{\sqrt{n}}\frac{W_t}{\sqrt{n}}\right).
\end{equation}
Summarizing with $R(i) := R_n^{(1)}(i) + R_n^{(2)}(i)$ we have
\begin{eqnarray*} 
\mathbb{E}\left[W'_i-W_i\mid \mathcal{F}\right] & = &
- \frac{1}{\beta \, n} \left( \frac{\partial^2}{\partial^2 u_i} G_{\beta,0} \right) (x_0) \, W_i 
- \frac{1}{\beta \, n} \sum \limits_{k\neq i} \left( \frac{\partial^2}{\partial u_i\partial u_k} G_{\beta,0} \right) (x_0) \, W_k + R(i) \nonumber \\
& = & - \frac{1}{\beta \, n} \langle \bigl[ D^2 G_{\beta,0} (x_0) \bigr]_i , W \rangle + R(i), 
\end{eqnarray*}
where $\langle \cdot, \cdot \rangle$ denotes the Euclidean scalar-product and $\bigl[D^2 G_{\beta,0}(x_0)\bigr]_i$ the $i$-th row of the matrix $D^2 G_{\beta,0}(x_0)$.
We obtain
\begin{equation} \label{hzero}
\mathbb{E}\left[W'-W\mid \mathcal{F}\right] = - \frac{1}{\beta \, n} \bigl[ D^2 G_{\beta,0} (x_0) \bigr] W + R(W)
\end{equation}
with  $R(W) = (R(1), \ldots, R(q))$.
We define $\Lambda= \frac{1}{\beta \, n} \bigl[ D^2 G_{\beta,0} (x_0) \bigr]$. 
With \cite[Proposition 2.2]{Ellis/Wang:1990}, $D^2 G_{\beta,0}(\nu)$ is positive definite for any $\beta >0$ and any global minimum point
$\nu$ and therefore
$\Lambda$ is invertible (alternatively one easily sees that $\Lambda$ is a matrix of the form given in Lemma \ref{Matrix} and the determinant is
$\frac{1}{n^q} (1- \beta/q)^{q-2} (1 - \beta/q)$ which is non-zero because $\beta_c <q$ with $\beta_c$ given in \eqref{criticaltemp}, and therefore $\beta \not= q$,
see Lemma \ref{hessiancal}). 
Hence \eqref{regressioncond} is fulfilled and we are able to apply Theorem \ref{RR}. In order to calculate the bound given there 
we need to estimate $\lambda^{(i)}$ as well as the order of the terms 
$A,B$ and $C$. Note that often in an application of Theorem \ref{RR} it might be tedious to calculate $\Lambda$ (and $\Sigma$) and it is
not clear whether the calculations have been carried out correctly. In Remark \ref{heuristic}, we will point out, that there is a nice
heuristic in the Curie-Weiss-Potts model expecting $\Lambda$ as it comes out.

Obviously we have $\lambda^{(i)}=\mathcal{O}(n)$. We continue by estimating $C$ in Theorem \ref{RR}. First we consider $R_n^{(1)}(i)$ defined in \eqref{R1i}.
\begin{eqnarray*}
| R_n^{(1)}(i) | &\leq& \frac{1}{\sqrt{n}}\frac{1}{n}\sum\limits_{j=1}^n \bigg| 
\frac{\exp\left(\beta m_{i,j}(\sigma)\right)}{\sum\limits_{k=1}^q\exp\left(\beta m_{k,j}(\sigma)\right)}-
\frac{\exp\left(\beta m_{i}(\sigma)\right)}{\sum\limits_{k=1}^q\exp\left(\beta m_{k}(\sigma)\right)} \bigg| \\
&\leq& \frac{1}{\sqrt{n}}\frac{1}{n}\sum\limits_{j=1}^n\sum\limits_{k\neq i} \big| \exp\left(\beta m_{i,j}(\sigma)+\beta m_{k}(\sigma)\right)-
\exp\left(\beta m_{i}(\sigma)+\beta m_{k,j}(\sigma)\right) \big|. 
\end{eqnarray*}
Using the inequality
\begin{center}
$ \big| \exp(\alpha x)-\exp(\alpha y) \big| \leq \frac{|\alpha|}{2}(\exp(\alpha x)+\exp(\alpha y))\, |x-y|$  , for all $\alpha,x,y\in\mathbb{R}$,
\end{center}
we obtain
$$
| R_n^{(1)}(i) | \leq \frac{1}{\sqrt{n}}\frac{1}{n}\beta e^{2\beta}\sum\limits_{j=1}^n\sum\limits_{k\neq i}
\big| m_{i,j}(\sigma)+ m_{k}(\sigma)- m_{i}(\sigma) - m_{k,j}(\sigma) \big|. 
$$
Consider the first summand $j=1$. In case $\sigma_1=i$, we have for all $k\neq i$ that $m_{k,1}(\sigma)=m_{k}(\sigma)$, and therefore 
\begin{center}
$\sum\limits_{k\neq i} | m_{i,1}(\sigma)+ m_{k}(\sigma)- m_{i}(\sigma)- m_{k,1}(\sigma) | = (q-1) \frac{\delta_{\sigma_1,i}}{n}.$
\end{center}
If $\sigma_1\neq i$, then there is a $t\neq i$ with $m_{t,1}(\sigma)\neq m_t(\sigma)$ and for all $k\neq t$: $m_{k,1}(\sigma)=m_k(\sigma)$. 
By similar observation we have
\begin{center}
$\sum\limits_{k\neq i} |m_{i,1}(\sigma)+ m_{k}(\sigma)- m_{i}(\sigma)- m_{k,1}(\sigma) | \leq (q-1) \frac{\delta_{\sigma_1,t}}{n}.$
\end{center}
The same observation can be made for any other $j\in\{1,\ldots,n\}$. With $ | \delta_{\sigma_j,t} | \leq 1$ we get
$$
| R_n^{(1)}(i)| \leq \frac{1}{\sqrt{n}} \frac{1}{n} (q-1) \beta e^{2\beta} =\mathcal{O}(n^{-3/2}).
$$
Since $W \in {\mathcal M}$, see \eqref{hyperM}, we get $\sum_{k \not= i} W_k = - W_i$.
By Lemma \ref{EnMo} we know that $ \E |W_i^2| \leq \text{const.} (2)$ and therefore we obtain that $\mathbb{E} |R_n^{(2)}(i)|$ in \eqref{R2i} is $\mathcal{O}(n^{-3/2})$.
Thus the Cauchy-Schwartz inequality yields $\mathbb{E}[R(i)^2]=\mathcal{O}(n^{-3})$ for all $i\in \{1, \ldots, q\}$. We have
\begin{center}
$C=\sum\limits_{i=1}^q\lambda^{(i)}\sqrt{\mathbb{E}\left[R_i^2\right]}=\mathcal{O}(n^{-1/2}).$
\end{center}
The next thing we notice is that $|W_i'-W_i| =\frac{1}{\sqrt{n}} \big| Y_{I,i}'-Y_{I,i} \big| \leq \frac{1}{\sqrt{n}}$ for all $i$. 
Thus we easily obtain the bound $B=\mathcal{O}(n^{-1/2})$. It remains to calculate and to estimate the conditional variance in $A$. 
This is a bit more involved. We have
\begin{eqnarray} \label{ais}
\mathbb{E}[(W_i'-W_i)(W_j'-W_j) \mid \mathcal{F}]&=& \frac{1}{n^3}\sum\limits_{t,k=1}^nY_{k,i}Y_{t,j}+\frac{1}{n^3}
\sum\limits_{t,k=1}^n\mathbb{E}[Y'_{k,i}Y'_{t,j}\mid \mathcal{F}]\nonumber\\
& -& \frac{2}{n^3}\sum\limits_{t,k=1}^nY_{k,i}\mathbb{E}[Y'_{t,j}\mid \mathcal{F}]=:A_1+A_2+A_3. \nonumber
\end{eqnarray}
Hence we have to bound the variances of these terms. By definition $\mathbb{V}[A_1]=\frac{1}{n^2}\mathbb{V}[m_i(\sigma)\, m_j(\sigma)]$.
Now
\begin{eqnarray*}
\mathbb{V}[m_i(\sigma)\, m_j(\sigma)] & =& \mathbb{V} \bigl( \frac{W_i \, W_j}{n} + \frac{W_i}{\sqrt{n}} x_{0,j} + \frac{W_j}{\sqrt{n}} x_{0,i} \bigr) \\
& \leq & \text{const.} \max \bigl( \frac{1}{n^2} \V(W_i \, W_j), \frac 1n \V(W_i) \bigr) \leq \frac{\text{const.}}{n^2} \bigl( \E [ W_i^2 W_j^2] + n \E [W_i^2] \bigr).
\end{eqnarray*}
We make use of Lemma \ref{EnMo} to obtain $\mathbb{V}[m_i(\sigma)\, m_j(\sigma)] = {\mathcal O}(1/n)$ and hence $\V[A_1]= {\mathcal O}(n^{-3})$.
Using a conditional version of Jensen's inequality we have
$$
\mathbb{V}[A_2] \leq \mathbb{E} \bigl( \mathbb{V} \bigl[ \frac{1}{n^3} \sum\limits_{t,k=1}^n Y'_{k,i} Y'_{t,j} \bigr] \mid \mathcal{F} \bigr)
= \mathbb{E} \bigl( \mathbb{V} \bigl[ \frac{1}{n^3}\sum\limits_{t,k=1}^n Y_{k,i}Y_{t,j} \bigr] \mid \mathcal{F} \bigr) 
= \mathbb{V} \bigl( \frac{1}{n^3}\sum\limits_{t,k=1}^nY_{k,i}Y_{t,j} \bigr).
$$
Hence $\V[A_2] = \mathcal{O}(n^{-3})$.
With Lemma \ref{ConD} we get
\begin{eqnarray*}
- A_3/2  & = & \frac{1}{n^3} \sum\limits_{t,k=1}^n Y_{k,i}\, \mathbb{E}[Y'_{t,j}\mid \mathcal{F}] =\frac{1}{n^3}
\sum\limits_{t,k=1}^nY_{k,i} \frac{\exp\left(\beta m_{j,t}(\sigma)\right)}{\sum\limits_{l=1}^q\exp\left(\beta m_{l,t}(\sigma)\right)} \\
&=& \frac{1}{n^3} \sum \limits_{t,k=1}^nY_{k,i} \biggl( \frac{\exp\left(\beta m_{j,t}(\sigma)\right)}{\sum\limits_{l=1}^q\exp\left(\beta m_{l,t}(\sigma)\right)}
-\frac{\exp\left(\beta m_{j}(\sigma)\right)}{\sum\limits_{l=1}^q\exp\left(\beta m_{l}(\sigma)\right)} \biggr) 
+\frac{1}{n^2}\sum\limits_{k=1}^n Y_{k,i}\frac{\exp\left(\beta m_{j}(\sigma)\right)}{\sum\limits_{l=1}^q\exp\left(\beta m_{l}(\sigma)\right)}\\
& = &:M_1+M_2.
\end{eqnarray*}
By using the same estimations as for $R_n^{(1)}(i)$ we obtain
$$
M_1 \leq \frac{1}{n^3} \sum \limits_{t,k=1}^n Y_{k,i} \, (q-1) \beta e^{2\beta} = \frac{1}{n} \, (q-1) \beta e^{2 \beta} \bigl(\frac{W_i}{\sqrt{n}}+x_{0,i}\bigr).
$$
Hence $\V(M_1)= {\mathcal O}(n^{-3})$ by Lemma \ref{EnMo}. We obtain
\begin{eqnarray*}
M_2 & = & \frac 1n m_i(\sigma) \bigl( m_j(\sigma) - \frac{1}{\beta} \frac{\partial}{\partial u_j} G_{\beta,0} (m(\sigma)) \bigr) \\
& = & \frac 1n m_i(\sigma) \, m_j(\sigma) - \frac{1}{\beta n} m_i(\sigma) \biggl( \bigl( \frac{\partial^2}{\partial^2 u_j} G_{\beta,0}\bigr)(x_0) (m_j(\sigma) - x_{0,j})\\
& + & \sum_{k \not= j} \bigl( \frac{\partial^2}{\partial u_j u_k} G_{\beta,0} \bigr)(x_0) (m_k(\sigma) - x_{0,k}) + \sqrt{n} R_n^{(2)}(j) \biggr),
\end{eqnarray*}
where the first equality follows from Lemma \ref{expGD}, the second from \eqref{taylorsecond} and the definition of $R_n^{(2)}(j)$ in \eqref{R2i}.
Hence
$$
M_2 = {\mathcal O} \biggl( \frac 1n m_i(\sigma) \, m_j(\sigma) \biggr) +  {\mathcal O} \biggl( \frac 1n m_i(\sigma) \frac{W_j}{\sqrt{n}} \biggr) + {\mathcal O} \bigl(n^{-1/2}
\, R_n^{(2)}(j) \bigr).
$$
The first two summands are of order ${\mathcal O} \bigl( W_j / n^{3/2} \bigr)$ and the last term is of order ${\mathcal O}(n^{-2})$. Applying Lemma \ref{EnMo}, it
follows that the maximal variance of all the sums in the representation of $M_2$ is of order ${\mathcal O}(n^{-3})$ and therefore $\mathbb{V}(A_3)=\mathcal{O}(n^{-3})$. 
Thus the variance in $A$ of Theorem \ref{RR} can be bounded by 9 times the maximum of the variances of $A_1, A_2, A_3$, which is a constant times 
$n^{-3}$. Thus we obtain
\begin{center}
$A=\sum\limits_{i,j=1}^q\lambda^{(i)}\sqrt{\mathbb{V}\left[\mathbb{E}[(W'_i-W_i)(W'_j-W_j)\mid W]\right]}=\mathcal{O}(n^{-1/2})$.
\end{center}
This completes the proof for $h=0$. Note that we have used the fact that the fourth moment of $W_i$ is bounded. We did not need the finiteness of any higher
moment. We have proved a {\it fourth-moment} Theorem together with a rate of convergence of order $\mathcal{O}(n^{-1/2})$.

If $h \not=0$ we will slightly change the proof. Here are the details. By Theorem \ref{Minima} we know that for $h >0$ and $(\beta,h) \notin h_T$, the
function $G_{\beta,h}$ has a unique global minimum point. Let $x_0$ be the unique global minimum point. Analogously to the first part of our proof we obtain
\begin{eqnarray} \label{hnonzero}
\mathbb{E}\left[W'_i-W_i\mid \mathcal{F}\right] & = &
- \frac{1}{\beta \, n} \left( \frac{\partial^2}{\partial^2 u_i} G_{\beta,h} \right) (x_0) \, W_i 
- \frac{1}{\beta \, n} \sum \limits_{k\neq i} \left( \frac{\partial^2}{\partial u_i\partial u_k} G_{\beta,h} \right) (x_0) \, W_k + R(i,h) \nonumber \\
& = & - \frac{1}{\beta \, n} \langle \bigl[ D^2 G_{\beta,h} (x_0) \bigr]_i , W \rangle + R(i,h) 
\end{eqnarray}
with $R(i,h) := R_n^{(1)}(i,h) + R_n^{(2)}(i)$ with the new
$$
R_n^{(1)}(i,h) :=
\frac{1}{\sqrt{n}}\frac{1}{n}\sum\limits_{j=1}^n\left[\frac{\exp\left(\beta m_{i,j}(\sigma)+h\delta_{i,1}\right)}{\sum\limits_{k=1}^q\exp\left(\beta m_{k,j}(\sigma)+h\delta_{k,1}\right)}-\frac{\exp\left(\beta m_{i}(\sigma)+h\delta_{i,1}\right)}{\sum\limits_{k=1}^q\exp\left(\beta m_{k}(\sigma)+h\delta_{k,1}\right)}\right]
$$
and the same $R_n^{(2)}(i)$, given in \eqref{R2i}. Again $\Lambda = \frac{1}{\beta n}  \bigl[ D^2 G_{\beta,h}(x_0) \bigr]$. This matrix has a simple structure. 
With Lemma \ref{hessiancal} we obtain 
$$
a =  \frac{1}{\beta \, n} \left( \frac{\partial^2}{\partial^2 u_1} G_{\beta,h} \right) (x_0) = \frac{1-\beta(q-1)x_{0,1}x_{0,q}}{n}, \, \,
b = \frac{1}{\beta \, n} \left( \frac{\partial^2}{\partial u_1\partial u_q} G_{\beta,h} \right) (x_0) =\frac{\beta x_{0,1}x_{0,q}}{n}.
$$
Moreover
$$
d=  \frac{1}{\beta \, n} \left( \frac{\partial^2}{\partial^2 u_q} G_{\beta,h} \right) (x_0) = \frac{1-\beta(x_{0,1}x_{0,q}+(q-2)x_{0,q}^2)}{n}, \, \,
c=\frac{1}{\beta \, n} \left( \frac{\partial^2}{\partial u_2 u_q}G_{\beta,h}\right) (x_0)=\frac{\beta x_{0,q}^2}{n}.
$$
Hence $\Lambda$ has the form \eqref{Matrix} and 
according to Lemma \ref{DetLe} we have
$$
\det(\Lambda) =
\frac{1}{n^q}\left(1-\beta x_{0,q}\right)^{q-2}(1-q\beta x_{0,1}x_{0,q}).
$$
So if $\beta \notin \{\frac{1}{x_{0,q}},\frac{1}{qx_{0,1}x_{0,q}}\}$, the matrix $\Lambda$ is invertible.  
With Lemma \ref{wangimprovement} we get that $\Lambda$ is invertible for all 
$(\beta,h) \neq (\beta_0,h_0)$ and hence we are able to apply Theorem \ref{RR}.  
The bound of $R_n^{(1)}(i,h)$ is $e^h$ times the bound of $R_n^{(1)}(i)$ implying the same order of $C$. The proof of bounding $B$ is unchanged. Bounding
$A$ needs once more the bound $R_n^{(1)}(i,h)$ and hence the proof is almost the same as in the case $h=0$. 
\end{proof}
\begin{proof}[Proof of Theorem \ref{THUM2}]
Since the first part of the proof follows the lines of the proof of Theorem \ref{THUM} we notice that Theorem \ref{Kolmogorov} can be applied. Thus it remains to estimate the bound given there. For the first expression in the bound we notice that $A_1$ is the same expression as the A-term we just calculated for the proof of Theorem \ref{THUM}. Hence, $\log(n)A_1=\mathcal{O}\bigl(\log(n)n^{-1/2}\bigr)$. With Lemma \ref{EnMo} and the estimation for the C-term in Theorem \ref{THUM} we obtain that the second expression is $\mathcal{O}\bigl(\log(n)n^{-1/2}\bigr)$. For the third expression we notice that $a>1$ is a constant and that $A_3=\mathcal{O}(n)$. Using again Lemma \ref{EnMo} combined with the fact that $A=\frac{1}{\sqrt{n}}$ yields that the third expression is also $\mathcal{O}\bigl(\log(n)n^{-1/2}\bigr)$. Likewise we obtain that the fourth expression is $\mathcal{O}\bigl(n^{-1/2}\bigr)$. Combining these estimations yields the result.
\end{proof}

\begin{remark}[Heuristics] \label{heuristic}
By definition of $G_{\beta,h}$, \eqref{Gfunktion}, the Hessian of $G_{\beta,h}$ fulfills $D^2 G_{\beta,h}(x) = \beta \text{Id} - \beta^2 D^2 \Phi (x)$, where
$\Phi$ is the $\log$-moment generating function of the single-spin distribution in the Curie-Weiss-Potts model and $x$ is any minimum point. Hence $D^2 \Phi$ is the
covariance structure of the single-spins, which is
\begin{equation} \label{heusingle}
D^2 \Phi(x) = - \frac{1}{\beta^2} \bigl(D^2 G_{\beta,h}(x) - \beta \text{Id} \bigr).
\end{equation}
We know from Stein's method that
if $(W,W')$ is exchangeable and \eqref{regressioncond} is satisfied with $R=0$ we have
$$
\frac 12 \E[(W'-W)(W'-W)^t] = \Sigma \, \Lambda^t.
$$
On the one hand in the Curie-Weiss-Potts model we have $\Sigma = [D^2 G_{\beta,h}(x)]^{-1} - \beta^{-1} \text{Id}$. 
On the other hand the left hand side describes the empirical covariance structure of the single-spins,
$$
\frac 12 \E[(W_i'-W_i)(W_j'-W_j)] = \frac{1}{2n} \E \bigl(Y_{I,i}'-Y_{I,i} \bigr) \bigl(Y_{I,j}' - Y_{I,j} \bigr).
$$
Therefore with \eqref{heusingle}, heuristically
$$
\frac 12 \E[(W'-W)(W'-W)^t] \approx \frac 1n \frac{1}{\beta^2} \bigl(- D^2 G_{\beta,h}(x) + \beta \text{Id} \bigr) = 
\bigl( [D^2 G_{\beta,h}(x)]^{-1} - \beta^{-1} \text{Id} \bigr) \, \Lambda^t.
$$
If we now choose $\Lambda = \Lambda^t = \frac{1}{\beta \, n} D^2G_{\beta,h}(x)$, the
right hand identity is fulfilled.
\end{remark}

\begin{proof}[Proof of Theorem \ref{THMM}]
The proof uses the fact that the conditional joint distribution of the $(\sigma_i)_i$, 
conditioned on the event $\big\{ \frac{N}{n}\in B(x_i,\epsilon) \bigr\}$, is given by
$$P_{ \beta,h,n,\epsilon}( \sigma)=\frac{1}{Z_{ \beta,h,n,\epsilon}}\exp\left(\frac{ \beta}{2n}\sum\limits_{1\leq i\leq j\leq n}\delta_{ \sigma_i,\sigma_j}+
h\sum\limits_{i=1}^n\delta_{ \sigma_i,1}\right)\mathbf{1}_{B(x_i,\epsilon)}(N/n),
$$
where $Z_{ \beta,h,n,\epsilon}$ denotes a normalization. Thus we are able to start with any minimum point $x_0$ and follow the lines of the proof of 
Theorem \ref{THUM}. 
\end{proof}

\begin{proof}[Proof of Theorem \ref{TT}]
We will apply Theorem \ref{EL}. Obviously the density $p$ is nice. Note that the logarithmic derivative is
$\psi(t) = \frac{p'(t)}{p(t)} = - \frac{16(q-1)^4}{3} \, t^3$.
The solutions $f_g$ of the corresponding Stein equations \eqref{Steineqp} - with respect to absolutely continuous test functions $g$ and 
with respect to $g(x)=1_{\{x \leq z\}}(x)$, $z \in \R$, respectively - fulfill all boundedness assumption of Theorem \ref{EL}.
This was proven in \cite[Lemma 2.2]{Eichelsbacher/Loewe:2010}. By definition of $T$, see \eqref{defTV}, we have
$$
T = \frac{1}{(1-q) n^{3/4}} \bigl( N_1 - n x_1 - \sqrt{n} V_1 \bigl) = \frac{1}{(1-q) n^{3/4}} \biggl( \sum_{i=1}^n Y_{i,1} - n x_1 - \sqrt{n} V_1 \biggr).
$$
We make use of the choice $V \in \mathcal{M}\cap u^{\bot}$. With $V \in \mathcal{M}$ we have $\sum_{i=1}^q V_i=0$ and 
with $\langle V, u \rangle = V_1 (1-q) + \sum_{i=2}^q V_i=0$ we obtain
$$
\sum\limits_{i=2}^q V_i = -V_1 =0.
$$
Constructing an exchangeable pair $(T,T')$ is just the same as in the introduction. $T'$ is a random variable being the same as $T$ except that
we pick an index $I$ uniformly and exchange $Y_{I,1}$ with $Y_{I,1}'$ (for $I=i$ distributed according to the conditional distribution of $Y_{i,1}$ given
$(Y_{j,1})_{j \not= i}$, independently of $Y_{i,1}$).  Now we calculate $\E [T'-T | {\mathcal F}]$ with $\mathcal{F}=\sigma(\sigma_1,\ldots,\sigma_q)$.
\begin{eqnarray*}
\mathbb{E}[T'-T \mid \mathcal{F}] &=& \frac{1}{(1-q) \, n^{7/4}} \sum\limits_{i=1}^n \mathbb{E}\left[ Y_{i,1}'-Y_{i,1} \mid \mathcal{F}\right] \\
& = & \frac{1}{(1-q) \, n^{7/4}} \sum_{i=1}^n \E[ Y_{i,1}' | {\mathcal F}] - \frac 1n T - \frac{x_1}{(1-q) n^{3/4}}. 
\end{eqnarray*}
With Lemma \ref{expGD} we obtain
$$
\frac{1}{(1-q) \, n^{7/4}} \sum_{i=1}^n \E[ Y_{i,1}' | {\mathcal F}] = \frac{1}{(1-q) n^{3/4}} \bigl( m_1(\sigma) - \frac{1}{\beta_0} \frac{\partial}{\partial x_1}
G_{\beta_0, h_0} (m(\sigma)) \bigr) + \frac{1}{(1-q) n^{1/4}} R_n^{(1)}(i,h_0).
$$
Hence using $m_1(\sigma) = x_1 + \frac{(1-q) T}{n^{1/4}}$ and defining $\widetilde{R} :=\frac{1}{(q-1)n^{1/4}}R_n^{(1)}(i,h_0)$
we have
\begin{equation*}
\mathbb{E}[T'-T\mid \mathcal{F}] = - \frac{1}{(1-q)\beta_0 n^{3/4}} \frac{\partial}{\partial x_1 }G_{\beta_0,h_0}\left(x+\frac{Tu}{n^{1/4}}+\frac{V}{n^{1/2}}\right) -\widetilde R.
\end{equation*}
A quite tedious {\it fourth-order} Taylor expansion of $G_{\beta_0,h_0}$ at $x+\frac{Tu}{n^{1/4}}+\frac{V}{n^{1/2}}$ is 
affiliated in the Appendix, see \eqref{finaleins}, which leads to 
\begin{eqnarray} \label{FIN1}
\frac{\partial}{\partial x_1 }G_{\beta_0,h_0}\left(x+\frac{Tu}{n^{1/4}}+\frac{V}{n^{1/2}}\right) &=&
 -\frac{16(q-1)^4}{3 q n^{3/4}} T^3  + \mathcal{O} \left(\sum_{j=2}^q
\frac{V_j^2}{n}\right)\\
&~& + \mathcal{O} \left(\frac{T^4}{n}\right) + 
{\mathcal O} \biggl( f \bigl( V/\sqrt{n}, T/n^{1/4} \bigr) \biggr) \nonumber
\end{eqnarray}
with $f(v,t)$ given in \eqref{arbeit2}.
Hence we obtain $\mathbb{E}[T'-T\mid \mathcal{F}] = \lambda \psi(T) - R$ with
$$
\lambda: = \frac{1}{q(q-1)\beta_0 n^{3/2}}
$$
and
$$
-R:= \mathcal{O} \left(\sum_{j=2}^q
\frac{V_j^2}{n^{7/4}}\right) +\mathcal{O}\left(\frac{T^4}{n^{7/4}}\right) + 
{\mathcal O} \biggl( \frac{1}{n^{3/4}} f \bigl( V/\sqrt{n}, T/n^{1/4} \bigr) \biggr) -\widetilde R.
$$
Now $0<\lambda<1$ for all $n \in \N$ and thus we can apply Theorem \ref{EL}. The moments of $T$ and $V$ are finite, see Lemma \ref{EnMo2}. 
With $V_1=0$ we get $T =\frac{1}{(1-q) n^{1/4}} \, W_1$. 
Now we are able to compute the expressions of the bound in Theorem \ref{EL}.  
We have
$$
\E [ (T'-T)^2 | T] = \frac{1}{(1-q)^2 n^{1/2}} \E [ (W_1'-W_1)^2 | W].
$$
Reproducing the proof of Theorem \ref{THUM} we get $\mathbb{V}\left(\mathbb{E}[(W_1'-W_1)^2\mid \mathcal{F}]\right)=\mathcal{O}(n^{-5/2})$, using
$\E|W^l| \leq n^{l/4} \E|T^l| = {\mathcal O}(n^{l/4})$, $l \in \N$.
Thus
\begin{align*}
\frac{c_2}{2\lambda}\left(\mathbb{V}\left(\mathbb{E}[(T'-T)^2\mid T]\right)\right)^{1/2}=\mathcal{O}(n^{-1/4}).
\end{align*}
Moreover $\E|T'-T|^3 =\frac{1}{(1-q)^3 n^{3/4}} \frac{1}{n^{3/2}} \E|Y_I'-Y_I| = \mathcal{O}(n^{-9/4})$ and therefore
$\frac{c_3}{4 \lambda} \E|T'-T|^3 = \mathcal{O}(n^{-3/4})$. From the proof of Theorem \ref{THUM} we know that $|R_n^{(1)}(i,h_0)|=  \mathcal{O}(n^{-3/2})$, so
$\widetilde{R} = \mathcal{O}(n^{-7/4})$. Note that by \eqref{arbeit2} we see that the expectation
of ${\mathcal O} \biggl( \frac{1}{n^{3/4}} f \bigl( V/\sqrt{n}, T/n^{1/4} \bigr) \biggr)$ is of order ${\mathcal O} (n^{-2})$.
Summarizing we obtain
$\sqrt{\mathbb{E}[R^2]}=\mathcal{O}(n^{-7/4})$, hence
\begin{align*}
\frac{c_1+c_2\sqrt{\mathbb{E}[T^2]}}{\lambda}\sqrt{\mathbb{E}[R^2]}=\mathcal{O}(n^{-1/4}).
\end{align*}
Hence the $\delta$ in \eqref{boundp} is of order $\mathcal{O}(n^{-1/4})$. We obtain the same rate of convergence in the Kolmogorov distance,
using $|T'-T| \leq \frac{\text{const.}}{n^{3/4}} =: A$. The order of the first two summands in \eqref{kolall} is $\mathcal{O}(n^{-1/4})$. 
The third term in \eqref{kolall} is of order $\mathcal{O}(n^{-3/4})$ and finally
$$
\frac{3A}{2} \E |\psi(T)| \leq \frac{\text{const}}{n^{3/4}} \E|T^3| = \mathcal{O}(n^{-3/4}),
$$
which completes the proof.
\end{proof}

\begin{proof}[Proof of Theorem \ref{TV}]
Since $V_1=0$, by the continuous mapping theorem it suffices to show that the random vector $\bar{V}:= (V_2,\ldots,V_q)$ 
converges towards the $(q-1)$-dimensional centered Gaussian vector with covariance matrix
\begin{center}
$\frac{q}{2(q-1)^2(q-2)}\begin{pmatrix} q-2 & -1 & ... & ... & ...&-1\\  -1 & 
q-2 & -1 & ... & ...&-1 \\  ... &... & ... & ...&...&...\\  -1 & ... & ... & ... & -1&q-2   \end{pmatrix}$.
\end{center}
We will apply Theorem \ref{RR}. We see for any $i \geq 2$
$$
\E [V_i'-V_i \mid \mathcal{F}]  = \E[W_i' - W_i \mid \mathcal{F}] = -\frac{1}{n\sqrt{n}}\sum\limits_{j=1}^n Y_{ji}+\frac{1}{n\sqrt{n}}\sum\limits_{j=1}^n\mathbb{E}[Y_{ji}'\mid \mathcal{F}].
$$
With Lemma \ref{expGD} and $R_n^{(1)}(i, h_0)$ defined as in \eqref{R1i} we obtain
\begin{eqnarray*}
\mathbb{E}[V_i'-V_i\mid \mathcal{F}]&=& -\frac{1}{n\sqrt{n}}\sum\limits_{j=1}^nY_{ji}+\frac{1}{\sqrt{n}} m_i(\sigma) -\frac{1}{\beta_0\sqrt{n}}
\frac{\partial}{\partial x_i}G_{\beta_0,h_0}(m(\sigma))+R_n^{(1)}(i, h_0) \\
&=& - \frac{1}{\beta_0\sqrt{n}}\frac{\partial}{\partial x_i}G_{\beta_0,h_0}\left(x+\frac{T\,u}{n^{1/4}}+\frac{V}{n^{1/2}}\right)+R_n^{(1)}(i,h_0).
\end{eqnarray*}
By the {\it fourth-order} Taylor expansion of  $\frac{\partial}{\partial x_i}G_{\beta_0,h_0}\left(x+\frac{T\,u}{n^{1/4}}+\frac{V}{n^{1/2}}\right)$,
see \eqref{arbeit1}, \eqref{arbeit2}, \eqref{arbeit3} and \eqref{finalzwei} in the Appendix,  we obtain for any $i \in \{2, \ldots, q\}$
\begin{equation} \label{forheuristic}
\mathbb{E}[V_i'-V_i\mid V]= - \frac{4}{\beta_0 \, n \, q^2} \langle (1, \ldots, 1, (q^2-3q+3), 1 \ldots, 1), \bar{V} \rangle + R_i,
\end{equation}
where $(q-1)(q-2)$ is the $i$'th entry of the vector in $\R^{q-1}$ and 
$$
R_i:= {\mathcal O} \bigl(\sum_{l=1}^q \frac{V_l \, T}{n^{5/4}} \bigr) + {\mathcal O} \biggl( \frac{1}{\sqrt{n}} A_2(i, V/\sqrt{n}, T/n^{1/4}) \biggr) + 
{\mathcal O} \bigl( \frac{T^3}{n^{5/4}} \bigr) + 
{\mathcal O} \bigl( \frac{T^4}{n^{5/2}} \bigr) + R_n^{(1)}(i,h_0)
$$
(see \eqref{arbeit2} for the definition of $A_2$).
Using $\sum_{k=2, k \not= i}^q V_k = - V_i$ we get 
$$
\langle (1, \ldots, 1, (q^2-3q+3), 1 \ldots, 1), \bar{V} \rangle  = \bigl( (q-1)(q-2)\bigr) V_i.
$$
Thus the linearity condition of Theorem \ref{RR} is satisfied with $\Lambda= \frac 1n \frac{(q-2)}{q} \,\text{Id}_{q-1\times q-1}$ 
and $R=(R_2, \ldots, R_q)$. With  $q-2 >0$ for $q \geq 3$ we get the invertibility of $\Lambda$ and
$\lambda^{(i)}= \mathcal{O}(n)$. From the proof of Theorem \ref{THUM} we see that
$$
\mathbb{E}[(V_i'-V_i)(V_j'-V_j)\mid \mathcal{F}]=\mathbb{E}[(W_i'-W_i)(W_j'-W_j)\mid \mathcal{F}]=\mathcal{O}(n^{-3/2})
$$
and thus $A$ in Theorem \ref{RR} is of order $\mathcal{O}(n^{-1/2})$. Moreover $|V_i'-V_i| \leq \frac{1}{\sqrt{n}}$ for all $i$ and thus
$B$ in Theorem \ref{RR} is of order $\mathcal{O}(n^{-1/2})$. It remains to calculate $C$ in Theorem \ref{RR}. From the proof of Theorem \ref{THUM}
we know that $\E (R_n^{(1)}(i,h_0)) = {\mathcal O}(n^{-3/2})$. Using bounded moments of $V_i$ and $T$  we obtain that the expectations
of the first and third term of $R_i$ are ${\mathcal O}(n^{-5/4})$. With \eqref{arbeit2} we further get that the expectations of the
second and fourth term of $R_i$ are ${\mathcal O}(n^{-3/2})$ and ${\mathcal O}(n^{-5/2})$, respectively. By the Cauchy-Schwartz inequality 
we get $\sqrt{\mathbb{E}[R_i^2]}=\mathcal{O}(n^{-5/4})$, hence $C=\mathcal{O}(n^{-1/4})$, which completes the proof.
\end{proof}

\begin{remark}
In Remark \ref{heuristic} we gave a heuristic, that the matrix $\Lambda$ in the regression condition \eqref{regressioncond}
should be expected to be $\frac{1}{\beta n} D^2G_{\beta,h}(x)$. Our heuristic is confirmed in the proof
of Theorem \ref{TV}, since we can rewrite \eqref{forheuristic} as
$$
\mathbb{E}[\bar{V}'- \bar{V} \mid \bar{V}]= - \frac{1}{\beta_0 \, n} D^2 G_{q-1}(x) \, \bar{V} + R,
$$
where $D^2 G_{q-1}(x)$ denotes the upper $(q-1) \times (q-1)$ part of $D^2 G_{\beta_0, h_0}(x)$. 
The limiting covariance matrix $\Sigma$ of $\bar{V}$ is given by $[D^2 G_{q-1}(x)]^{-1} - \beta_0^{-1} \text{Id}_{q-1}$.
\end{remark}

\begin{remark}
These rates of convergence remain still valid if we change our probability measure $P_{ \beta,h,n}$ to
$$
P_{ \beta, h, n}( \sigma)=\frac{1}{Z_{ \beta,h,n}}\exp\left(\frac{ \beta}{2n}\sum\limits_{1\leq i\leq j\leq n}
\delta_{ \sigma_i,\sigma_j}+\sum\limits_{j=1}^n\sum\limits_{i=1}^q\delta_{ \sigma_j,i}h_i\right)
$$
for $\beta \in \mathbb{R}^+$ and $h \in \mathbb{R}^q$. This measure and the characteristics of the corresponding function
$$
G_{\beta,h}(u):=\frac{\beta}{2} \langle u,u \rangle - \log \left(\sum\limits_{i=1}^q\exp\left(\beta u_i+h_i\right)\right)
$$
were studied in \cite{Wang:1994}. First of all we note that \eqref{PBH} is the same model with $h_1=h$ and $h_i=0$ for $i\in \{2,\ldots,q\}$. 
Based on the results of \cite{Ellis/Wang:1990} and \cite{Wang:1994} and following the same procedures as above our results can 
easily be extended to the case that $h_i\neq 0$ for $i\in \{2,\ldots,q\}$. We omit these extensions here.
\end{remark}

\section{Appendix}

For the proofs of Theorems \ref{THUM}, \ref{THUM2} and \ref{THMM} and for Lemma \ref{wangimprovement} and Lemma \ref{EnMo} we need a multivariate 
{\it second-order Taylor-expansion} of $G_{\beta,h}$ defined in \eqref{Gfunktion}, for every $(\beta,h) \not= (\beta_0, h_0)$. 
Let us denote by $D^2G_{\beta,h}(x)$ the Hessian matrix
$\{ \partial^2 G_{\beta,h}(x) / \partial x_i \partial x_j, i,j = 1, \ldots, q\}$ of $G_{\beta,h}$ at $x$. We obtain
\begin{eqnarray} \label{taylor1}
G_{\beta,h}(u)& = &  G_{\beta,h}(x)+\sum\limits_{k=1}^q\frac{\partial}{\partial u_k}G_{\beta,h}(x)(u_k-x_{k})+
\frac{1}{2} \langle (u-x),D^2G_{\beta,h}(x)\cdot (u-x) \rangle \\ \nonumber
& + & \frac{1}{6}\sum\limits_{t,k,j=1}^q \widetilde R_{t,k,j}(u_t-x_{t})(u_k-x_{k})(u_j-x_{j})
\end{eqnarray}
with
$| \widetilde R_{t,k,j} | \leq \parallel \frac{\partial^3}{\partial u_k\partial u_t\partial u_j}G_{\beta,h} \parallel$.
For any fixed $m\in \{1,\ldots,q\}$ and any $x, u \in \mathbb{R}^q$ it follows that
\begin{eqnarray} \label{taylorsecond}
\frac{\partial}{\partial u_m}G_{\beta,h}(u) & = & \frac{\partial}{\partial u_m}G_{\beta,h}(x) + \frac{\partial^2}{\partial^2 u_m} G_{\beta,h}(x)(u_m-x_{m})\\
& + & \sum\limits_{k\neq m}\frac{\partial^2}{\partial u_k\partial u_m}G_{\beta,h}(x)(u_k-x_{k}) + \sum\limits_{k=1}^q\mathcal{O}((u_m-x_{m})(u_k-x_{k})) \nonumber \\
& +& \sum\limits_{k,t \neq m}^q\mathcal{O}((u_k-x_{k})(u_t-x_{t})). \nonumber
\end{eqnarray}

If $x$ is a global minimum point of $G_{\beta,h}$ we are able to calculate the Hessian as follows.

\begin{lemma} \label{hessiancal}
The Hessian $D^2 G_{\beta,h}(x_0)$ at an arbitrary global minimum point $x_0$ looks like
$$
\frac{\partial^2}{\partial^2 u_1} G_{\beta,h}  (x_0) = \beta - \beta^2 (q-1) x_{0,1} \, x_{0,q}, \quad
\frac{\partial^2}{\partial u_1\partial u_q} G_{\beta,h} (x_0) = \beta^2 x_{0,1}\, x_{0,q},
$$ 
and
$$
\frac{\partial^2}{\partial^2 u_q} G_{\beta,h} (x_0) = \beta - \beta^2 (x_{0,1} \, x_{0,q} + (q-2) x_{0,q}^2), \quad
\frac{\partial^2}{\partial u_2 \partial u_q}G_{\beta,h} (x_0) = \beta^2 x_{0,q}^2.
$$
\end{lemma}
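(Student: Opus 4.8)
The plan is to differentiate the first‑order formula for $G_{\beta,h}$ from Lemma \ref{expGD} once more and then to use that at a global minimum point the normalized Gibbs weights coincide with the coordinates of the minimizer. First I would abbreviate, for $u\in\R^q$,
\[
p_m(u):=\frac{\exp(\beta u_m+h\delta_{m,1})}{\sum_{k=1}^q\exp(\beta u_k+h\delta_{k,1})},
\]
so that Lemma \ref{expGD} reads $\frac{\partial}{\partial u_m}G_{\beta,h}(u)=\beta u_m-\beta\,p_m(u)$. A direct computation (the standard softmax derivative) gives $\frac{\partial}{\partial u_n}p_m(u)=\beta\bigl(p_m(u)\delta_{m,n}-p_m(u)p_n(u)\bigr)$, and hence
\[
\frac{\partial^2}{\partial u_m\partial u_n}G_{\beta,h}(u)=\beta\delta_{m,n}-\beta^2\bigl(p_m(u)\delta_{m,n}-p_m(u)p_n(u)\bigr).
\]
In particular $\frac{\partial^2}{\partial^2u_m}G_{\beta,h}(u)=\beta-\beta^2 p_m(u)\bigl(1-p_m(u)\bigr)$ and, for $m\neq n$, $\frac{\partial^2}{\partial u_m\partial u_n}G_{\beta,h}(u)=\beta^2 p_m(u)p_n(u)$.

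Next I would specialize to $u=x_0$. Since $G_{\beta,h}$ is smooth and coercive on $\R^q$, its global minimum point $x_0$ is an unconstrained critical point, so $\nabla G_{\beta,h}(x_0)=0$; by Lemma \ref{expGD} this forces $p_m(x_0)=x_{0,m}$ for every $m$. Substituting yields $\frac{\partial^2}{\partial^2u_m}G_{\beta,h}(x_0)=\beta-\beta^2 x_{0,m}(1-x_{0,m})$ and $\frac{\partial^2}{\partial u_m\partial u_n}G_{\beta,h}(x_0)=\beta^2 x_{0,m}x_{0,n}$ for $m\neq n$.

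Finally it remains to rewrite the factors $1-x_{0,1}$ and $1-x_{0,q}$ in the stated form. By Corollary \ref{Mi} the minimizer $x_0$ lies in $\mathcal H$, hence $\sum_{i=1}^q x_{0,i}=1$, and by Proposition \ref{ChMi}(1) the minimal coordinate is attained at least $q-1$ times, so with the labeling used in the statement $x_{0,2}=\cdots=x_{0,q}$. Therefore $1-x_{0,1}=(q-1)x_{0,q}$ and $1-x_{0,q}=x_{0,1}+(q-2)x_{0,q}$, whence $x_{0,1}(1-x_{0,1})=(q-1)x_{0,1}x_{0,q}$ and $x_{0,q}(1-x_{0,q})=x_{0,1}x_{0,q}+(q-2)x_{0,q}^2$; combined with $x_{0,2}=x_{0,q}$ for the $\partial^2_{u_2u_q}$ entry, this gives exactly the four claimed identities. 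There is no genuine obstacle here — the computation is routine; the only points needing care are the identification $p_m(x_0)=x_{0,m}$ from the vanishing gradient and the appeal to Proposition \ref{ChMi} and $x_0\in\mathcal H$ to reduce the distinct coordinates of $x_0$ to $x_{0,1}$ and $x_{0,q}$.
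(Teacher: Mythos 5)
Your proof is correct and follows essentially the same route as the paper's: both compute the Hessian of $G_{\beta,h}$ directly, use the stationarity $\nabla G_{\beta,h}(x_0)=0$ (via Lemma \ref{expGD}) to identify the Gibbs weights with the coordinates of $x_0$, and invoke Proposition \ref{ChMi} together with $\sum_i x_{0,i}=1$ to reduce everything to $x_{0,1}$ and $x_{0,q}$. Your only cosmetic difference is packaging the computation through the general softmax-Hessian identity rather than substituting the explicit two-exponential form at the outset, which changes nothing of substance.
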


\begin{proof}
According to Proposition \ref{ChMi} we know that for any minimizer $x_0$ of $G_{\beta,h}$ we have
$x_{0,i} = x_{0,k}$ for all $i,k \in \{2,\ldots,q\}$ and  $x_{0,1} \geq x_{0,k}$  for all $k\in\{2,\ldots,q\}$
and $\sum\limits_{i=1}^qx_{0,i}=1$. Notice that the equation $\nabla G_{\beta,0}(x_0)=0$ implies 
\begin{align}
x_{0,1}&=\frac{\exp(\beta x_{0,1}+ h \delta_{1,1})}{\exp(\beta x_{0,1} +h )+(q-1)\exp(\beta x_{0,q})},\nonumber\\
x_{0,q}&=\frac{\exp(\beta x_{0,q})}{\exp(\beta x_{0,1}+ h )+(q-1)\exp(\beta x_{0,q})}.\nonumber
\end{align} 
Now we can calculate
\begin{eqnarray*}
\frac{\partial^2}{\partial^2 u_1} G_{\beta,h}(x_0) 
& = & \beta - \beta^2 (q-1) \frac{\exp(\beta (x_{0,1} + x_{0,q}) + h)}{(\exp(\beta (x_{0,1}) + h) +(q-1) \exp(\beta x_{0,q}))^2} \\
& = & \beta - \beta^2 (q-1) x_{0,1} \, x_{0,q}
\end{eqnarray*}
and
$$
\frac{\partial^2}{\partial u_1\partial u_q} G_{\beta,h} (x_0) = 
\beta^2  \frac{\exp(\beta (x_{0,1}+x_{0,q})+h)}{(\exp(\beta x_{0,1}+h)+(q-1)\exp(\beta x_{0,q}))^2}= \beta^2  \, x_{0,1} \, x_{0,q}.
$$
Moreover
\begin{eqnarray*}
\frac{\partial^2}{\partial^2 u_q} G_{\beta,h} (x_0) & = & \beta -\beta^2
\frac{\exp(\beta x_{0,q} +h)(\exp(\beta x_{0,1}+h)+(q-2)\exp(\beta x_{0,q}))}{(\exp(\beta x_{0,1}+h)+(q-1)\exp(\beta x_{0,q}))^2} \\
& =& \beta-\beta^2 (x_{0,1}\, x_{0,q}+(q-2) x_{0,q}^2)
\end{eqnarray*}
and
$$
\frac{\partial^2}{\partial u_2 \partial u_q}G_{\beta,h} (x_0)=
\beta^2 \frac{\exp(2\beta x_{0,q})}{(\exp(\beta x_{0,1}+h)+(q-1)\exp(\beta x_{0,q}))^2}= \beta^2 \, x_{0,q}^2.
$$
\end{proof}

With Lemma \ref{hessiancal} we get, that the Hessian of $G_{\beta,h}$ at a global minimum point is a matrix of type \eqref{Matrix}.
The following Lemma is some Linear Algebra for a matrix of the form \eqref{Matrix}.

\begin{lemma} \label{DetLe}
For any $a,b,c,d\in\mathbb{R}$ consider the following matrix
\begin{equation} 
\Lambda:=\begin{pmatrix} a & b & ... & ... & ... & ...&b \\ b & d & c & ... & ... & ...&c\\ b & c & d & c & ... & ...&c 
\\ ... & ... &... & ... & ...&...&...\\ b & c & ... & ... & ... & c&d   \end{pmatrix}\in\mathbb{R}^{q\times q}. \label{Matrix}
\end{equation}
Then $\det(\Lambda)=(d-c)^{q-2}(a(d+(q-2)c)-(q-1)b^2)$. 
\end{lemma}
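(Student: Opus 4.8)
The plan is to read the determinant off from the block structure of $\Lambda$ by exhibiting an explicit decomposition of $\R^{q}$ into two $\Lambda$-invariant subspaces. Write
\[
\Lambda=\begin{pmatrix} a & b\,\mathbf 1^{t}\\ b\,\mathbf 1 & M\end{pmatrix},
\]
where $\mathbf 1=(1,\dots,1)^{t}\in\R^{q-1}$ and $M$ is the lower right $(q-1)\times(q-1)$ block of $\Lambda$ (diagonal entries $d$, all other entries $c$), so that $M=(d-c)\,\mathrm{Id}_{q-1}+c\,\mathbf 1\mathbf 1^{t}$. First I would check that for every $v\in\R^{q-1}$ with $\mathbf 1^{t}v=0$ the vector $(0,v)^{t}$ is an eigenvector of $\Lambda$ with eigenvalue $d-c$, since $b\,\mathbf 1^{t}v=0$ and $Mv=(d-c)v$. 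This gives a $(q-2)$-dimensional $\Lambda$-invariant subspace $U_{1}$ on which $\Lambda$ acts as multiplication by $d-c$.

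Next I would look at $U_{2}:=\operatorname{span}\{e_{1},f\}$ with $e_{1}=(1,0,\dots,0)^{t}$ and $f=(0,\mathbf 1)^{t}$. Computing the first column, respectively the sum of the last $q-1$ columns, of $\Lambda$ gives $\Lambda e_{1}=a\,e_{1}+b\,f$ and $\Lambda f=(q-1)b\,e_{1}+\bigl(d+(q-2)c\bigr)f$, so $U_{2}$ is $\Lambda$-invariant and the matrix of $\Lambda|_{U_{2}}$ in the basis $\{e_{1},f\}$ is $\left(\begin{smallmatrix} a & (q-1)b\\ b & d+(q-2)c\end{smallmatrix}\right)$, whose determinant is $a\bigl(d+(q-2)c\bigr)-(q-1)b^{2}$. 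A general vector of $U_{2}$ has the form $(\alpha,\beta,\dots,\beta)$, and it lies in $U_{1}$ only if $\alpha=0$ and $(q-1)\beta=0$, i.e.\ only if it vanishes (this is where $q>2$ enters); hence $\R^{q}=U_{1}\oplus U_{2}$ with both summands $\Lambda$-invariant. Passing to a basis adapted to this splitting makes $\Lambda$ block diagonal, so $\det\Lambda=\det(\Lambda|_{U_{1}})\cdot\det(\Lambda|_{U_{2}})=(d-c)^{q-2}\bigl(a(d+(q-2)c)-(q-1)b^{2}\bigr)$, which is the assertion.

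There is no genuine obstacle here; the whole argument is elementary linear algebra. The only point that needs a line of care is the verification that $U_{1}$ and $U_{2}$ together span $\R^{q}$, so that the determinant is really the product over the two blocks and not merely a divisor of it. One can sidestep even this bookkeeping by the equivalent elementary-matrix manipulation: subtract the second row of $\Lambda$ from rows $3,\dots,q$, then add columns $3,\dots,q$ to column $2$; the resulting matrix is block upper triangular with a $2\times2$ block $\left(\begin{smallmatrix} a & (q-1)b\\ b & d+(q-2)c\end{smallmatrix}\right)$ and a diagonal $(q-2)\times(q-2)$ block all of whose diagonal entries equal $d-c$, and the factorisation drops out. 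In particular the formula shows that the Hessian-type matrices $\Lambda$ occurring in the proofs of Theorems \ref{THUM} and \ref{TV} are invertible precisely when $d\neq c$ and $a(d+(q-2)c)\neq(q-1)b^{2}$.
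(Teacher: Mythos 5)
Your argument is correct and complete: the verification that $(0,v)^t$ with $\mathbf 1^t v=0$ is an eigenvector with eigenvalue $d-c$, the computation of $\Lambda$ on $\operatorname{span}\{e_1,(0,\mathbf 1)^t\}$ giving the $2\times 2$ matrix $\bigl(\begin{smallmatrix} a & (q-1)b\\ b & d+(q-2)c\end{smallmatrix}\bigr)$, and the dimension count showing $\R^q=U_1\oplus U_2$ all check out, as does your alternative route by subtracting row $2$ from rows $3,\dots,q$ and adding columns $3,\dots,q$ to column $2$, which indeed produces a block upper triangular matrix with the same $2\times 2$ block and $d-c$ repeated $q-2$ times on the diagonal. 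The paper's own proof is a one-line appeal to Laplace (cofactor) expansion with no details, so your route is genuinely different: instead of expanding the determinant, you exploit the symmetry of $\Lambda$ under permutations of the last $q-1$ coordinates to split $\R^q$ into two invariant subspaces and read the determinant off blockwise. What your approach buys is more than the formula itself: it exhibits $d-c$ as an eigenvalue of multiplicity at least $q-2$ with explicit eigenvectors and isolates the two remaining eigenvalues inside the $2\times 2$ block, which is exactly the structural information used elsewhere in the paper (e.g.\ positive definiteness of the Hessian in Lemma \ref{wangimprovement} and the covariance computations in the proofs of Theorems \ref{THUM} and \ref{TV}); the Laplace expansion gives the formula with less machinery but no such insight. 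One microscopic quibble: the intersection argument only needs $q-1\neq 0$, so nothing special happens at $q>2$ versus $q=2$ (where $U_1=\{0\}$ and the formula degenerates correctly to $ad-b^2$); your parenthetical "this is where $q>2$ enters" slightly overstates the role of that hypothesis.
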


\begin{proof} 
We applied the formula due to Laplace.
\end{proof}

\begin{remark} \label{extrem}
At the extremity $(\beta_0,h_0)= (4 \frac{q-1}{q}, \log(q-1) - 2 \frac{q-2}{q})$ of the critical line, 
$x=(1/2, 1/2(q-1), \ldots, 1/2(q-1))$ is the unique minimum point of $G_{\beta_0,h_0}$. 
Notice that
$$
\exp(\beta_0\cdot x_1+h_0) = (q-1) \exp(2/q), \quad \exp(\beta_0\cdot x_q) = \exp(2/q).
$$
With Lemma \ref{hessiancal} we obtain 
$$
\frac{\partial^2 G_{\beta_0,h_0}}{\partial^2 x_1}(x) = \frac{\partial^2 G_{\beta_0,h_0}}{\partial x_1\partial x_q}(x) = \frac{4(q-1)}{q^2}
$$
and $\frac{\partial^2 G_{\beta_0,h_0}}{\partial^2 x_q}(x)=\frac{4(q^2-3q+3)}{q^2}$ and
$\frac{\partial^2 G_{\beta_0,h_0}}{\partial x_q\partial x_{q-1}}(x)=\frac{4}{q^2}$. Thus $a=b$ in \eqref{Matrix}.
\end{remark}

For the proofs of the results at {\it criticality}, Theorems \ref{TT}, \ref{TV} and Lemma \ref{EnMo2}, we need a multivariate 
{\it fourth-order Taylor-expansion} of $G_{\beta_0,h_0}$ (defined in \eqref{Gfunktion}). We fix the notation $G := G_{\beta_0,h_0}$
for the following calculations. We know that $x=(1/2, 1/2(q-1), \ldots, 1/2(q-1))$ is the unique
minimum point of $G$. Let $u=(1-q, 1, \ldots, 1) \in {\mathcal M} \subset \R^q$, $v \in {\mathcal M} \cap u^{\bot}$ and $t \in \R$.
For any $p \in \N$ and $z \in \R^q$ let us fix the notation
$$
R_{i_1, \ldots, i_p} (z) := \bigl( \frac{\partial^p G}{\partial x_{i_1} \cdots \partial x_{i_p}} \bigr) (z).
$$
A {\it second-order} Taylor-expansion yields
$$
G(x+tu+v) = G(x+tu) + \frac 12 \langle v, (D^2G)(x+tu) \, v \rangle + \frac 16 \sum_{j,k,l=1}^q R_{j,k,l}(x+tu+\gamma v) \, v_j v_k v_l
$$
for some $\gamma \in (0,1)$, since  $\langle (\nabla G)(x+tu), v \rangle=0$, the last $q-1$ coordinates of $x+tu$ are equal and with Lemma \ref{expGD} 
the last $q-1$ coordinates of the gradient $(\nabla G)(x+tu)$ are equal, and hence it is orthogonal to $v$. 
A {\it fourth-order} Taylor-expansion for $G(x+tu)$ yields
\begin{equation} \label{huebsch}
G(x+tu) = G(x) + \frac{1}{24} \sum_{j,k,l,m=1}^q R_{j,k,l,m}(x+ \widetilde{\gamma} t u) \, t^4 \, u_j u_k u_l u_m
\end{equation}
for some $\widetilde{\gamma} \in (0,1)$. To see \eqref{huebsch} notice that the first-order term is zero since $x$ is a global minimizer of $G$. 
The second-order term is zero, since we know from Lemma 
\ref{wangimprovement} that $D^2G_{\beta,h}(x)$ is positive definite if and only if $(\beta,h) \not= (\beta_0,h_0)$. Hence the third-order term is zero yielding
the identity \eqref{huebsch}. Summarizing we obtain
\begin{eqnarray} \label{4thorder}
G(x+tu+v) & = &  G(x) + \frac 12 \langle v, (D^2G)(x+tu) \, v \rangle \nonumber +  
\frac 16 \sum_{j,k,l=1}^q R_{j,k,l}(x+tu+ \gamma v) \, v_j v_k v_l \\ &+&  \frac{1}{24} \sum_{j,k,l,m=1}^q R_{j,k,l,m}(x+ \widetilde{\gamma} tu) \, t^4 \, u_j u_k u_l u_m.
\end{eqnarray}
With $y := x+tu+v$ we will calculate $\frac{\partial}{\partial y_i} G(y)$ for $i \in \{1, \ldots, q\}$ using \eqref{4thorder}. The derivative of the 
first summand in \eqref{4thorder} is zero since $x$ is the global minimizer of $G$. With
\begin{eqnarray*}
\frac 12 \langle v, (D^2G)(x+tu) \, v \rangle & = &  \frac 12 R_{i,i}(x+tu)(y_i-x_i-t u_i)^2 \\
& + &  \sum_{k \not=i} R_{i,k}(x+tu)(y_k-x_k-tu_k)(y_i-x_i-tu_i) \\
& + &  \sum_{k,l \not=i} R_{l,k}(x+tu)(y_k-x_k-tu_k)(y_l-x_l-tu_l)
\end{eqnarray*}
we obtain
$$
A_1(i) := \frac{\partial}{\partial y_i} \bigl( \frac 12 \langle v, (D^2G)(x+tu) \, v \rangle \bigr) = R_{i,i}(x+tu) v_i + \sum_{k \not= i} R_{i,k}(x+tu) v_k.
$$
With Lemma \ref{hessiancal} we obtain $R_{1,2} = R_{1,3}=\cdots =R_{1,q}$ and since $v \in u^{\bot}$ we have $A_1(1)=0$.
With a second-order Taylor expansion for $R_{i,k}(x+tu)$ we get for $t$ small
\begin{equation*} 
A_1(i) = \langle R_{i, \cdot}, v \rangle + {\mathcal O}_q \bigl( \sum_{l=1}^q v_l \, t \bigr).
\end{equation*}
Here ${\mathcal O}_q$ is the notation that the constant does depend on $q$. This is because all $R_{i_1, \ldots, i_p}(x)$ only depend on $q$, since
$x$ only depends on $q$. The second partial derivatives of $G$ were listed in Remark \ref{extrem}, hence we end up with
\begin{equation} \label{arbeit1}
A_1(1)=0, \quad A_1(i) = \frac{4(q^3-3q+3)}{q^2} v_i + \frac{4}{q^2} \sum_{k=2}^q v_k + {\mathcal O}_q \bigl( \sum_{l=1}^q v_l \, t \bigr), \,\, i \geq 2
\end{equation}
for small $t$. The last formula can even be simplified since $\sum_{k=2}^q v_k =0$ using $v \in u^{\bot}$. For reasons of application we will not use this simplification.
The partial derivative $\frac{\partial}{\partial y_i}$ of the third term in \eqref{4thorder} is
$$
A_2(i) := \frac 12 R_{i,i,i}(x+tu+\gamma v) v_i^2 + \sum_{k \not=i}^q R_{i,i,k}(x+tu+\gamma v) \, v_i v_k + \frac 12 \sum_{j,k \not= i} R_{i,j,k}(x+tu + \gamma v) \, v_k v_j.
$$
Using Taylor for $R_{i,j,k}(x+tu + \gamma v)$ we obtain for small $t$ and small $v$ 
\begin{equation} \label{arbeit2}
A_2(i) := A_2(i,v,t) := \frac 12 R_{i,i,i}(x) v_i^2 + \sum_{k \not=i}^q R_{i,i,k}(x) \, v_i v_k + \frac 12 \sum_{j,k \not= i}^q R_{i,j,k}(x) \, v_k v_j + {\mathcal O}_q( f(v,t))
\end{equation}
with 
$$
{\mathcal O}_q (f(v,t)) = {\mathcal O}_q \biggl( (t + \sum_{k =2}^q c_k(q) v_k) \bigl( v_i^2 +  v_i \sum_{k \not= i} v_k + \sum_{k,j \not= i} v_k v_j \bigr) \biggr). 
$$
Here $c_k(q)$ is denoted (a constant depending on $q$) just to see that the relation $\sum_{k=2}^q v_k=0$ cannot be applied. 
In our application the order (in $n$) of the $v_k$'s will not depend on $k$, and $v_2$ will be smaller in order than $t$.
In this situation we have ${\mathcal O}_q (f(v,t)) =  {\mathcal O}_q (t \, v_2^2)$.
We will calculate $A_2(1)$ with the help of the third derivatives which are $R_{1,1,1}(x) = R_{1,1,k}(x)=0$
and
$$
R_{1,j,j}(x) =\frac{16(q-1)(q-2)}{q^3}, \quad R_{1,j,k}(x)= -\frac{16(q-1)}{q^3}.
$$
Therefore the first two summands in \eqref{arbeit2} are zero and the third term is, using $\sum_{k=2}^q v_k=0$, 
\begin{eqnarray} \label{arbeit2fall1}
\frac 12 \sum_{j,k=2}^q R_{1,j,k}(x) v_j v_k & =& \frac 12 \sum_{j=2}^q \frac{16(q-1)(q-2)}{q^3} \, v_j^2 - \frac{1}{2}\sum\limits_{j,k=2, j\neq k}^q
\frac{16(q-1)}{q^3}\, v_j v_k \nonumber \\
&=& \frac{8(q-1)}{q^3}\sum\limits_{j=2}^q v_j \bigl( (q-1)v_j - v_j - \sum\limits_{k=2, k\neq j}^q v_k\bigr) \\
&=& \frac{8(q-1)^2}{q^3} \sum\limits_{j=2}^q v_j^2. \nonumber
\end{eqnarray}
Finally, the partial derivative $\frac{\partial}{\partial y_i}$ of the fourth term in \eqref{4thorder} is
\begin{eqnarray} \label{arbeit3}
A_3(i) := A_3(i,t) & := & \frac 16 R_{i,i,i,i}(x) t^3 u_i^3 + \frac 12 \sum_{k \not= i} R_{i,i,i,k}(x) t^3 u_i^2 u_k + \frac 12 \sum_{k,j \not= i} R_{i,i,j,k}(x) t^3 u_i u_k u_j \nonumber \\
& + & \frac 16 \sum_{j,k,l \not= i} R_{i,j,k,l}(x) t^3 u_j u_k u_l + {\mathcal O}_q(t^4),
\end{eqnarray}
where we applied a second-order Taylor expansion for $ R_{i,j,k,l}(x + \widetilde{\gamma} t u)$. Again we calculate $A_3(1)$, using
the fourth derivatives
$$
R_{1,1,1,1}(x) = \frac{32(q-1)^4}{q^4}, R_{1,1,1,k}(x)= \frac{32(q-1)^3}{q^4},\quad
R_{1,1,k,k}(x) = R_{1,1,j,k}(x)=  \frac{32(q-1)^2}{q^4}
$$
and 
$$
R_{1,k,k,k}(x) =\frac{32(q-1)(2q^2-10q+11)}{q^4}, \quad R_{1,j,j,k}(x) =-\frac{32(q-1)(2q-5)}{q^4}
$$
and $R_{1,j,k,l} =\frac{96(q-1)}{q^4}$.
We obtain
$$
\frac 16 R_{1,1,1,1}(x) t^3 (1-q)^3 = - \frac{16(q-1)^7 t^3}{3 q^4}, \quad \frac 12 \sum_{k \not= i} R_{1,1,1,k}(x) t^3 (1-q)^2  = - \frac{16(q-1)^6 t^3}{q^4},
$$
and 
$$
\frac 12 \sum_{k,j \not= i} R_{1,1,j,k}(x) t^3 (1-q)  = - \frac{16(q-1)^5 t^3}{q^4}.
$$
Moreover we have
\begin{eqnarray*}
\frac 16 \sum_{j,k,l \not= i} R_{1,j,k,l}(x) t^3 & = & \frac{16(q-1)^2(2q^2-10q+11) t^3}{3 q^4} - \frac{16 (q-1)^2(q-2)(2q-5) t^3}{q^4} \\
& & \hspace{1cm} + \frac{16(q-1)^2(q-2)(q-3) t^3}{q^4}.
\end{eqnarray*}
Hence
\begin{equation} \label{arbeit3fall1}
A_3(1) = - \frac{16(q-1)^4}{3q} t^3 + {\mathcal O}_q(t^4).
\end{equation}
We summarize that the first partial derivative  of $G(y)$ in \eqref{4thorder} satisfies
\begin{equation} \label{finaleins}
\frac{\partial}{\partial y_1} G (x+tu+v) = \frac{8(q-1)^2}{q^3} \sum\limits_{j=2}^q v_j^2  - \frac{16(q-1)^4}{3q} t^3 +{\mathcal O}_q\bigl( f(v,t) \bigr) 
+ {\mathcal O}_q(t^4),
\end{equation}
using the notation of \eqref{arbeit2}. The $i$'th partial derivative for $i \in \{2, \ldots,q \}$ is given by
\begin{equation} \label{finalzwei}
\frac{\partial}{\partial y_i} G (x+tu+v) = A_1(i) + A_2(i,v,t) + A_3(i,t)
\end{equation}
with $A_j(i)$ defined in \eqref{arbeit1}, \eqref{arbeit2} and \eqref{arbeit3}.

\begin{proof}[Proof of Lemma \ref{wangimprovement}]
For the proof we will use the following alternative parametrization of the minimum points of $G_{\beta,h}$ given by permutations of
$$
x_s=\left(\frac{1+(q-1)s(\beta,h)}{q},\frac{1-s(\beta,h)}{q},\cdots,\frac{1-s(\beta,h)}{q}\right), \quad s(\beta,h)\in [0,1].
$$
It is important to notice, see for example \cite{Blanchard:2008}, that $s(\beta,h)$ is positive, well-defined and strictly increasing 
in $\beta$ on an open interval containing $[\beta_c,\infty)$ and that $s(\beta_c,0)=(q-2)/(q-1)$ is a global minimum.
The Lemma follows once we proof that $\beta q x_{s,1} x_{s,q}-1<0$ and $\beta x_{s,q}-1<0$. The second inequality follows directly from 
Proposition \ref{ChMi}. To prove  $\beta q x_{s,1} x_{s,q}-1<0$, we first consider the case $h=h_0$.
First of all we note that the minima are the solutions of
$$
f(s(\beta,h_0))=\log(1+(q-1)s(\beta,h_0))-\log(1-s(\beta,h_0))-\beta s(\beta,h_0)-h_0=0.
$$
If $\beta<\beta_0$ we have that $\partial f(s(\beta,h_0))/\partial s>0$. Rearranging this equality and using the parametrization of $x_s$ yields the result.
If $\beta>\beta_0$ we use $\nabla G_{\beta,h_0}(x_s)=0$ to obtain
$$
\log(x_{s,1})-\beta x_{s,1}-h_0=\log(x_{s,q})-\beta x_{s,q}.
$$
This equation yield 
$$
\beta q x_{s,1}x_{s,q}-1 =\left(\frac{\log(x_{s,1})-\log(x_{s,q})-h_0}{x_{s,1}-x_{s,q}}\right)qx_{s,1}x_{s,q}-1
=\left(\log\left(\frac{x_{s,1}}{x_{s,q}}\right)-h_0\right)q\frac{x_{s,1}x_{s,q}}{x_{s,1}-x_{s,q}}-1.
$$
Using the fact that $x_{s,1}+(q-1)x_{s,q}=1$ we obtain
\begin{eqnarray*}
\beta q x_{s,1}x_{s,q}-1 &= & \left(\log\left(\frac{(q-1)x_{s,1}}{1-x_{s,1}}\right)-h_0\right)q\frac{x_{s,1}(1-x_{s,1})}{(q-1)x_{s,1}-(1-x_{s,1})}-1\nonumber\\\nonumber\\
&=&\left(\log\left(\frac{(q-1)x_{s,1}}{1-x_{s,1}}\right)-h_0\right)q\frac{x_{s,1}(1-x_{s,1})}{qx_{s,1}-1}-1 =q\frac{x_{s,1}(1-x_{s,1})}{qx_{s,1}-1}f_{h_0}(x_{s,1}),\nonumber
\end{eqnarray*}
where
$$
f_h(x_{s,1}):=\log\left(\frac{(q-1)x_{s,1}}{1-x_{s,1}}\right)-h-\frac{qx_{s,1}-1}{qx_{s,1}(1-x_{s,1})}.
$$
For $\beta_0$ the global minimizer is given by $y=(1/2,1/2(q-1),\ldots,1/2(q-1))$ and it follows easily that
$f_{h_0}\left(\frac{1}{2}\right)=0$.
Additionally $\frac{\partial f_{h_0}}{\partial x}(x)<0$ for $x \in[2^{-1},1)$.
Thus we obtain that $f_{h_0}(x_{s,1})<0$ for $x_{s,1}\in[2^{-1},1)$, and this is equivalent to $\beta q x_{s,1}x_{s,q}-1<0$.

Now we consider the case $h\neq h_0$. If $\beta<\beta_0$, the proof is identical to the case of $\beta<\beta_0$ for $h=h_0$.
If $\beta\geq\beta_0$ we have
$$
\beta q x_{s,1}x_{s,q}-1=q\frac{x_{s,1}(1-x_{s,1})}{qx_{s,1}-1}f_{h}(x_{s,1}).
$$
For $\beta\geq\beta_0$ we know that $1>s(\beta,h)\geq s(\beta_0,h)\geq s(\beta_c,0)=(q-2)/(q-1)$.
The rest of the proof is identical to the case (ii) of the proof of Proposition 2.2 in \cite{Ellis/Wang:1990}. 
\end{proof}

\noindent
{\bf Acknowlegement:} We would like to thank two anonymous referees for helpful comments.


\providecommand{\MRhref}[2]{%
  \href{http://www.ams.org/mathscinet-getitem?mr=#1}{#2}
}
\providecommand{\href}[2]{#2}



\newcommand{\SortNoop}[1]{}\def\cprime{$'$} \def\cprime{$'$}
  \def\polhk#1{\setbox0=\hbox{#1}{\ooalign{\hidewidth
  \lower1.5ex\hbox{`}\hidewidth\crcr\unhbox0}}}
\providecommand{\bysame}{\leavevmode\hbox to3em{\hrulefill}\thinspace}
\providecommand{\MR}{\relax\ifhmode\unskip\space\fi MR }
\providecommand{\MRhref}[2]{%
  \href{http://www.ams.org/mathscinet-getitem?mr=#1}{#2}
}
\providecommand{\href}[2]{#2}

\end{document}